\newtheorem{theorem}{Theorem}[section]
\newtheorem{lemma}[theorem]{Lemma}
\newtheorem{proposition}[theorem]{Proposition}
\newtheorem{corollary}[theorem]{Corollary}
\newtheorem{definition}[theorem]{Definition}
\newtheorem{question}[theorem]{Question}
\newcommand{\R}{\mathcal{R}}
\newcommand{\Z}{\mathbb{Z}}
\newcommand{\Q}{\mathcal{Q}}
\newcommand{\N}{\mathbb{N}}
\newcommand{\A}{\mathcal{A}}
\newcommand{\CM}{\mathcal{M}}
\newcommand{\CL}{\mathcal{L}}
\newcommand{\CP}{\mathcal{P}}
\renewcommand{\P}{\mathcal{P}}
\newcommand{\one}{\boldsymbol{1}}
\newcommand{\Aut}{\mathrm{Aut}}
\newcommand{\id}{\mathrm{Id}}
\newcommand{\tp}{\mathrm{top}}
\newcommand{\LS}{\mathrm {LS}}
\newcommand{\proj}{\mathrm {proj}}
\title{Characteristic measures for language stable subshifts}
\author{Van Cyr}
\address{Bucknell University, Lewisburg, PA 17837 USA}
\email{van.cyr@bucknell.edu}
\author{Bryna Kra}
\address{Northwestern University, Evanston, IL 60208 USA}
\email{kra@math.northwestern.edu}
\subjclass[2010]{37B10, 68R15, 37A15}
\keywords{subshift, block complexity}
\thanks{The second author was partially supported by NSF grant DMS-1800544.}
\begin{document}

\begin{abstract}
We consider the problem of when a symbolic dynamical system supports a Borel probability measure that is invariant under every element of its automorphism group.  It follows readily from a classical result of Parry that 
the full shift on finitely many symbols, and more generally any mixing subshift of finite type, supports such a measure.  Frisch and Tamuz recently dubbed such measures characteristic, 
and further showed that every zero entropy subshift has a characteristic measure.  
While it remains open if every subshift over a finite alphabet has a characteristic measure, 
we define a new class of shifts, which we call language stable subshifts, 
and show that these shifts have characteristic measures.  This is a large 
class that is generic in several senses and contains numerous positive entropy examples. 
\end{abstract}

\maketitle

\section{Introduction}
Suppose $X$ is a compact metric space and $T\colon X\to X$ is a homeomorphism.  The 
Krylov-Bogolioubov 
Theorem says that there exists a Borel probability measure $\mu$ that is supported on $X$ and is invariant under $T$, meaning that $T_*\mu=\mu$. 
Using tools of ergodic theory to study the measure-preserving system $(X,T,\mu)$, 
we can often obtain information about the topological dynamical system $(X,T)$. 
While the homeomorphism $\sigma$ determines a $\Z$-action on $X$, 
there is another (often larger) action on $X$ determined by the automorphism group 
$$ 
\Aut(X):=\{\psi\in\text{Homeo}(X)\colon\psi T=T\psi\} 
$$ 
of all self-conjugacies of $(X,T)$.  This is a natural approach for 
studying when two topological systems $(X,T)$ and $(Y,S)$ are topologically conjugate: if $(X,T)$ 
and $(Y,S)$ are  conjugate as $\Z$-systems, then $\Aut(X)\cong\Aut(Y)$ as 
groups, 
 and $(X,\Aut(X))$ and $(Y,\Aut(Y))$ are topologically conjugate as $\Aut(X)$-systems.  Systems that can be distinguished as $\Aut(X)$-systems can 
therefore also be distinguished as $\Z$-systems. 
 
 With this in mind, it is natural to ask if there is an analog of the Krylov-Bogolioubov Theorem for the action of the automorphism group of a dynamical system.  
Using terminology introduced by Frisch and Tamuz~\cite{FT2}, if $(X,T)$ is a topological dynamical system and $\Aut(X)$ denotes its automorphism group, a Borel probability measure $\mu$ is {\em characteristic} for $(X,\sigma)$ if $\psi_*\mu=\mu$ for all $\psi\in\Aut(X)$.   They~\cite[Question 1.3]{FT2} ask: does every subshift (over a finite alphabet) have a characteristic measure?

It is natural to restrict this question to studying particular systems, rather than consider an arbitrary topological dynamical system; for example, 
given a Cantor set it is easy to check that there is no Borel probability measure that is invariant under all self homeomorphisms of the space to itself.  However, for the broad class of subshifts, the question remains open.  For these systems, the classical theorem of Curtis, Hedlund, and Lyndon (see~\cite{hedlund}) states that every automorphism is given as a sliding 
block code, and in particular it follows that there are only countably many automorphisms.  However, the group of automorphisms can be quite complicated:  for example (see~\cite{hedlund, BLR}), the automorphism group of 
any mixing shift of finite type contains isomorphic copies of any finite group, the free group on any number of generators, along with copies of many other known groups.  
A natural way to attempt to answer Frisch and Tamuz's question is to find 
all of the automorphisms of a subshift and all of the ergodic measures, and then check if any of these measures is a characteristic measure.  Unfortunately, this is not a practical method, as many easy to state questions even about full shifts remain open.  For example, we do not have enough 
information about the automorphism group of a full shift to determine whether the automorphism group of the full shift on two symbols is isomorphic to that on three symbols.  While this type of strategy is doomed to failure with current tools, there are 
other methods for showing the existence of a characteristic measure without knowing either what all of the automorphisms are, or much about the simplex of invariant measures, $\mathcal{M}(X)$. 

We defer the precise definitions and explanations of these results 
until Section~\ref{sec:background}, but give a quick summary of four currently known methods for 
proving that a subshift $(X, \sigma)$ has a characteristic measure: 
	\begin{enumerate}
	\item If $\Aut(X)$ is amenable, this follows from the Krylov-Bogolioubov 

Theorem.\label{it:one}
	\item \label{it:two} 
	When the topological entropy $h_{\tp}(X)$ is $0$, this is the main result in Frisch and Tamuz~\cite{FT2}.  
	\item \label{it:three}
	If there exists a $\sigma$-invariant probability measure $\mu$ supported 
on $X$ such that 
	$$ 
	\{\nu\in\mathcal{M}(X)\colon(X,\sigma,\mu)\text{ is measurably isomorphic to }(X,\sigma,\nu)\}
	$$ 
	is finite, then $X$ has a characteristic measure. 
	\item \label{it:four}
	If there exists a closed subshift $Y\subseteq X$ that supports an $\Aut(Y)$-characteristic measure and 
	there are only finitely many $Z\subseteq X$ such that $(Y,\sigma)$ is topologically 
	conjugate to $(Z,\sigma)$, then $X$ has a characteristic measure.
	\end{enumerate} 
We discuss the first two methods further in Section~\ref{sec:background}; 
properties~\eqref{it:three} and~\eqref{it:four} are implicit in the literature and are described in Lemmas~\ref{lem:measurable} and~\ref{lem:topological}.   
In light of Frisch and Tamuz's theorem, listed as the method~\eqref{it:two}, 
determining whether every symbolic system has a characteristic measure comes down to developing techniques for finding characteristic measures in symbolic systems of positive entropy. 

Our main result gives a large class of symbolic systems, containing numerous systems of positive entropy, that admit characteristic measures.  To define this class, we introduce a new condition called language stability 
(see Section~\ref{section:main}).  An arbitrary shift can be approximated from the outside by shifts of finite type, and the  class of language stable shifts are those that are approximated, infinitely often, faster than the expected rate. 
This definition is characterized by considering the words that do not appear in the language of the system, and placing restrictions on the frequency with which such  new forbidden words arise.  
The class of language stable systems includes many well-studied systems, including numerous zero entropy systems such as the Sturmian systems, and numerous positive entropy systems, such as the shifts of finite type.  However this class also contains many more systems:
in Section~\ref{sec:generic} we show that the set of language stable subshifts is generic, 
in a strong sense.  We show that the language stable shifts are 
a dense $G_{\delta}$, with respect to the Hausdorff topology, in both the 
space of all subshifts with a fixed alphabet and in the subspace of positive entropy subshifts with a fixed alphabet.

Our first main result is to show that language stability guarantees the existence of a characteristic measure: 
\begin{theorem}\label{thm:main} 
Every language stable subshift supports a characteristic measure. 
\end{theorem}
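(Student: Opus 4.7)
The plan is to prove Theorem~\ref{thm:main} by reducing it to one of the known methods for producing characteristic measures, specifically method~\eqref{it:four} formalized as Lemma~\ref{lem:topological}. That is, I would try to extract from the language stability hypothesis a proper closed subshift $Y \subseteq X$ that (a) admits its own characteristic measure and (b) has only finitely many topologically conjugate copies sitting inside $X$. The restriction on the frequencies of forbidden words should provide exactly the rigidity needed for (b), while (a) is meant to be handled either by the Frisch--Tamuz theorem (method~\eqref{it:two}) if $Y$ has zero entropy, or by an inductive/bootstrap argument inside the class of language stable subshifts.

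First I would unpack the language stability condition and use it to build a canonical descending sequence of subshifts $X \supseteq X_1 \supseteq X_2 \supseteq \cdots$, where each $X_n$ is obtained by forbidding the words in the complement of the language of $X$ that appear with frequency (as near-miss substitutions into legal words, or as patterns whose length-$n$ extensions behave anomalously) above a threshold depending on $n$. Language stability should ensure that this sequence stabilizes, or that its intersection $Y = \bigcap_n X_n$ is a nontrivial subshift whose complexity is tightly controlled by the forbidden-word frequencies. The aim is to force $Y$ to have zero entropy, in which case method~\eqref{it:two} furnishes a characteristic measure on $Y$ immediately.

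Next, I would control the number of subshifts of $X$ topologically conjugate to $Y$. By the Curtis--Hedlund--Lyndon theorem any such conjugacy is realized by a sliding block code of some window length $N$, and the image of $Y$ under such a code is determined by a finite recipe. The key observation should be that two distinct conjugate copies $Z_1, Z_2 \subseteq X$ must, when compared at the level of length-$N$ patterns, differ on words whose frequencies of occurrence are constrained by the language stability of $X$; this should leave only finitely many possibilities for the recipe and hence for $Z$. Once $Y$ has both a characteristic measure and finitely many conjugate copies in $X$, Lemma~\ref{lem:topological} completes the argument.

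The hard part will be step one: isolating the right canonical subshift $Y$ and verifying that it has the desired properties solely from the language stability axiom, without accidentally passing to a trivial or empty system. In particular, it is delicate to argue that the filtration stabilizes at a subshift that is simultaneously simple enough for an existing characteristic-measure method to apply and canonical enough to have only finitely many conjugate copies in $X$. If the initial choice of $Y$ still has positive entropy, I would iterate the construction, exploiting that the class of language stable subshifts is closed under the passage to $X_n$, and argue by an induction on some complexity invariant (for instance, a suitably defined notion of forbidden-word density) that the process terminates. The final step of combining the finite-conjugate-copies bound with Lemma~\ref{lem:topological} is then purely formal.
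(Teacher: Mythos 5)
Your plan goes in the wrong direction and would fail on exactly the examples the paper cares about. The central move you propose is to locate a proper closed subshift $Y\subseteq X$ with a characteristic measure and only finitely many conjugate copies inside $X$, then invoke Lemma~\ref{lem:topological}. But the paper itself constructs (Lemma~\ref{lemma:def-X}) a language stable, \emph{minimal}, positive entropy subshift. For a minimal $X$ there is no proper nonempty closed subshift at all, so the only candidate is $Y=X$, and then the step ``$Y$ has zero entropy, apply Frisch--Tamuz'' is simply false. Your fallback (``iterate on $X_n$ and induct on a forbidden-word density'') has the same problem: minimality gives you nothing to descend to. More broadly, the whole point of Section~\ref{sec:example} is to exhibit a language stable system where methods (1)--(3) provably fail and method (4) cannot produce the measure in question, so trying to reduce Theorem~\ref{thm:main} to method (4) runs against what the theorem is for.

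There is also a sign error in the filtration you describe. You want to build a descending chain of subshifts \emph{inside} $X$ by forbidding more words; but the words outside $\mathcal{L}(X)$ never occur in $X$, so ``frequencies of forbidden words'' are not a meaningful filtering device, and forbidding legal words of $X$ would only increase, not control, the set of conjugate copies you must later bound. The paper's actual argument uses the SFT cover, which is a descending chain of shifts \emph{containing} $X$: each $X_n$ is an SFT obtained by forbidding only the minimal forbidden words of length $\leq n$. The point of language stability is that along the long gaps in the forbidden-length spectrum, the cover stabilizes ($X_{s'}=\cdots=X_{s'+r(s')-1}$). Then, given $\psi\in\Aut(X)$ of range $R$ (for $\psi$ and $\psi^{-1}$), once $r(s')\geq 2R+1$ the block code maps $X_{s'+r(s')-1}$ into itself, so $\psi$ is an automorphism of that SFT. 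Each $X_n$ already has a characteristic measure $\xi_n$ (an average of its finitely many ergodic measures of maximal entropy, via Lemmas~\ref{lem:mmes} and~\ref{lem:measurable}); a weak* limit $\xi$ of the $\xi_{s'+r(s')-1}$ along the stable window lengths is then $\psi$-invariant for every $\psi$, because $\psi$-invariance holds at each finite stage and passes to the limit. Note this is an ``outer'' approximation by larger SFTs plus a compactness argument, not an ``inner'' reduction to a smaller subshift with few conjugate copies. The conjugate-copies bound you flag as the hard part is never needed, and indeed the paper shows it cannot be made to work for the measure of maximal entropy on their example.
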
 
In fact we deduce (see Corollary~\ref{cor:language-stable}) that the characteristic measure we find 
is a measure of maximal entropy for the subshift.

The class of language stable shifts has numerous properties that are interesting in their own right,
 and in this work we focus on their relevancy to finding a characteristic measure on a subshift.  
 In addition to showing that language stable shifts are a generic class of subshifts with characteristic measures, 
 we explicitly construct examples of systems 
with characteristic measures that were not previously known to exist. 
More precisely, in Section~\ref{sec:example} we  build an example of a language stable subshift, which thus has a characteristic measure by our theorem, but none of the four previously 
known methods of proving the existence of characteristic measures 
applies in this case. 
This example is a product of three systems, and the properties of these systems are 
of independent interest.  
The first component system $X$ is a language stable, positive entropy, minimal subshift, and this is constructed by inductively defining systems that approximate the system $X$ from the outside while controlling the the language of the system.  The second component system $Y$ is a full shift, which by a result of Boyle, Lind, and Rudolph~\cite{BLR} has a unique characteristic measure of positive entropy (and this is the measure of maximal entropy).  The third component $Z$ is a language stable shift with countably many ergodic measures, all of which are isomorphic, and it is constructed by coding a fixed rotation with respect to different partitions that converge to the coding that gives rise to the Sturmian coding.

In our example,  the automorphism group of this system is not amenable, the system has positive entropy, 
and every measure supported on the system is measurably isomorphic to infinitely many other measures on the system, ensuring 
that none of the first three conditions is satisfied.  The fourth condition is more subtle.  We do not rule out the possibility that it could be used to show that our example carries some characteristic measure.  However, Corollary~\ref{cor:language-stable} guarantees that our example carries a characteristic measure of maximal entropy and we show that this measure could not arise by applying the fourth condition to our example.  More specifically, we show that any proper subshift of our example that has full topological entropy is topologically conjugate to infinitely many other proper subshifts of our example.  This ensures that, if the fourth condition does apply to our example, it can only produce characteristic measures with less than maximal entropy.
Therefore our result guarantees the existence of a characteristic measure 
of maximal entropy that could not have been seen to exist by any of the four previously described methods.
Further, we emphasize that our theorem shows the existence of a characteristic measure in our system without our having to either determine the algebraic structure of its group of automorphisms or describe all of its invariant measures.  This is highly advantageous: even if we could describe the automorphism group of each of 
the three systems, the automorphism group of their product may be significantly more complicated than just the product of their individual automorphism groups.  By way of example, we mention a recent theorem of Salo and 
Schraudner~\cite{salo-schraudner} that if $X\subseteq\{0,1\}^{\Z}$ is the 
``sunny side up shift'' consisting of all elements of $\{0,1\}^{\Z}$ that 
have at most one $1$, then $\Aut(X)\cong\Z$ while $\Aut(X\times X)\cong(\Z^{\infty}\rtimes S_{\infty})\rtimes(\Z^2\rtimes S_2)$.

We conclude this introduction with an application to further motivate the 
study of characteristic measures.  Beyond existence of a characteristic measure for a subshift, 
it is a natural question whether knowledge of such a measure gives practical information about the subshift.  
There has been significant interest in determining the algebraic properties of $\Aut(X)$ for different subshifts $X$.  Most of these advances begin with assumptions on $X$, such as a constraint on 
some type of complexity of the shift, like the growth rate of the block complexity function or the visiting complexity for recurrence, or 
structure in the dynamics of the shift, such as being a shift of finite type or being a Toeplitz shift. 
One then uses these constraints to show that $\Aut(X)$ either must have or cannot have certain algebraic properties.  Another, less explored, way to study $\Aut(X)$ on some explicitly given shift is to find small range block codes that define automorphisms and study the relations in the subgroup of $\Aut(X)$ that they generate.  This approach seems fruitful both because of its computational nature and because there are many natural questions about $\Aut(X)$ that, so far, have resisted being answered with previously developed approaches.  As a specific example, it is unknown whether there exists 
a subshift whose automorphism group contains a finitely generated, nonabelian, infinite, nilpotent group. 
One could imagine a computational approach to this problem where a specific subshift $(X,\sigma)$ is selected, the block codes of small range are checked to see which ones determine automorphisms, and it is checked whether, among them, there exist $\varphi,\psi\in\Aut(X)$ of infinite order
such that 
$$
\theta = [\varphi, \psi] = \varphi\psi\varphi^{-1}\psi^{-1}\neq\id, \quad \varphi\theta = \theta\varphi, \quad\text{ and } \quad \psi\theta = \theta\psi.
$$
Then the group generated by these relations would be an infinite, nonabelian, quotient of the discrete Heisenberg group. 
For explicitly given block codes, we note that these relations can be checked easily simply by finding the block code each of these group elements 
defines and checking to see if it is the identity (a different approach would be needed to check if $\varphi$ and $\psi$ have infinite order).  A challenge to investigating the structure of $\Aut(X)$ in this way, however, is that we must find block codes that determine automorphisms of $X$.  
This leads to the question: 
\begin{question}
Given a subshift $X$ and a block code $\varphi$ defined on $X$, determine 
whether $\varphi$ determines an automorphism of $X$. 
\end{question}

In Section~\ref{sec:application}, we explain how a characteristic measure 
gives rise to a set of nontrivial conditions on a block code that are necessary for it to be invertible.  We give an example where they can be used to show a certain block code does not have an inverse.  Although these conditions are not sufficient for a block code to have an inverse, they do allow one to eliminate many block codes as candidates for being automorphisms.

\subsection*{Acknowledgment}
We thank Anthony Quas for helpful conversations during the preparation of 
this article.

\section{Preliminaries and Notation}
\label{sec:background}

\subsection{Upper Banach density} If $S\subseteq\N$, then the {\em upper Banach density $d^*(S)$} of $S$ is defined to be  
$$ 
d^*(S):=\limsup_{n\to\infty}\max_k\frac{|S\cap\{k,k+1,k+2,\dots,k+n-1\}|}{n}. 
$$ 
A set $S$ has upper Banach density $1$ if and only if there are arbitrarily long runs of consecutive integers that are all elements of $S$. 

\subsection{Symbolic systems} Let $\A$ be a finite set and let $\A^{\Z}$ be the set of functions $x\colon\Z\to\A$, and denote $x\in\A^\Z$ as $x = 
(x_i)_{i\in\Z}$.  The space $\A^Z$ is a compact metric space when endowed 
with the metric 
$$ 
d\bigl((x_i),(y_i)\bigr):=2^{-\inf\{|i|\colon x_i\neq y_i\}}. 
$$ 
The {\em left-shift} map $\sigma\colon\A^{\Z}\to\A^{\Z}$ defined by $(\sigma x)_i:=x_{i+1}$ for all $i\in\Z$ is a homeomorphism.  
For each $w=(w_0,\dots,w_{n-1})\in\A^n$ the {\em cylinder set} is 
$$ 
[w]_0^+:=\{x\in\A^{\Z}\colon x_i=w_i \text{ for all }0\leq i<n\} 
$$ 
and the collection of sets $\{\sigma^i[w]_0^+\colon w\in\A^*\text{, }i\in\Z\}$ gives a basis for the topology of $\A^{\Z}$ (when no confusion can arise, we use the simpler notation $[w]$ to mean $[w]_0^+$). 
If $X\subset\A^\Z$ is closed and $\sigma$-invariant, then $(X, \sigma)$ is
a  {\em subshift}, and when the shift $\sigma$ is understood from the context, we omit the transformation from the notation and refer to $X\subset 
\A^\Z$ as a subshift. 

The {\em language} of a subshift $(X, \sigma)$ is 
$$ 
\mathcal{L}(X):=\{w\in\A^*\colon[w]_0^+\cap X\neq\emptyset\}
$$ 
and any $w\in\mathcal{L}(X)$ is called a {\em word} in the language ($w\in\mathcal{L}(X)$ is sometimes referred to as a {\em factor} in the literature).  
The {\em complexity} $P_X(n)$ counts the number of words of length $n$ in 
the language $\CL(X)$.

$\mathrm{Homeo}(X)$ forms a group under composition and $\Aut(X)$ is the centralizer of $\sigma$ in $\mathrm{Homeo}(X)$.  
A map $\psi\colon X\to \A^\Z$ is called a {\em sliding block code} 
if there exists $R\in\N$ and a map $\Psi\colon\mathcal{L}_{2R+1}(X)\to\A$ 
such that for all $x\in X$ and $i\in\Z$, we have 
$$ 
(\psi x)_i=\Psi(x_{i-R},\dots,x_i,\dots,x_{i+R}). 
$$ 
In this case the number $R$ is called a {\em range} for $\psi$.  A classical result characterizes the automorphisms of a subshift: 
\begin{theorem}[Curtis-Hedlund-Lyndon Theorem~\cite{hedlund}]
\label{th:CHL}
Every element of $\Aut(X)$ is a sliding block code.  Conversely, any sliding block code from $X$ to $X$ that has a sliding block code inverse is an element of $\Aut(X)$. 
\end{theorem}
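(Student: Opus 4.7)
The plan is to prove both directions of Theorem~\ref{th:CHL} by exploiting the interplay between the product topology on $\A^{\Z}$, the compactness of $X$, and the commutation relation $\psi\sigma = \sigma\psi$.

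For the forward direction, I would start with an arbitrary $\psi\in\Aut(X)$ and focus on the zeroth coordinate of its image. Since $\psi$ is continuous and the map $x\mapsto x_0$ is continuous with finite range $\A$, for each symbol $a\in\A$ the set $U_a:=\psi^{-1}([a]_0^+)\cap X$ is clopen in $X$. The collection $\{U_a : a\in\A\}$ is therefore a finite clopen partition of $X$. Because the cylinders $\{\sigma^i[w]_0^+ : w\in\A^*,\,i\in\Z\}$ form a basis for the topology and each $U_a$ is compact, for each $a\in\A$ one can cover $U_a$ by finitely many basic cylinders intersected with $X$. Taking the maximum radius over this finite collection (over all $a\in\A$) yields a single integer $R\in\N$ with the property that whenever $x,y\in X$ satisfy $x_i=y_i$ for all $|i|\le R$, we have $(\psi x)_0=(\psi y)_0$.

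Given this uniform $R$, I would define the block map $\Psi\colon\mathcal{L}_{2R+1}(X)\to\A$ by setting $\Psi(w_{-R},\dots,w_R)$ equal to the common value $(\psi x)_0$ over all $x\in X$ with $x_i=w_i$ for $|i|\le R$ (which is well-defined by the previous step, and every $w\in\mathcal{L}_{2R+1}(X)$ extends to some $x\in X$ by definition of the language). The sliding block code identity then follows from the commutation with $\sigma$:
\[
(\psi x)_i = (\sigma^i\psi x)_0 = (\psi\sigma^i x)_0 = \Psi\bigl((\sigma^i x)_{-R},\dots,(\sigma^i x)_R\bigr) = \Psi(x_{i-R},\dots,x_{i+R}).
\]
After re-indexing, this exhibits $\psi$ as a sliding block code of range $R$.

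For the converse, I would verify the three defining features of an element of $\Aut(X)$. Shift equivariance is immediate from the definition: a sliding block code $\psi$ with block map $\Psi$ and range $R$ satisfies $(\psi\sigma x)_i=\Psi((\sigma x)_{i-R},\dots,(\sigma x)_{i+R})=\Psi(x_{i+1-R},\dots,x_{i+1+R})=(\sigma\psi x)_i$. Continuity follows because if $x,y\in X$ agree on the window $[-n-R,n+R]$, then $\psi x$ and $\psi y$ agree on $[-n,n]$, so $\psi$ is Lipschitz with respect to the metric $d$. Thus if $\psi$ has a sliding block code inverse $\varphi$, both $\psi$ and $\varphi$ are continuous self-maps of $X$ that are mutually inverse, so $\psi$ is a homeomorphism of $X$ commuting with $\sigma$, hence $\psi\in\Aut(X)$. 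The only subtle step is the extraction of the uniform radius $R$, which is where compactness of $X$ is essential.
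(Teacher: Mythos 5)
Your proof is correct: the paper states this as a classical result, citing Hedlund, and supplies no proof of its own, and your argument is precisely the standard one (compactness of $X$ plus continuity of $x\mapsto(\psi x)_0$ yields a uniform radius $R$, commutation with $\sigma$ propagates the local rule to all coordinates, and the converse follows from shift-equivariance and the Lipschitz continuity of block codes). The one step worth stating explicitly, which you note only in passing, is that every $w\in\mathcal{L}_{2R+1}(X)$ extends to some $x\in X$, so that $\Psi$ is defined on all of $\mathcal{L}_{2R+1}(X)$; with that, the argument is complete.
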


Following Frisch and Tamuz~\cite{FT2}, we define the central object of study:
\begin{definition} 
Let $(X,T)$ be a topological dynamical system and let 
$$ 
\Aut(X):=\{\psi\in\text{Homeo}(X)\colon\psi T=T\psi\} 
$$ 
be its automorphism group.  A Borel probability measure $\mu$ is {\em characteristic} for $(X,T)$ if $\psi_*\mu=\mu$ for all $\psi\in\Aut(X)$. 
\end{definition} 
We study this notion for a subshift $(X,\sigma)$.

\subsection{Coding of orbits}
Assume $(X, T)$ is an invertible topological dynamical system and let $\CP$ be a partition of the space $X$ into finitely many sets, meaning that $\CP = \{P_1, \ldots, P_n\}$ for some sets $P_i\subset X$ satisfying 
$\bigcup_{i=1}^nP_i = X$.  Note that we make no assumption that the sets $P_i$ are open and require that their union cover all of $X$.  If $x\in X$, then the {\em coding of the orbit of $x$} is the sequence $(x_j)_{j\in\Z}$ defined by $x_j = i$ if and only if $T^jx\in P_i$.  The {\em coding of the system $(X,T)$} is the symbolic system obtained by taking the closure of the codings of all $x\in X$, and it is easy to check that this is a subshift of $\{1, \ldots, n\}^\Z$.

\subsection{The forbidden word construction} 
We recall how to construct subshifts by specifying a list of forbidden words. 
Let $\mathcal{F}\subseteq\A^*$ be a (finite or infinite) set of words.  Define 
$$ 
Y_{\mathcal{F}}:=\{y\in\A^{\Z}\colon\sigma^iy\notin[w]_0^+\text{ for all }w\in\mathcal{F}\text{ and all }i\in\Z\}. 
$$ 
It is immediate that $Y_{\mathcal{F}}$ is a subshift and we say that $\mathcal{F}$ is a list of {\em forbidden words} in $Y_{\mathcal{F}}$. If $X$ 
is a subshift, define 
$$ 
\mathcal{F}(X):=\{w\in\A^*\colon w\notin\mathcal{L}(X)\}. 
$$ 
It follows immediately from the definitions that $X\subseteq Y_{\mathcal{F}(X)}$ and it follows from the compactness of $X$ that $Y_{\mathcal{F}(X)}\subseteq X$.  Thus $X=Y_{\mathcal{F}(X)}$.  In other words, every subshift can be obtained by specifying an appropriate set of forbidden words.  

On the other hand, if $\mathcal{F}$ is a set of forbidden words and 
$$ 
\CM=\{w\in\mathcal{F}\colon\text{ no proper subword of $w$ lies in $\mathcal{F}$}\} 
$$ 
then $Y_{\CM}=Y_{\mathcal{F}}$.  Thus different sets can present the same subshift through the forbidden words construction.  But there is a canonical ``minimal'' set of forbidden words that presents a subshift.
If we define the {\em minimal forbidden words} $\CM(X)$ by setting 
$$ 
\CM(X):=\{w\in\mathcal{F}(X)\colon\text{ no proper subword of $w$ is in 
$\mathcal{F}(X)$}\} 
$$ 
then $X=X_{\CM(X)}$.  For each $n\in\N$ we define $\CM_n(X):=\CM(X)\cap\A^n$ to be the set of 
minimal forbidden words of length $n$ in $X$. 
The growth rate of the minimal forbidden words was shown to be a conjugacy invariant in B\'eal, Mignosi, and Restivo~\cite{BMRS}, and properties of the subshift that can be deduced from
its presentation via the minimal forbidden words is further studied in~\cite{BMR, MRS}.  

\subsection{Subshifts of finite type and the cover of a subshift} A subshift $X$ is called a {\em subshift of finite type} (SFT) if $\CM(X)$ is finite.  
Bowen~\cite{bowen} gave an equivalent formulation (see~\cite[Theorem 2.1.8]{LM} for a proof): 
\begin{proposition}
\label{prop:bowen}
The shift $(X, \sigma)$ is a subshift of finite type if and only if there 
exists $g\geq 0$ such that whenever $uw, wv\in\CL(X)$ and $|w|\geq g$ then we also have $uwv\in\CL(X)$.  
\end{proposition}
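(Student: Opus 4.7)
The plan is to prove both directions by working directly with the canonical set of minimal forbidden words $\CM(X)$.

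For the forward implication, suppose $X$ is an SFT, so $\CM(X)$ is finite; let $N := \max\{|m| : m \in \CM(X)\}$ and take $g := N$. Given $uw, wv \in \CL(X)$ with $|w| \geq g$, I would argue by contradiction: if $uwv \notin \CL(X)$, then $uwv$ contains some $m \in \CM(X)$ as a subword (since any word containing a forbidden word contains a minimal one). The occurrence of $m$ cannot lie entirely inside $uw$ nor entirely inside $wv$, since both are in $\CL(X)$. But any subword occurrence in $uwv$ that is contained in neither $uw$ nor $wv$ must use at least the last letter of $u$, every letter of $w$, and at least the first letter of $v$, forcing $|m| \geq |w| + 2 \geq N + 2$, which contradicts $|m| \leq N$.

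For the reverse implication, suppose such a $g$ exists. I would show directly that every minimal forbidden word has length at most $g+1$, which bounds $\CM(X) \subseteq \bigcup_{k \leq g+1} \A^k$ and makes it finite. Given $m \in \CM(X)$ with $|m| \geq g + 2$, write $m = a w b$ where $a, b \in \A$ and $|w| = |m| - 2 \geq g$. Minimality of $m$ means every proper subword of $m$ lies in $\CL(X)$, so in particular $aw, wb \in \CL(X)$. Since $|w| \geq g$, the hypothesis applied with $u = a$ and $v = b$ gives $awb = m \in \CL(X)$, contradicting $m \in \mathcal{F}(X)$.

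There is no substantial obstacle in this proof; it is essentially a careful bookkeeping argument about where a forbidden word can sit inside a concatenation $uwv$. The only point deserving attention is the precise counting in the forward direction that shows a forbidden word appearing in $uwv$ but in neither flank must have length at least $|w| + 2$, which is what dictates the choice of $g$ relative to the maximal length of minimal forbidden words. Everything else follows from unpacking the definitions of $\mathcal{F}(X)$ and $\CM(X)$.
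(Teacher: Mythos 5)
The paper does not include its own proof of Proposition~\ref{prop:bowen}; it attributes the statement to Bowen and refers to \cite[Theorem 2.1.8]{LM}. Your proof is correct and is essentially the standard argument. Two minor remarks. In the forward direction you should first dispose of the degenerate case $\CM(X)=\emptyset$ (take $g=0$) so that $N$ is defined, and the parenthetical justification is best phrased as: a word lies in $\CL(X)$ if and only if it contains no element of $\CM(X)$ as a subword, which follows from the definitions by a short descent in $\mathcal{F}(X)$; your position-counting argument showing that any occurrence of a forbidden word in $uwv$ that escapes both $uw$ and $wv$ must span at least $|w|+2$ letters is exactly right. The reverse direction, bounding the length of every minimal forbidden word by $g+1$ via the decomposition $m=awb$ with $a,b$ single letters and $|w|\geq g$, is clean and complete.
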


For any subshift $X$ there is a well-known way to write it as the intersection of a descending chain of subshifts of finite type: for each $n\in\N$, define 
$$ 
X_n:=Y_{\bigcup_{k=1}^n\CM_k(X)} 
$$ 
to be the subshift of finite type using the forbidden words up to length $n$ in $X$.  Then 
$$ 
X_1\supseteq X_2\supseteq X_3\supseteq\dots\supseteq X_n\supseteq\dots 
$$ 
and $X=\bigcap_{n=1}^{\infty}X_n$.  This sequence $\{X_n\}_{n=1}^{\infty}$ is called the {\em SFT cover} of $X$.

A subshift $X$ is {\em topologically transitive} if there exists some $x\in X$ such that 
$$ 
X=\overline{\{\sigma^ix\colon i\in\Z\}}. 
$$ 
It is {\em forward transitive} if there exists some $x\in X$ such that 
$$ 
X=\overline{\{\sigma^ix\colon i\in\N\}}. 
$$ 
Parry~\cite{parry} showed that any forward transitive subshift of finite type is {\em intrinsically ergodic}, meaning there is a unique $\sigma$-invariant measure $\mu$, supported on $X$, such that $h_{\mu}(X)=h_{\tp}(X)$ (in other words, the system has a unique measure of maximal entropy).  For general (not necessarily transitive) subshifts of finite type, we record the following elementary lemma: 
\begin{lemma}\label{lem:mmes} 
Every subshift of finite type supports at most finitely many ergodic measures of maximal entropy. 
\end{lemma}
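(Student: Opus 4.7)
The plan is to use the spectral decomposition of a subshift of finite type into finitely many forward transitive sub-SFTs, combined with Parry's intrinsic ergodicity theorem quoted just above the lemma. Let $X$ be a subshift of finite type. Using Proposition~\ref{prop:bowen} together with a standard higher-block recoding, I would first reduce to the case that $X$ is conjugate to the edge shift on a finite directed graph $G$; this recoding preserves both the topological entropy and the simplex of invariant measures (and is entropy-respecting on every sub-SFT obtained below), so proving the lemma under this assumption suffices.

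Next I would decompose the vertex set of $G$ into its strongly connected components, obtaining finitely many components $C_1,\dots,C_k$. For each $i$, let $X_i\subseteq X$ denote the subshift of bi-infinite paths whose edges stay within $C_i$. Each $X_i$ is a closed, $\sigma$-invariant, forward transitive subshift of finite type, so by Parry's theorem it carries a unique ergodic measure of maximal entropy $\mu_i$, with $h_{\mu_i}(X_i)=h_{\tp}(X_i)$. The key observation is now that the union $\bigcup_{i=1}^k X_i$ coincides with the nonwandering set of $(X,\sigma)$: any nonwandering bi-infinite path must revisit vertices infinitely often in both time directions, which forces it to remain in a single strongly connected component. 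Since every invariant Borel probability measure is supported on the nonwandering set, and the $X_i$ are pairwise disjoint closed invariant sets, ergodicity forces any ergodic $\mu$ on $X$ to satisfy $\mu(X_i)=1$ for some $i$.

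To finish, I would use the variational principle (or the decomposition of invariant measures into ergodic components) to conclude $h_{\tp}(X)=\max_i h_{\tp}(X_i)$. Any ergodic measure of maximal entropy on $X$ must therefore live on some $X_i$ achieving this maximum and attain entropy $h_{\tp}(X_i)$ there, so by Parry's uniqueness it must equal $\mu_i$. Consequently $X$ admits at most $k$ ergodic measures of maximal entropy. The main technical point is the identification of the nonwandering set with $\bigsqcup_i X_i$ and the reduction to an edge-shift presentation, but both are classical facts for subshifts of finite type; everything else follows from Parry's theorem and the finiteness of the strongly connected component decomposition of a finite graph.
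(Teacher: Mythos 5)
Your proof takes essentially the same route as the paper's, but in graph-theoretic rather than word-combinatorial language: the paper's relation $u\sim v$ on words of length $n$ (mutual reachability within $\mathcal{L}(X)$) is precisely the strongly-connected-component decomposition of the associated higher-block graph, and the sub-SFTs $X(v)$ built from the equivalence classes are your $X_i$. The genuine difference lies in how one argues that an ergodic measure of maximal entropy concentrates on a single piece. The paper runs the Birkhoff ergodic theorem directly to show that any two length-$n$ words of positive measure must be $\sim$-equivalent, so it never needs to pass to an edge-shift presentation or invoke nonwandering sets; you instead cite the classical facts that invariant measures live on the nonwandering set and that the nonwandering set of an SFT is the disjoint union of its irreducible components. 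Your version is shorter but leans on more external machinery; the paper's is self-contained. Both then finish identically, applying Parry's theorem and the variational principle to each transitive piece.

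One caveat about the step you gloss over. The sentence asserting that a nonwandering bi-infinite path "must revisit vertices infinitely often in both time directions, which forces it to remain in a single strongly connected component" does not actually establish the claim: a path that crosses a one-way bridge from one SCC to another also revisits vertices infinitely often in both time directions (in the source component as $n\to-\infty$, in the target as $n\to+\infty$), so vertex recurrence alone does not confine a path to one component. The correct argument uses the nonwandering condition more directly: a long central block of $x$ recurs along some nearby orbit, which produces a cycle through all edges in that block, so every edge of $x$ lies in one SCC. Since you are invoking a classical fact this does not invalidate the proof, but the stated justification should be repaired. Two minor points: $X_i$ is empty when $C_i$ is a trivial SCC without a self-loop, so Parry's theorem should be invoked only for the nontrivial components; and Proposition~\ref{prop:bowen} is a combinatorial characterization of SFTs, not the higher-block recoding lemma, so the citation in your opening step is misplaced.
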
 
\begin{proof} 
Let $X$ be an subshift of finite type whose forbidden words all have length at most $n$.  For any $u,v\in\mathcal{L}_n(X)$, 
say that $u\sim v$ if there exist $m_1,m_2>0$ such that $[u]\cap\sigma^{m_1}[v]\neq\emptyset$ and $[v]\cap\sigma^{m_2}[u]\neq\emptyset$.  Let 
$$ 
\mathcal{W}:=\{w\in\mathcal{L}_n(X)\colon w\sim w\}. 
$$ 
Since $X$ is an subshift of finite type whose forbidden words have length 
at most $n$, note that $\sim$ is an equivalence relation on the set $\mathcal{W}$ (it would not necessarily be a transitive relation for more general shifts).  

Let $\mu$ be an ergodic measure of maximal entropy.  For any $u,v\in\mathcal{L}_n(X)$, 
by ergodicity it follows that for $\mu$-almost every $x\in X$ 
$$ 
\lim_{m\to\infty}\frac{1}{m}\sum_{k=0}^{m-1}\one_{[u]}(\sigma^kx)=\mu([u])\quad \text{ and } \quad\lim_{m\to\infty}\frac{1}{m}\sum_{k=0}^{m-1}\one_{[v]}(\sigma^kx)=\mu([v]) 
$$ 
and also 
$$ 
\lim_{m\to\infty}\frac{1}{2m+1}\sum_{k=-m+1}^{m-1}\one_{[v]}(\sigma^kx)=\mu([v]). 
$$ 
It follows from the first two equalities that either $\mu([u])\cdot\mu([v])=0$ or $u\sim v$.  
Using the third equality, it follows that if $\mu([v])>0$ then for $\mu$-almost every $x\in X$ and for every $m\in\Z$ we have $v\sim(x_m,x_{m+1},\dots,x_{m+n-1})$.  There must be at least one $v\in\mathcal{L}_n(X)$ for which $\mu([v])>0$, and for this $v$ we have 
$$ 
\mu\left(\bigcup_{u\sim v}[u]\right)=1. 
$$ 
Furthermore, the measure $\mu$ is supported on the sub-subshift of finite 
type $X(v)$, defined by forbidding 
all words forbidden in $X$ and forbidding all words of length $n$ that are not equivalent to $v$.  
Note that $X(v)$ is a forward transitive subshift of finite type and so by Parry's Theorem~\cite{parry}, $X(v)$ has a unique measure of maximal entropy. 
But $\mu$ is an example of such a measure, since $h_{\mu}(X(v))\leq h_{\tp}(X(v))\leq h_{\tp}(X)=h_{\mu}(X)=h_{\mu}(X(v))$ (since $\mu$ is supported on $X(v)$ and is a measure of maximal entropy on $X$) and so all of the inequalities are actually equalities. 

Thus every ergodic measure of maximal entropy on $X$ is supported on (and 
is a measure of maximal entropy on) a forward transitive subshift of finite type obtained by forbidding words of length $n$ in $X$.  
Since $\mathcal{L}_n(X)$ is finite, there can only be finitely many such forward transitive subshifts of finite type that arise from this construction, 
each of which supports a unique measure of maximal entropy. 
Thus $X$ supports only finitely many measures of maximal entropy. 
\end{proof} 

\section{Previously known criteria implying the existence of characteristic measures} 
\subsection{Finitely many measurably isomorphic systems}
Suppose $(X,\sigma)$ is a subshift and $\mu$ is a $\sigma$-invariant measure supported on $X$.  A key tool that underlies most of the cases where it is known how to prove that characteristic measures exist, is the following: if $\psi\in\Aut(X)$, then $(X,\sigma,\mu)$ and $(X,\sigma,\psi_*\mu)$ are conjugate as measure-preserving systems.  This gives the following 
criterion (which is used implicitly in the literature, for example in~\cite{FT2}) for establishing the existence of characteristic measures: 

\begin{lemma}\label{lem:measurable} 
Let $(X,\sigma)$ be a subshift and suppose there exists a $\sigma$-invariant measure $\mu$ supported on $X$ for which the set  
$$ 
\mathcal{I}(\mu):=\{\nu\colon\text{$\nu$ is a $\sigma$-invariant measure supported on $X$ and $(X,\sigma,\mu)\cong(X,\sigma,\nu)$}\} 
$$ 
is finite.  Then $X$ supports a characteristic measure. 
\end{lemma}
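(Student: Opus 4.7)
The plan is to use the orbit of $\mu$ under the action of $\Aut(X)$ on the space of Borel probability measures on $X$, show that this orbit is contained in $\mathcal{I}(\mu)$ and hence is finite, and then average over this finite orbit to obtain a characteristic measure.

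First I would observe that every $\psi\in\Aut(X)$ is, in particular, a Borel bimeasurable bijection of $X$ that satisfies $\psi\sigma = \sigma\psi$. Consequently $\psi$ realizes an isomorphism of the measure-preserving systems $(X,\sigma,\mu)$ and $(X,\sigma,\psi_*\mu)$: it intertwines $\sigma$ with itself, and by definition of the pushforward, $\mu = \psi^{-1}_*(\psi_*\mu)$. The measure $\psi_*\mu$ is also $\sigma$-invariant because $\sigma_*\psi_*\mu = \psi_*\sigma_*\mu = \psi_*\mu$. Hence for every $\psi\in\Aut(X)$ we have $\psi_*\mu\in\mathcal{I}(\mu)$.

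Next, the orbit $\mathcal{O}:=\{\psi_*\mu\colon\psi\in\Aut(X)\}$ is a subset of $\mathcal{I}(\mu)$ and is therefore finite by hypothesis. Enumerate its distinct elements as $\nu_1,\dots,\nu_k$, and define
$$
\bar\mu:=\frac{1}{k}\sum_{i=1}^k\nu_i.
$$
Each $\nu_i$ is $\sigma$-invariant, so $\bar\mu$ is $\sigma$-invariant and is a Borel probability measure supported on $X$. Moreover, $\Aut(X)$ acts on the finite set $\mathcal{O}$ by pushforward: for any $\varphi\in\Aut(X)$, the map $\nu\mapsto\varphi_*\nu$ sends $\psi_*\mu$ to $(\varphi\psi)_*\mu\in\mathcal{O}$, and this map is a bijection of $\mathcal{O}$ with inverse given by pushforward under $\varphi^{-1}$. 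Therefore $\varphi_*$ permutes the $\nu_i$, and by linearity of the pushforward,
$$
\varphi_*\bar\mu = \frac{1}{k}\sum_{i=1}^k\varphi_*\nu_i = \frac{1}{k}\sum_{i=1}^k\nu_i = \bar\mu.
$$
Since $\varphi\in\Aut(X)$ was arbitrary, $\bar\mu$ is characteristic.

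There is no serious obstacle: the content of the argument is packaged entirely in the finiteness of $\mathcal{I}(\mu)$, together with the elementary fact that automorphisms of $(X,\sigma)$ induce measurable isomorphisms between $\mu$ and its pushforwards. The only point that deserves care is verifying that the $\Aut(X)$-action on $\mathcal{O}$ is by permutations, which follows because pushforward is a group action of $\mathrm{Homeo}(X)$ on measures and restricts to an action of the subgroup $\Aut(X)$ on the invariant subset $\mathcal{O}$.
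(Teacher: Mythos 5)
Your proof is correct and takes essentially the same approach as the paper: average over a finite set of measures that $\Aut(X)$ permutes by pushforward. The only (cosmetic) difference is that you average over the orbit $\mathcal{O}=\{\psi_*\mu:\psi\in\Aut(X)\}$, whereas the paper averages over all of $\mathcal{I}(\mu)$; both sets are finite and $\Aut(X)$-invariant, and either average works.
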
 
\begin{proof} 
Define 
$$ 
\xi:=\frac{1}{|\mathcal{I}(\mu)|}\sum_{\nu\in\mathcal{I}(\mu)}\nu. 
$$ 
Then $\xi$ is a $\sigma$-invariant measure supported on $X$.  For any $\psi\in\Aut(X)$, note that $\psi_*$ induces a permutation on $\mathcal{I}(\mu)$ by finiteness of $\mathcal{I}(\mu)$ and the fact that $(X,\sigma,\nu)\cong(X,\sigma,\varphi_*\nu)$ for any $\nu\in\mathcal{I}(\mu)$.  Thus $\psi_*\xi=\xi$. 
\end{proof} 

This gives the existence of a characteristic measure for any full shift, and more generally any shift of finite type: 
\begin{corollary}\label{cor:sft-mme}
Every subshift of finite type has a characteristic measure. 
\end{corollary}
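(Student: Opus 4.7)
The plan is to combine the finiteness statement of Lemma~\ref{lem:mmes} with the criterion of Lemma~\ref{lem:measurable}. Fix a subshift of finite type $X$, and let $\mu$ be an ergodic measure of maximal entropy on $X$ (which exists, for instance, as an extreme point of the nonempty compact convex set of invariant measures attaining $h_{\tp}(X)$ via upper semicontinuity of entropy for expansive systems; alternatively one may invoke Lemma~\ref{lem:mmes} directly to produce such a $\mu$). I will argue that $\mathcal{I}(\mu)$ is finite and then apply Lemma~\ref{lem:measurable}.

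To control $\mathcal{I}(\mu)$, I would use two standard facts. First, ergodicity is a measure-theoretic isomorphism invariant, so every $\nu\in\mathcal{I}(\mu)$ is ergodic. Second, Kolmogorov--Sinai entropy is a measure-theoretic isomorphism invariant, so every $\nu\in\mathcal{I}(\mu)$ satisfies $h_\nu(X)=h_\mu(X)=h_{\tp}(X)$. Therefore $\mathcal{I}(\mu)$ is contained in the set of ergodic measures of maximal entropy on $X$, which is finite by Lemma~\ref{lem:mmes}. An application of Lemma~\ref{lem:measurable} then yields a characteristic measure on $X$.

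As an alternative route, one can bypass Lemma~\ref{lem:measurable} and argue directly: let $\mu_1,\dots,\mu_k$ be the ergodic measures of maximal entropy supplied by Lemma~\ref{lem:mmes}. Any $\psi\in\Aut(X)$ commutes with $\sigma$ and is a homeomorphism, hence $\psi_*$ preserves ergodicity as well as measure-theoretic entropy (the latter since $\psi_*$ is a measurable isomorphism of the system with itself conjugating $\sigma$ to $\sigma$). Consequently $\psi_*$ permutes the finite set $\{\mu_1,\dots,\mu_k\}$, and the average
$$
\mu := \frac{1}{k}\sum_{i=1}^k \mu_i
$$
is fixed by $\psi_*$ for every $\psi\in\Aut(X)$.

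There is no real obstacle here once Lemmas~\ref{lem:mmes} and~\ref{lem:measurable} are in hand; the only point that must be cited carefully is that automorphisms preserve the measure-theoretic data (ergodicity and entropy) that pin $\mathcal{I}(\mu)$ inside the finite set of ergodic measures of maximal entropy. I would include a brief remark that this same strategy in fact yields a characteristic measure of maximal entropy, anticipating the parallel statement the authors prove later (Corollary~\ref{cor:language-stable}) for language stable subshifts.
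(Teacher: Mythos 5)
Your argument matches the paper's proof essentially step for step: existence of an ergodic measure of maximal entropy via upper semicontinuity of entropy, finiteness of such measures from Lemma~\ref{lem:mmes}, isomorphism-invariance of ergodicity and entropy to confine $\mathcal{I}(\mu)$ inside that finite set, and then Lemma~\ref{lem:measurable}. The ``alternative direct route'' you sketch is just Lemma~\ref{lem:measurable}'s proof specialized to this situation, so there is no genuine divergence from the paper.
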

\begin{proof} 
Let $X$ be a subshift of finite type. 
The entropy map, $\mu\mapsto h_{\mu}(\sigma)$, is upper semi-continuous (see~\cite[Theorem 8.2]{W}), and by~\cite[Theorem 8.7, Part (v)]{W}, the 
set of measures of maximal entropy is nonempty, and using Part (iii) of the same theorem, there is an ergodic measure of maximal entropy. 
By Lemma~\ref{lem:mmes}, there are only finitely many ergodic measures of 
maximal entropy.  Since the properties of being ergodic and being a measure of maximal entropy are preserved under measure theoretic isomorphism, Lemma~\ref{lem:measurable} guarantees that $X$ supports a characteristic measure. 
\end{proof} 
The special case that the subshift of finite type is forward transitive is not a new result (we did not 
find the non-transitive case in the literature, but expect Corollary~\ref{cor:sft-mme} to be no surprise to experts).  
Coven and Paul~\cite{CP} 
showed that any automorphism preserves entropy and 
Parry~\cite{parry} showed that a forward transitive subshift of finite type has a unique measure of maximal entropy. 
It follows that this unique measure (of maximal entropy) is preserved under the full automorphism group. However we need further use of some of the tools used to prove Corollary~\ref{cor:sft-mme} as well as its result in our arguments. 

\subsection{Finitely many topologically conjugate systems}
We also make use of a topological analog of Lemma~\ref{lem:measurable}: 
\begin{lemma}\label{lem:topological} 
Let $(X,\sigma)$ be a subshift and suppose there exists a closed subshift 
$Y\subseteq X$ that supports an $\Aut(Y)$-characteristic measure and for which the set 
$$ 
\mathcal{J}(Y):=\{Z\subseteq X\colon(Y,\sigma)\text{ is topologically conjugate to }(Z,\sigma)\} 
$$ 
is finite.  Then $X$ supports a characteristic measure. 
\end{lemma}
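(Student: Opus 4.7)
The proof will closely parallel the argument for Lemma~\ref{lem:measurable}: the idea is to produce a characteristic measure on $X$ by averaging pushforwards of the given $\Aut(Y)$-characteristic measure over the finite set $\mathcal{J}(Y)$, using the fact that the automorphism group of $X$ must permute this set.

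Concretely, let $\nu$ be an $\Aut(Y)$-characteristic measure on $Y$. For each $Z\in\mathcal{J}(Y)$, choose a topological conjugacy $\phi_Z\colon Y\to Z$ (with $\phi_Y = \id$) and define a Borel measure $\nu_Z$ on $X$ by $\nu_Z := (\phi_Z)_*\nu$, regarded as a measure supported on $Z\subseteq X$. The first step is to check that $\nu_Z$ does not depend on the choice of conjugacy $\phi_Z$: if $\phi_Z'$ is another conjugacy from $Y$ to $Z$, then $\phi_Z^{-1}\circ\phi_Z'\in\Aut(Y)$, and the $\Aut(Y)$-invariance of $\nu$ forces $(\phi_Z')_*\nu=(\phi_Z)_*\nu$. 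In particular $\nu_Z$ is an intrinsic invariant of the pair $(Y,Z)$.

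Now set
$$
\xi:=\frac{1}{|\mathcal{J}(Y)|}\sum_{Z\in\mathcal{J}(Y)}\nu_Z,
$$
which is a Borel probability measure on $X$; since each $\nu_Z$ is $\sigma$-invariant (because $\phi_Z$ intertwines the shift on $Y$ and the shift on $Z$), so is $\xi$. It remains to verify that $\psi_*\xi = \xi$ for every $\psi\in\Aut(X)$. The key observation is that if $\psi\in\Aut(X)$ and $Z\in\mathcal{J}(Y)$, then $\psi(Z)\subseteq X$ is closed and $\sigma$-invariant, and $\psi|_Z$ is a topological conjugacy from $Z$ to $\psi(Z)$; composing with a conjugacy from $Y$ to $Z$ gives a conjugacy from $Y$ to $\psi(Z)$, so $\psi(Z)\in\mathcal{J}(Y)$. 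Thus $\psi$ induces a permutation of the finite set $\mathcal{J}(Y)$. Moreover, the map $\psi\circ\phi_Z$ is a topological conjugacy from $Y$ to $\psi(Z)$, so by the well-definedness step above, $\psi_*\nu_Z = (\psi\circ\phi_Z)_*\nu = \nu_{\psi(Z)}$. Summing over $Z$ and using the permutation, we conclude $\psi_*\xi=\xi$, so $\xi$ is a characteristic measure for $(X,\sigma)$.

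The only subtlety, and the one point requiring genuine input (rather than routine bookkeeping), is the well-definedness argument: it is precisely there that the hypothesis that $\nu$ is $\Aut(Y)$-\emph{characteristic}, rather than merely $\sigma$-invariant, is used. Everything else is a direct finite-set averaging argument analogous to the proof of Lemma~\ref{lem:measurable}, and no further properties of the ambient shift $X$ are needed beyond the finiteness of $\mathcal{J}(Y)$.
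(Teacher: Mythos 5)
Your proof is correct and, like the paper's, produces a characteristic measure by averaging pushforwards of $\nu$ over a finite set that $\Aut(X)$ permutes; but the bookkeeping is organized differently. The paper introduces the subgroup $\Pi\subseteq\mathrm{Sym}(\mathcal{J}(Y))$ of permutations realized by elements of $\Aut(X)$, fixes one representative $\phi(\pi)\in\Aut(X)$ for each $\pi\in\Pi$, averages $\phi(\pi)_*\nu$ over $\Pi$, and verifies invariance via the telescoping identity $\psi_*\phi(\pi)_*\nu=\phi(\alpha)_*\phi(\pi)_*\nu=\phi(\alpha\pi)_*\nu$. That identity implicitly rests on exactly the well-definedness fact you isolate and prove explicitly: the pushforward of $\nu$ along any conjugacy $Y\to Z$ depends only on $Z$, because $\nu$ is $\Aut(Y)$-characteristic. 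You instead equip each $Z\in\mathcal{J}(Y)$ with a canonical measure $\nu_Z$ and average over $\mathcal{J}(Y)$ itself, which dispenses with $\Pi$ and with the choice of automorphism representatives, and which charges all of $\mathcal{J}(Y)$ rather than only the $\Aut(X)$-orbit of $Y$ — so the two arguments generally yield different characteristic measures. Your version is a bit cleaner in that it surfaces the one genuinely non-routine ingredient (the $\Aut(Y)$-characteristic hypothesis forcing well-definedness of $\nu_Z$) as the central lemma, after which everything follows from the permutation action of $\Aut(X)$ on $\mathcal{J}(Y)$.
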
 
\begin{proof} 
If $Z\in\mathcal{J}(Y)$, then using the  topological conjugacy between $(Y,\sigma)$ and $(Z,\sigma)$  and pushing 
forward the characteristic measure on $\Aut(Y)$, it follows that $Z$ supports an $\Aut(Z)$-characteristic measure 
(in fact $\Aut(Y)\cong\Aut(Z)$ in this case).  
Let $\nu$ be an $\Aut(Y)$-characteristic measure supported on $Y$. 

Every element of $\Aut(X)$ induces a permutation on $\mathcal{J}(Y)$. Thus we can define the group 
$$ 
\Pi:=\{\pi\in\mathrm{Sym}(\mathcal{J}(Y))\colon\psi\text{ induces the permutation $\pi$ for some }\psi\in\Aut(X)\}. 
$$ 
For each $\pi\in\Pi$, choose an automorphism $\phi(\pi)\in\Aut(X)$ that induces the permutation $\pi$ on $\mathcal{J}(Y)$.  Finally define 
$$ 
\xi:=\frac{1}{|\Pi|}\sum_{\pi\in\Pi}\phi(\pi)_*\nu. 
$$ 
If $\psi\in\Aut(X)$ and $\alpha$ is the permutation on $\mathcal{J}(Y)$ induced by $\psi$, 
then $\phi(\alpha)^{-1}\psi$ induces the identity permutation on $\mathcal{J}(Y)$.  Thus $(\phi(\alpha)^{-1}\psi)_*\nu = \nu$, since $\phi(\alpha)^{-1}\psi$ restricted to $Y$ is an automorphism  of $Y$ and $\nu$ is $\Aut(Y)$ characteristic.  More generally, for any permutation $\pi\in\Pi$, we have that $\phi(\alpha)^{-1}\psi$ restricted to $\phi(\pi)(Y)$ is an automorphism of $\phi(\pi)(Y)$ and $\phi(\pi)_*\nu$ is a $\Aut(\phi(\pi)(Y))$ characteristic measure.  Thus, we have: 
\begin{eqnarray*} 
\psi_*\xi &= &\frac{1}{|\Pi|}\sum_{\pi\in\Pi}\psi_*\phi(\pi)_*\nu=
\frac{1}{|\Pi|}\sum_{\pi\in\Pi}\phi(\alpha)_*(\phi(\alpha)^{-1}\psi)_*\phi(\pi)_*\nu \\ 
& =  &
\frac{1}{|\Pi|}\sum_{\pi\in\Pi}\phi(\alpha)_*\phi(\pi)_*\nu 
= \frac{1}{|\Pi|}\sum_{\pi\in\Pi}\phi(\alpha\pi)_*\nu =\frac{1}{|\Pi|}\sum_{\pi\in\Pi}\phi(\pi)_*\nu  = \xi, 
\end{eqnarray*} 
where the 
penultimate equality holds because $\alpha\pi$ runs over every element of $\Pi$ exactly once as $\pi$ runs over $\Pi$.
\end{proof} 
This means, for example, that any subshift that contains periodic points has a characteristic measure, a fact already pointed out in Frisch and Tamuz~\cite{FT2}.  
A version of this fact about periodic points also appears
implicitly in Boyle and Krieger~\cite{BK}, in their defining and study of 
the gyration function.  But by making use of the main result in~\cite{FT2}, this also means that if $X$ has only finitely many minimal subsystems of topological entropy zero, then $X$ has a characteristic measure.

\subsection{Invariant measures for amenable actions}
One final tool for proving the existence of characteristic measures, as a 
classical result in the literature, is 
the Krylov-Bogolioubov Theorem for actions of amenable groups. An immediate application of it yields:
\begin{theorem}[Krylov-Bogolioubov Theorem; see~\cite{KB, bogo, anosov}]
Let $X$ be a subshift and suppose the countable, discrete group $\Aut(X)$ 
is amenable.  Then $X$ supports a characteristic measure. 
\end{theorem}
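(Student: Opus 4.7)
The plan is to invoke the standard Følner-averaging argument, which generalizes Krylov--Bogolioubov from $\Z$-actions to actions of countable amenable groups on compact metric spaces. By Theorem~\ref{th:CHL}, every element of $\Aut(X)$ is given by a sliding block code and therefore a homeomorphism of $X$, so $\Aut(X)$ acts on the compact metric space $X$ by homeomorphisms. Amenability of the countable discrete group $\Aut(X)$ supplies a Følner sequence $\{F_n\}_{n=1}^\infty$ of finite subsets with $|\phi F_n \triangle F_n|/|F_n|\to 0$ for every $\phi\in\Aut(X)$.

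First I would fix any Borel probability measure $\mu_0$ on $X$ (e.g.\ a point mass at an arbitrary $x\in X$, which exists because $X$ is nonempty and compact) and form the averaged measures
$$
\mu_n := \frac{1}{|F_n|}\sum_{\psi\in F_n}\psi_*\mu_0,
$$
each of which is a Borel probability measure on $X$. Since $X$ is a compact metric space, the space $\CM(X)$ of Borel probability measures is compact and metrizable in the weak-$*$ topology, so some subsequence $\mu_{n_k}$ converges weakly to a Borel probability measure $\mu$ supported on $X$.

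Next I would verify the invariance. For any $\phi\in\Aut(X)$, the signed measure $\phi_*\mu_n - \mu_n$ is a difference of two averages indexed by $\phi F_n$ and $F_n$, and its total variation is bounded above by $2|\phi F_n \triangle F_n|/|F_n|$, which tends to $0$ by the Følner property. Since pushforward by a homeomorphism is continuous on $\CM(X)$ in the weak-$*$ topology, passing to the limit along $n_k$ yields $\phi_*\mu = \mu$ for every $\phi\in\Aut(X)$. In particular, because $\sigma$ commutes with itself, $\sigma\in\Aut(X)$ and the measure is automatically $\sigma$-invariant, so $\mu$ is a characteristic measure for $(X,\sigma)$.

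There is no substantive obstacle here: the argument is the textbook amenable-group averaging proof of Krylov--Bogolioubov, and the only observations specific to our setting are that $\Aut(X)$ is countable, acts by homeomorphisms on the compact metric space $X$, and contains $\sigma$. The result is already attributed to the classical sources cited in the statement, and our role is simply to note how it specializes to the subshift context.
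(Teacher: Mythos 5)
Your proof is correct, and it is the standard Følner-averaging argument for the amenable version of Krylov--Bogolioubov. Note, however, that the paper itself gives no proof of this statement at all --- it is stated as an ``immediate application'' and attributed to the cited classical sources --- so there is no in-paper argument to compare against; you have simply supplied the textbook proof that the authors left implicit.

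One cosmetic remark: the observation that $\sigma\in\Aut(X)$ guarantees $\sigma$-invariance is not needed, since the paper's definition of characteristic measure asks only for invariance under all of $\Aut(X)$, and $\sigma$-invariance follows automatically; but including it does no harm.
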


\section{Language stable shifts}\label{section:main}
\subsection{Defining language stable shifts}
Recall that if $X$ is a subshift and $n\in\N$, then $\CM_n(X)$ denotes the set of minimal forbidden words of length $n$ in $\CL(X)$.  We make the following new definition. 

\begin{definition}\label{def:stable} 
A subshift $(X,\sigma)$ is {\em language stable} if the set 
$$ 
\{n\in\N\colon\CM_n(X)=0\} 
$$ 
has upper Banach density $1$. 
\end{definition} 
Note that any subshift of finite type is language stable, 
because in this case $\CM_n(X)=0$ for all but finitely many $n\in\N$.  
Moreover, any subshift $X$ can be approximated arbitrarily well by unforbidding enough of its forbidden words to make it satisfy Definition~\ref{def:stable} (see Section~\ref{sec:generic} for discussion of the metric on 
subshifts).  More precisely, let $X$ be any subshift and let $S\subseteq\N$ be any set with upper Banach density $1$.  Define 
$$ 
\CM:=\bigcup_{n\notin S}\CM_n(X). 
$$ 
Then $Y_{\CM}$ is language stable, $X$ is a subshift of $Y_{\CM}$, and by 
choosing $S$ to be very sparse we can ensure that $\CM(Y)\setminus\CM(X)$ 
is also very sparse. 

\subsection{Language stable shifts support a characteristic measure}
We use the tools of the last two sections to prove our main theorem: 
\begin{proof}[Proof of Theorem~\ref{thm:main}]
Assume that $X$ be a language stable subshift.  If $X$ is a subshift of finite type, then the result follows from Corollary~\ref{cor:sft-mme}.  Thus it suffices to assume that $X$ is not a subshift of finite type. 

Let $\{X_n\}_{n=1}^{\infty}$ be the SFT cover of $X$.  By Lemma~\ref{lem:mmes},  each $X_n$ supports at most finitely many measures of maximal entropy.  By Lemma~\ref{lem:measurable}, each $X_n$ carries an $\Aut(X_n)$-characteristic measure $\xi_n$.  Thus for each $n\in\N$, we have that $\xi_n$ is a $\sigma$-invariant measure supported on (in general, a proper subset of) $\A^{\Z}$.  By assumption, the set 
$$ 
S:=\{n\in\N\colon\CM_n(X)=0\} 
$$ 
has upper Banach density $1$.  Let $r\colon S\to\N$ be the function 
$$ 
r(s)=-s+\min\{t\notin S\colon t>s\} 
$$ 
defined 
to be the longest run of consecutive integers that lie in $S$, starting from $s$ (since $X$ is not an subshift of finite type, this run is well-defined).  
By assumption, the set $S$ has upper Banach density $1$, and so we can choose a subset 
$S^{\prime}\subseteq S$ along which $r(s)$ is strictly increasing. 

Since $\A^{\Z}$ is a compact metric space, it follows from the Banach-Alaoglu Theorem that the set $\{\xi_{s^{\prime}+r(s^{\prime})-1}\}_{s\in S^{\prime}}$ has weak* accumulation points in the set of all Borel measures on $\A^{\Z}$.  Let $\xi$ be one such accumulation point.  We claim that $\xi$ is a characteristic measure for $X$.  Passing from $S^{\prime}$ to $S^{\prime\prime}$ if necessary, we can assume the weak* limit of the sequence $\{\xi_{s^{\prime}+r(s^{\prime})-1}\}_{s^{\prime}\in S^{\prime}}$ exists. 

Let $\psi\in\Aut(X)$ be fixed.  By the Curtis-Hedlund-Lyndon Theorem (Theorem~\ref{th:CHL}), $\psi$ is a block code of some range $R\geq 0$.  Let $\Psi\colon\mathcal{L}_{2R+1}(X)\to\A$ be such that for all $x\in X$ and all $i\in\Z$ we have 
$$ 
(\psi x)_i=\Psi(x_{i-R},\dots,x_i,\dots,x_{i+R}). 
$$ 
If $R^{\prime}\geq R$, 
then $\psi$ is also a block code of range $R^{\prime}$ and  so we may assume (increasing the value of $R$ if necessary) that $R$ is the range for both $\psi$ and $\psi^{-1}$.  From hereon, we fix such $R$.  
By abusing notation, we now extend the domain of $\psi$ to include any element of $\A^{\Z}$ to which the range $R$ block code defining $\psi$ can be applied.  Since the function $r(s)$ is strictly increasing on $S^{\prime}$, there exists some $M$ such that $r(s^{\prime})\geq2R+1$ for all $s^{\prime}\in S^{\prime}$ satisfying $s^{\prime}>M$; let some such $s^{\prime}$ be fixed.  Notice that 
\begin{equation}\label{eq:string} 
X_{s^{\prime}}=X_{s^{\prime}+1}=\dots=X_{s^{\prime}+r(s^{\prime})-1} 
\end{equation}  
since these are all subshifts of finite type with identical sets of minimal forbidden words.  
Furthermore, since $X\subseteq X_{s^{\prime}+r(s^{\prime})-1}$ and there is no word of length at most $s^{\prime}+r(s^{\prime})-1$  forbidden in $X$ that was not also forbidden in $X_{s^{\prime}+r(s^{\prime})-1}$, it follows that 
$$ 
\mathcal{L}_{s^{\prime}+r(s^{\prime})-1}(X)=\mathcal{L}_{s^{\prime}+r(s^{\prime})-1}(X_{s^{\prime}+r(s^{\prime})-1}). 
$$ 
For each $w\in\mathcal{L}_{s^{\prime}+r(s^{\prime})-1}(X)$, applying the sliding block code $\Psi$ determines a word $\Psi(w)\in\mathcal{L}_{s^{\prime}+r(s^{\prime})-1-2R}(X)$.  This implies that 
$$ 
\mathcal{L}_{s^{\prime}+r(s^{\prime})-1-2R}(\psi(X_{s^{\prime}+r(s^{\prime})-1}))\subseteq\mathcal{L}_{s^{\prime}+r(s^{\prime})-1-2R}(X_{s^{\prime}+r(s^{\prime})-1-2R}) 
$$ 
and so $\psi(X_{s^{\prime}+r(s^{\prime})-1})\subseteq X_{s^{\prime}+r(s^{\prime})-1-2R}$.  
Since $r(s^{\prime})\geq2R+1$, it follows from this and~\eqref{eq:string} 
that
 $\psi(X_{s^{\prime}+r(s^{\prime})-1})\subseteq X_{s^{\prime}+r(s^{\prime})-1}$. 
 
 Applying the analogous argument with $\psi^{-1}$ instead of $\psi$ 
 shows that $\psi^{-1}(X_{s^{\prime}+r(s^{\prime})-1})\subseteq X_{s^{\prime}+r(s^{\prime})-1}$, and so 
$$ 
\psi(X_{s^{\prime}+r(s^{\prime})-1})=X_{s^{\prime}+r(s^{\prime})-1}. 
$$ 
In other words, $\psi\in\Aut(X_{s^{\prime}+r(s^{\prime})-1})$.  But by definition, $\xi_{s^{\prime}+r(s^{\prime})-1}$ is a $\Aut(X_{s^{\prime}+r(s^{\prime})-1})$-characteristic measure and so 
\begin{equation}\label{eq:char} 
\psi_*\xi_{s^{\prime}+r(s^{\prime})-1}=\xi_{s^{\prime}+r(s^{\prime})-1}. 
\end{equation} 
Since~\eqref{eq:char} holds for any $s^{\prime}\in S^{\prime}$ such that $s^{\prime}>M$, it follows that  
$$ 
\psi_*\xi=\psi_*\left(\lim_{s^{\prime}\in S^{\prime}}\xi_{s^{\prime}+r(s^{\prime})-1}\right)=\lim_{s^{\prime}\in S^{\prime}}\psi_*\xi_{s^{\prime}+r(s^{\prime})-1}=\lim_{s^{\prime}\in S^{\prime}}\xi_{s^{\prime}+r(s^{\prime})-1}=\xi. 
$$ 
Since this holds for any $\psi\in\Aut(X)$, we have that $\xi$ is a characteristic measure for $X$. 
\end{proof} 

Note that in the proof, the measure $\xi$ produced is a weak-* limit of the measures $\xi_n$ that are 
measures of maximal entropy on the shifts of finite type $X_n$.  For each 
$n\in\N$ we have $h_{\tp}(X_n)\geq h_{\tp}(X)$ (since $X\subseteq X_n$) and so $h_{\xi_n}(\sigma)\geq h_{\tp}(X)$.  By upper semi-continuity of the entropy map (see~\cite[Theorem 8.2]{W}), 
it follows that $h_{\xi}(\sigma)\geq h_{\tp}(X)$.  Since $\xi$ is supported on $X$ we have equality, so it is a measure of maximal entropy on $X$. 
 Therefore our proof shows that: 
\begin{corollary}
\label{cor:language-stable}
Any language stable shift has a characteristic measure that is a measure of maximal entropy. 
\end{corollary}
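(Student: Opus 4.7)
The approach is guided by the explicit construction of the characteristic measure in the proof of Theorem~\ref{thm:main}: the measure $\xi$ is obtained as a weak-* limit of measures $\xi_n$ supported on the SFT cover $\{X_n\}$. The plan is therefore to track the entropies of these approximating measures and pass them to the limit. First I would observe that each $\xi_n$ produced by the argument in Corollary~\ref{cor:sft-mme} is an average of ergodic measures of maximal entropy on $X_n$, so it is itself a measure of maximal entropy on $X_n$, giving $h_{\xi_n}(\sigma) = h_{\tp}(X_n)$. Monotonicity of topological entropy for subshifts, together with $X \subseteq X_n$, then yields the uniform lower bound $h_{\xi_n}(\sigma) \geq h_{\tp}(X)$.

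Second, I would apply upper semi-continuity of the entropy map $\mu \mapsto h_\mu(\sigma)$ on the space of $\sigma$-invariant Borel probability measures (see~\cite[Theorem 8.2]{W}). Evaluated along the subsequence indexed by $S^{\prime}$ used in the proof of Theorem~\ref{thm:main}, this gives $h_\xi(\sigma) \geq \limsup_{s^{\prime}} h_{\xi_{s^{\prime}+r(s^{\prime})-1}}(\sigma) \geq h_{\tp}(X)$.

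Third, I would verify that $\xi$ is supported on $X$, so that the matching upper bound $h_\xi(\sigma) \leq h_{\tp}(X)$ forces equality. Since $X = \bigcap_n X_n$ is a nested intersection of closed subsets of the compact space $\A^{\Z}$, a standard compactness argument shows that for every open neighborhood $V$ of $X$ there exists $N$ with $X_n \subseteq V$ for all $n \geq N$; hence $\xi_n(\overline{V}) = 1$ for such $n$, and the Portmanteau property of weak-* convergence yields $\xi(\overline{V}) \geq \limsup \xi_n(\overline{V}) = 1$. Applying this to the shrinking $1/k$-neighborhoods of $X$ and intersecting over $k$ gives $\xi(X) = 1$. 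Combining the three steps, $\xi$ is a characteristic measure of maximal entropy on $X$.

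The only step requiring any care is confirming that $\xi_n$ can be taken to be a measure of maximal entropy: this holds because Lemma~\ref{lem:measurable} is invoked in Corollary~\ref{cor:sft-mme} with $\mu$ already chosen to be an ergodic measure of maximal entropy, so every element of $\mathcal{I}(\mu)$ is also a measure of maximal entropy, and their convex average inherits the property. The remainder of the argument is a routine application of weak-* convergence together with upper semi-continuity of entropy, and I do not anticipate any serious obstacle.
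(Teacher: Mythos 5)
Your proposal is correct and follows the paper's own argument: the paper likewise observes that $\xi$ is a weak-* limit of measures of maximal entropy $\xi_n$ on the SFT cover, uses $X\subseteq X_n$ to get $h_{\xi_n}(\sigma)\geq h_{\tp}(X)$, invokes upper semi-continuity of the entropy map, and concludes by noting $\xi$ is supported on $X$. You supply two details the paper leaves implicit (that the average in Lemma~\ref{lem:measurable} is itself a measure of maximal entropy, and the Portmanteau argument that $\xi(X)=1$); both are correct, though the support claim also follows directly from $\xi(X_n)\geq\limsup_m\xi_m(X_n)=1$ for each fixed $n$ since each $X_n$ is closed.
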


\section{Genericity of language stable shifts}
\label{sec:generic}
We show that the set of language stable shifts is a dense $G_{\delta}$, with respect to the Hausdorff topology, in both the space of all subshifts 
with a fixed alphabet and in the subspace of positive entropy subshifts with a fixed alphabet.

We start by defining the distance between two subshifts over the same alphabet:  
if $\mathcal{A}$ is a finite alphabet and $X, Y\subseteq\mathcal{A}^{\Z}$ 
are two subshifts, define 
\begin{equation}
\label{def:distance}
d(X,Y):=2^{-\inf\{n\colon\CL_n(X)\neq\CL_n(Y)\}}. 
\end{equation} 
This metric gives the usual Hausdorff metric on the space of subshifts of $\A^{\Z}$.  Endowed with the metric~\eqref{def:distance}, the space of all subshifts of $\mathcal{A}^{\Z}$ is 
a compact metric space (recall that by definition, a subshift $X$ is a closed, $\sigma$-invariant subset of $\mathcal A^\Z$). 

Fixing a finite alphabet $\A$, a property of subshifts  is said to be {\em generic} if it holds for a dense $G_{\delta}$ subset of the space of all subshifts, in this topology.  A property is {\em generic among shifts of positive entropy} if it defines a dense $G_{\delta}$ (in the induced topology) in the subspace of subshifts of positive entropy.  
Let $S$ denote the set of all subshifts of $\mathcal{A}^{\Z}$ and 
let $S^+$ denote the set of all subshifts of $\A^{\Z}$ with positive topological entropy. 
For any $c> 0$, let $S^+_{\geq c}$ denote the set of all subshifts of $\A^\Z$ with topological entropy greater than or equal to $c$.  

Frisch and Tamuz~\cite{FT2} show that subshifts with zero entropy are generic in the space of all shifts, and that more generally subshifts with  entropy $c$ are generic in the space of all shifts with entropy at least $c$.  Along these lines, we prove that thee language stable subshifts are 
generic. We start by showing that these shifts are a $G_\delta$ subset: 
\begin{theorem}\label{th:stable-g-delta} 
The set of language stable subshifts in $S$ is a $G_{\delta}$ subset of $S$. 
\end{theorem}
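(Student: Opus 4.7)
The plan is to write $\mathrm{LS}$, the set of language stable subshifts in $S$, as a countable intersection of open sets. Using the characterization stated in Section~\ref{sec:background} that a subset of $\N$ has upper Banach density $1$ if and only if it contains arbitrarily long runs of consecutive integers, a subshift $X$ is language stable precisely when, for every $k \in \N$, there exists some $n \in \N$ with $\CM_n(X) = \CM_{n+1}(X) = \cdots = \CM_{n+k-1}(X) = \emptyset$. Consequently, defining
$$
U_k := \{X \in S : \exists\, n \in \N \text{ such that } \CM_{n+j}(X) = \emptyset \text{ for } 0 \le j \le k-1\},
$$
one has $\mathrm{LS} = \bigcap_{k=1}^\infty U_k$, and it suffices to prove that each $U_k$ is open in $S$.

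To verify openness, the key observation is that $\CM_n(X)$ is determined by $\CL_n(X)$: by definition, a word $w$ of length $n$ lies in $\CM_n(X)$ iff $w \notin \CL_n(X)$ and every proper subword of $w$ (of length at most $n-1$) lies in $\CL(X)$, both of which are decided by knowing $\CL_n(X)$. Hence if $X, Y \in S$ satisfy $\CL_i(X) = \CL_i(Y)$ for all $i \le N$, then $\CM_i(X) = \CM_i(Y)$ for all $i \le N$. Given $X \in U_k$, fix a witness $n_0$ so that $\CM_{n_0 + j}(X) = \emptyset$ for $0 \le j \le k - 1$. Setting $N := n_0 + k - 1$, the metric~\eqref{def:distance} gives that any $Y \in S$ with $d(X, Y) < 2^{-N}$ satisfies $\CL_i(X) = \CL_i(Y)$ for all $i \le N$, and therefore $\CM_{n_0 + j}(Y) = \emptyset$ for $0 \le j \le k - 1$ as well, so $Y \in U_k$. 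Thus $U_k$ is open.

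Since $U_k$ is open for each $k$ and $\mathrm{LS} = \bigcap_k U_k$, the set $\mathrm{LS}$ is a $G_\delta$ subset of $S$. I do not anticipate any real obstacle here; the argument hinges only on the fact that minimal forbidden words of length at most $N$ are determined by the language up to length $N$, which is immediate from the definition. The subtler work, establishing density of $\mathrm{LS}$ in $S$ and in $S^+_{\geq c}$, is presumably the content of the theorems that follow.
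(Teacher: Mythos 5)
Your proposal is correct and follows essentially the same route as the paper: both write the language stable shifts as $\bigcap_k U_k$ where $U_k$ asserts a run of $k$ consecutive lengths with no minimal forbidden words, and both rest on the fact that $\CM_i$ for $i\le N$ is determined by $\CL_N$, so membership in $U_k$ is stable under small perturbations in the metric~\eqref{def:distance}. The only cosmetic difference is that the paper exhibits $U_k$ as a union of explicit balls centered at subshifts of finite type, whereas you verify openness directly at each point; the content is the same.
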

\begin{proof} 
A subshift $X$ is language stable if and only if for all $k\in\N$ there exists $n_k\in\N$ such that 
$$ 
X_{n_k}=X_{n_k+1}=X_{n_k+2}=\dots=X_{n_k+k-1}, 
$$ 
where $\{X_n\}_{n=1}^{\infty}$ denotes the SFT cover of $X$, and  we  show that the set of subshifts in $S$ having this property is $G_{\delta}$. 

For each fixed $n\in\N$, note that 
$$ 
\mathcal{W}_n:=\{\mathcal{L}_n(X)\colon X\in S\} 
$$ 
is finite, as $\mathcal{L}_n(X)\subseteq\mathcal{L}_n(\A^{\Z})$ for any $X\in S$.  Enumerate the elements of $\mathcal{W}_n$ as $L_1^n,L_2^n,\dots,L_{|\mathcal{W}_n|}^n$.  For each $1\leq i\leq|\mathcal{W}_n|$, let $\mathcal{X}(i,n)$ denote the subshift of finite type whose set of forbidden words is $\mathcal{L}_n(\A^{\Z})\setminus L_i^n$.  For each $k$, let 
$$ 
\mathcal{B}(i,n,k):=\{Y\in S\colon d(\mathcal{X}(i,n),Y)<2^{-n-k+1}\}. 
$$ 
In other words, $Y\in\mathcal{B}(i,n,k)$ if and only if $\mathcal{L}_{n+k-1}(Y)=\mathcal{L}_{n+k-1}(\mathcal{X}(i,n))$.  For any such $Y$, it follows that $\mathcal{L}_{j}(Y)=\mathcal{L}_{j}(\mathcal{X}(i,n))$ for any $1\leq j\leq n+k-1$.  Fix such a $Y$ and let $\{Y_m\}_{m=1}^{\infty}$ be the SFT cover of $Y$.  Then, since $\mathcal{X}(i,n)$ is a subshift of finite type whose forbidden words all have length at most $n$, we have 

$$ 
Y_n=Y_{n+1}=Y_{n+2}=\dots=Y_{n+k-1}. 
$$ 
Since $\mathcal{B}(i,n,k)$ is open, the set 
$$ 
\mathcal{U}_k:=\bigcup_{n=1}^{\infty}\bigcup_{i=1}^{|\mathcal{W}_n|}\mathcal{B}(i,n,k) 
$$ 
is open.  Therefore the set 
$$ 
\LS :=\bigcap_{k=1}^{\infty}\mathcal{U}_k 
$$ 
is $G_{\delta}$.  

We claim $\LS$ is precisely the set of language stable shifts.  A subshift $Y\in \LS$ if and only if for all $k\geq 1$ there exists $n$ and $1\leq 
i\leq|\mathcal{W}_n|$ such that $Y\in\mathcal{B}(i,n,k)$.  We have already seen that the statement $Y\in\mathcal{B}(i,n,k)$ implies that 
$$ 
Y_n=Y_{n+1}=\dots=Y_{n+k-1}, 
$$ 
where $\{Y_n\}_{n=1}^{\infty}$ is the SFT cover of $Y$.  So if $Y\in \LS$ then for all $k$ there exists some $n_k$ such that 
$$ 
Y_{n_k}=Y_{n_k+1}=\dots=Y_{n_k+k-1}, 
$$ 
and this is equivalent to being language stable.  Thus $\LS$ is contained 
in the set of language stable shifts.  Conversely, for any language stable shift $Y$ and any $k\geq 1$, there exists $n_k$ such that 
$$ 
Y_{n_k}=Y_{n_k+1}=\dots=Y_{n_k+k-1}. 
$$ 
Let $i_k\in\{1,\dots,|\mathcal{W}_{n_k}|\}$ be the index for which $\mathcal{L}_{n_k}(Y)=L_i^{n_k}$.  Then $Y\in\mathcal{B}(i_k,n_k,k)\subseteq\mathcal{U}_k$.  Since this holds for any $k$, $Y\in\bigcap_k\mathcal{U}_k$, meaning $Y\in \LS$.  This proves the claim that the the $G_{\delta}$ set $\LS$ is equal to the set of language stable shifts. 
\end{proof} 

\begin{corollary}
The set of language stable subshifts in $S^+$ is a $G_{\delta}$ subset of 
$S^+$.  
More generally, for any $c \geq 0$, the set of language stable subshifts in $S^+_c$ is a $G_{\delta}$ subset of $S^+_c$. 
\end{corollary}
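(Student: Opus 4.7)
The plan is to deduce this corollary directly from Theorem~\ref{th:stable-g-delta} by invoking the standard subspace property of $G_\delta$ sets. Specifically, if $\LS \subseteq S$ is $G_\delta$ and $T \subseteq S$ is any subspace equipped with the induced topology, then $\LS \cap T$ is $G_\delta$ in $T$: indeed, writing $\LS = \bigcap_{k} \mathcal{U}_k$ with each $\mathcal{U}_k$ open in $S$, we have $\LS \cap T = \bigcap_k (\mathcal{U}_k \cap T)$, and each $\mathcal{U}_k \cap T$ is open in $T$ by definition of the subspace topology.

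Applying this first with $T = S^+$ gives that the set of language stable subshifts in $S^+$ is $\LS \cap S^+$, which is $G_\delta$ in $S^+$. Applying it again with $T = S^+_{\geq c}$ (the intended object, given the paper's earlier notation) yields the second claim immediately. The case $c = 0$, interpreted as $S^+_{\geq 0} = S$, recovers Theorem~\ref{th:stable-g-delta} itself.

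The only step that requires a brief remark is that the set of language stable shifts in a given subspace really is the intersection of $\LS$ with that subspace, which is immediate since language stability (Definition~\ref{def:stable}) is an intrinsic property of a subshift that does not depend on which ambient collection it is viewed inside. No obstacle arises; the corollary is a soft consequence of Theorem~\ref{th:stable-g-delta} together with the elementary fact that $G_\delta$-ness passes to subspaces.
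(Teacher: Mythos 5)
Your proof is correct and takes essentially the same approach as the paper: both deduce the corollary from Theorem~\ref{th:stable-g-delta} by noting that $G_\delta$-ness is preserved under passing to a subspace with the induced topology. Your version simply spells out the open-cover bookkeeping and the (correct) observation that language stability is intrinsic, which the paper leaves implicit.
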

\begin{proof} 
By definition of the induced topology, the intersection of $S^+$ (respectively $S^+_{\geq c}$) with any $G_{\delta}$ subset of $S$ is a $G_{\delta}$ subset of $S^+$ (respectively $S^+_{\geq c}$), so both the statements follow from Theorem~\ref{th:stable-g-delta}. 
\end{proof} 

\begin{theorem}\label{th:stable-dense} 
For any $c\geq 0$, the set of language stable subshifts in $S_{\geq c}^+$ 
is dense in $S_{\geq c}^+$. 
Analogously, the set of language stable subshifts in $S$ is dense in $S$ and the set of language stable subshifts in $S^+$ is dense in $S^+$.
\end{theorem}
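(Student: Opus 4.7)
The plan is SFT approximation. Since any subshift of finite type is language stable (as noted right after Definition~\ref{def:stable}), the SFT cover $\{X_n\}_{n=1}^\infty$ of any subshift $X$ supplies a canonical sequence of language stable approximants. The task then reduces to showing that $X_n\to X$ in the metric~\eqref{def:distance}, because once this is done each of the three density claims follows instantly from monotonicity of topological entropy: since $X\subseteq X_M$, we have $h_{\tp}(X_M)\geq h_{\tp}(X)$, so $X_M$ lies in whichever of $S$, $S^+$, or $S^+_{\geq c}$ the shift $X$ does.

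The key technical step is the identity $\mathcal{L}(X)=\bigcap_{M\geq 1}\mathcal{L}(X_M)$. The forward inclusion is immediate from $X\subseteq X_M$. For the reverse, given a word $w$ belonging to every $\mathcal{L}(X_M)$, I would choose for each $M$ a point $y_M\in X_M$ with $w$ at position $0$ (possible after shifting), extract a convergent subsequence $y_{M_k}\to y^*$ by compactness of $\A^\Z$, and then use the nested inclusions $X_{M_k}\subseteq X_{M_0}$ (valid once $M_k\geq M_0$) together with the closedness of each $X_{M_0}$ to conclude that $y^*\in X_{M_0}$ for every $M_0$, hence $y^*\in\bigcap_{M_0}X_{M_0}=X$. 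The word $w$ still appears at position $0$ in $y^*$, so $w\in\mathcal{L}(X)$.

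From this identity, for each $K\in\N$ the finiteness of the set of words of length at most $K$ over $\A$ allows us to pick a single $M=M(K)$ such that every word of length $\leq K$ absent from $\mathcal{L}(X)$ is already absent from $\mathcal{L}(X_M)$. Combined with $X\subseteq X_M$, this yields $\mathcal{L}_n(X_M)=\mathcal{L}_n(X)$ for all $n\leq K$, and hence $d(X,X_M)\leq 2^{-(K+1)}$. Letting $K\to\infty$ we can approximate $X$ arbitrarily well by a language stable subshift belonging to the same entropy stratum as $X$, which gives the three density claims simultaneously. The only nontrivial ingredient is the compactness argument establishing the $\mathcal{L}$-intersection identity; the remainder is entropy monotonicity and finite counting.
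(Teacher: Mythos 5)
Your proposal is correct and follows the same strategy as the paper: approximate $X$ by the terms of its SFT cover, which are language stable, and observe that entropy cannot decrease when passing to a superset. The only place you diverge is in how you verify that $X_N\to X$ in the metric~\eqref{def:distance}. You prove $\mathcal{L}(X)=\bigcap_{M\geq 1}\mathcal{L}(X_M)$ by a compactness/diagonal extraction argument and then use finiteness of $\A^{\leq K}$ to extract a threshold $M(K)$. The paper avoids this detour by observing directly that $\mathcal{L}_n(X_N)=\mathcal{L}_n(X)$ for every $n\leq N$: a word $w$ with $|w|\leq N$ fails to lie in $\mathcal{L}(X)$ precisely when it contains some minimal forbidden word $u\in\CM(X)$, and since $|u|\leq|w|\leq N$ this $u$ is already among the words forbidden in $X_N$. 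Thus $w\notin\mathcal{L}(X_N)$ as well; combined with $X\subseteq X_N$ this gives equality, and hence $d(X,X_N)\leq 2^{-(N+1)}$ with the explicit choice $M(K)=K$. Your argument is valid, but the compactness step (and hence the implicit, unquantified $M(K)$) is unnecessary given the structure of the SFT cover; the direct computation is both shorter and yields an explicit rate of approximation.
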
 
\begin{proof} 
Let $\mathcal{U}$ be an arbitrary nonempty, open subset of $S_{\geq c}^+$ and fix some $X\in\mathcal{U}$.  Let $\{X_n\}_{n=1}^{\infty}$ be the SFT cover of $X$.  Then, by definition of the metric, there exists $N$ such that 
$$ 
\{Y\in S^+\colon\mathcal{L}_N(Y)=\mathcal{L}_N(X)\}\subseteq\mathcal{U}. 
$$ 
For such an $N$ notice that $X_N$ is language stable (since it is a shift of finite type), that $\mathcal{L}_N(X)=\mathcal{L}_N(X_N)$, and that 
\begin{equation}
\label{eq:entropy-ineq}
h_{\tp}(X_N)\geq h_{\tp}(X)\geq c, 
\end{equation} since $X\subseteq X_N$.  
Therefore $X_N\in\mathcal{U}$.  Since $\mathcal{U}$ was arbitrary, the set of language stable subshifts in $S_{\geq c}^+$ is dense in $S_{\geq c}^+$ for any $c\geq 0$.

For the analogous results for $S$ and $S^+$, the only small modification is that in inequality~\eqref{eq:entropy-ineq} 
we only have $h_{\tp}(X)\geq0$ in $S$ and $h_{\tp}(X)>0$ in $S^+$.  
\end{proof} 

Combining the results of this section, we get: 

\begin{corollary}
The set of language stable subshifts of $S$ is generic in $S$, the set of 
language stable subshifts of $S^+$ is generic in $S^+$, and for any $c\geq0$ the set of language stable subshifts of $S_{\geq c}^+$ is generic in $S_{\geq c}^+$. 
\end{corollary}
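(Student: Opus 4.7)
The statement is a formal combination of results already proved in this section, so the plan is simply to assemble them. Recall that a property was defined to be generic in one of the three spaces $S$, $S^+$, or $S^+_{\geq c}$ exactly when the set of subshifts satisfying it forms a dense $G_{\delta}$ in that space (with respect to the metric defined in~\eqref{def:distance}, equivalently the Hausdorff topology). So my plan is to exhibit language stability as a dense $G_{\delta}$ in each of the three spaces by citing the two theorems just proved.

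First I would handle $S$ directly. Theorem~\ref{th:stable-g-delta} shows that the set $\LS$ of language stable subshifts is a $G_{\delta}$ subset of $S$, and the second assertion of Theorem~\ref{th:stable-dense} shows that $\LS$ is dense in $S$. Together these give that $\LS$ is a dense $G_{\delta}$ in $S$, hence language stability is generic in $S$.

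Next I would treat $S^+$ and $S^+_{\geq c}$ simultaneously. Denote either space by $T$, regarded as a topological subspace of $S$. The corollary following Theorem~\ref{th:stable-g-delta} states that $\LS \cap T$ is a $G_{\delta}$ in $T$ (this is just the observation that the intersection of a $G_{\delta}$ subset of $S$ with the subspace $T$ is a $G_{\delta}$ in $T$, by the definition of the induced topology). The first and third assertions of Theorem~\ref{th:stable-dense} give density of $\LS \cap T$ in $T$. Combining these yields that $\LS \cap T$ is a dense $G_{\delta}$ in $T$, so language stability is generic in $T$.

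There is no real obstacle here; the substantive work has already been done, and the corollary just amounts to unpacking the definition of generic and quoting the two preceding results in each of the three cases.
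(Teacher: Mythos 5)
Your proposal is correct and matches the paper's (implicit) argument exactly: the paper simply states the corollary as a combination of Theorem~\ref{th:stable-g-delta}, its subsequent corollary, and Theorem~\ref{th:stable-dense}, which is precisely the assembly you carry out. Nothing further is needed.
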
 

\section{The example}
\label{sec:example}
\subsection{Language stability gives rise to new shifts with characteristic measures}
In this section, we construct a language stable subshift that carries a characteristic measure that cannot be seen to exist for any of the four reasons given in the introduction.  In addition, we show that this characteristic measure is a measure of maximal entropy.
Specifically, we  build a language stable subshift $(W, \sigma)$ with the 
following properties: 
	\begin{enumerate} 
	\item The automorphism group of the subshift $W$ is nonamenable. 
	\item The subshift $W$ and each of its nonempty subsystems has positive topological entropy. 
	\item Each characteristic measure supported on $W$ is measurably isomorphic to infinitely many other measures also supported on $W$. 
	\item Each closed, proper subshift $W'\subset W$ either is topologically conjugate to infinitely many other subshifts 
	of $W$ (meaning Lemma~\ref{lem:topological} does not apply to it), or has strictly lower topological entropy than $W$ (meaning that even if Lemma~\ref{lem:topological} could be applied to it, the resulting measure would not 
	be a measure of maximal entropy on $W$, and therefore would not be the measure obtained from Corollary~\ref{cor:language-stable}). 
		\end{enumerate} 
In summary, $W$ carries a characteristic measure that is a measure of maximal entropy (by Corollary~\ref{cor:language-stable}) and the existence of this measure is not implied by any of the four previously known methods of finding a characteristic measure.
	
Our method is to build the shift $W$  as a product $W = X\times Y\times 
Z$ of three subshifts, and the properties of two of these subshifts may be of independent interest.  The shift $X$, described in Section~\ref{sec:X}, is a language stable, positive entropy, minimal subshift.  
The shift $Y$ is the full shift on $2$ symbols and its properties are described in Section~\ref{sec:Y}. 
The shift $Z$, described in Section~\ref{sec:Z},  has countably many ergodic measures, all of which are  isomorphic to each other, and in Section~\ref{sec:example-summary} we check that the product system $W$ is language stable and has the properties described above. 
In the constructions of each of these component systems, we need to take care of how they interact with each other, both to guarantee that the resulting system is language stable and to ensure that the existence of a characteristic measure for the constructed system is not covered by any of the previously known methods. 

Before giving our example, we reiterate that $W$ is language stable and so has a characteristic measure by Corollary~\ref{cor:language-stable}.  One might wonder if there is an easier way to see this and so we indicate why what may seem like a natural approach to finding a characteristic measure on $W$ is not viable without significant extra information. 
 It is tempting to try to find a characteristic measure for $W$ by analyzing $X$, $Y$, and $Z$ separately, finding an $\Aut(X)$-characteric measure $\alpha$, an $\Aut(Y)$-characteristic measure $\beta$, and an $\Aut(Z)$-characteristic measure $\gamma$, and guessing that $\alpha\times\beta\times\gamma$ may be an $\Aut(W)$-characteristic measure.  
However, a recent result of Salo and Schraudner~\cite{salo-schraudner} shows that the automorphism group of a product of systems may be much larger than the product of their individual automorphism groups.  Specifically, they show that if $X\subseteq\{0,1\}^{\Z}$ is the {\em sunny side up shift}, 
consisting of all configurations with at most one $1$, then $\Aut(X)\cong\Z$ (in fact it is generated by the shift). 
On the other hand, they showed that $\Aut(X\times X)\cong(\Z^{\infty}\rtimes S_{\infty})\rtimes(\Z^2\rtimes S_2)$.  
Thus, returning to our example, although $\alpha\times\beta\times\gamma$ is certainly invariant under the subgroup $\Aut(X)\times\Aut(Y)\times\Aut(Z)\subseteq\Aut(X\times Y\times Z)$, there is no reason to believe it is 
$\Aut(W)$-invariant without fully describing the algebraic structure of $\Aut(W)$, which may be significantly larger.

\subsection{There exists a language stable, positive entropy, minimal subshift}
\label{sec:X}

\begin{lemma}\label{lem:build}
Let $S$ be a forward transitive subshift of finite type whose minimal forbidden words all have length at most $N$ and let $\varepsilon>0$.  For any $m\in\N$, and any sufficiently large $d\in\N$ we have
\begin{equation}
\label{eq:shiftS}
S^{\prime}:=\{x\in S\colon\text{every subword of $x$ of length $d$ contains every word in $\mathcal{L}_m(S)$}\}
\end{equation}
is a forward transitive subshift of finite type and $h_{\tp}(S^{\prime})\geq h_{\tp}(S)-\varepsilon$.
\end{lemma}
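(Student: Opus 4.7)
Let $h := h_{\tp}(S)$. The plan has three parts: verify $S'$ is an SFT (routine, since it is defined by forbidding the minimal forbidden words of $S$ together with the finitely many length-$d$ words that fail to contain some $u \in \mathcal{L}_m(S)$); establish the entropy lower bound via an explicit construction; and prove topological transitivity by a counting argument in $S$.

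Two preliminary facts feed the argument. Since $S$ is a transitive SFT there is a transitivity constant $L$ such that any two words in $\mathcal{L}(S)$ can be joined by some connector of length $\leq L$ to remain in $\mathcal{L}(S)$. Moreover, for each $u \in \mathcal{L}_m(S)$ the subshift $S_u := \{x \in S : u \text{ does not appear in } x\}$ is a proper subshift of $S$, and since the Parry measure on $S$ is the unique measure of maximal entropy and has full support, $h_{\tp}(S_u) < h$. Set $c := h - \max_{u \in \mathcal{L}_m(S)} h_{\tp}(S_u) > 0$; this entropy gap drives the two remaining steps.

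For the entropy lower bound, let $T_\ell := \{w \in \mathcal{L}_\ell(S) : w \text{ contains every } u \in \mathcal{L}_m(S)\}$. A union bound combined with the gap $c$ gives $|T_\ell| \geq e^{\ell(h - \varepsilon/2)}$ for all $\ell$ sufficiently large, since the number of length-$\ell$ words in $S$ avoiding some $u$ is at most $|\mathcal{L}_m(S)| \cdot e^{\ell(h-c)+o(\ell)}$ while $|\mathcal{L}_\ell(S)| = e^{\ell h + o(\ell)}$. Fixing such an $\ell$ and setting $d := 2\ell + L$, I consider concatenations $T^{(1)} c_1 T^{(2)} c_2 \cdots T^{(r)} c_r$ with $T^{(i)} \in T_\ell$ and connectors $c_i$ of length $\leq L$. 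These lie in $\mathcal{L}(S)$ by transitivity, and since $d \geq 2\ell + L$, every length-$d$ window of such a concatenation contains at least one complete block $T^{(i)}$ and hence every element of $\mathcal{L}_m(S)$; so they lie in $\mathcal{L}(S')$. Counting yields $h_{\tp}(S') \geq \frac{\log|T_\ell|}{\ell + L} \geq h - \varepsilon$ once $\ell$ is taken large enough.

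For topological transitivity, I will show that for any $u, v \in \mathcal{L}(S')$ there is some $w$ with $uwv \in \mathcal{L}(S')$. My plan is a Perron-Frobenius counting argument: for $n$ in the appropriate residue class modulo the period of $S$, the irreducibility of the adjacency matrix of $S$ gives that the number of $w \in \mathcal{A}^n$ with $uwv \in \mathcal{L}(S)$ is $\asymp \lambda^n$ (with $\lambda = e^h$), with constants uniform in the terminal state of $u$ and initial state of $v$. For each length-$d$ window position inside $uwv$ and each $u_j \in \mathcal{L}_m(S)$, the number of such $w$ forcing that window into $\mathcal{L}_d(S_{u_j})$ is bounded by $\lambda^{n-d} \lambda_{u_j}^d \lesssim \lambda^n e^{-cd}$ via a transfer-matrix bound restricted to $S_{u_j}$, uniformly over window positions including those straddling the $u$-$w$ and $w$-$v$ boundaries. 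Summing over the $O(n+d)$ window positions and the $|\mathcal{L}_m(S)|$ values of $u_j$ bounds the number of ``bad'' $w$ by $O((n+d) e^{-cd}) \lambda^n$, which is strictly less than the total count once $d$ is chosen sufficiently large; a good $w$ therefore exists. The main obstacle I anticipate is verifying the uniformity of these Perron-Frobenius and transfer-matrix constants and adapting them to the straddling window positions, where part of the window's content is prescribed by $u$ or $v$, rather than freely chosen; this is the delicate ingredient that must be carried through to complete the proof.
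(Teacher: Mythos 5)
Your proof is correct for the SFT-hood and the entropy bound, and for the latter you take a genuinely different and cleaner route than the paper. The paper's entropy argument runs through a two-step detour (Step~1 uses Shannon--McMillan--Breiman and the pointwise ergodic theorem to build a high-entropy shift $Y_k$ avoiding one long periodic word; Step~2 then concatenates blocks from $Y_k$ padded by a universal word). Your observation that entropy-minimality of a transitive SFT (via uniqueness and full support of the Parry measure) gives $h_{\tp}(S_u) < h$ for each $u$, hence a uniform entropy gap $c > 0$, after which a union bound shows $|T_\ell|$ is exponentially large, replaces all of Step~1 and is shorter. The concatenation-of-$T_\ell$-blocks step (with $d = 2\ell + L$) matches the paper's Step~2 in spirit and is fine: every length-$d$ window does contain a complete block, and free concatenation follows from the Bowen constant once $\ell \geq N$. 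You would still need to fix connector lengths (or pass to a fixed residue class mod the period of $S$) so that distinct concatenations have coherent lengths, but that is a routine repair.

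The transitivity argument, however, has a genuine gap that is more than the ``uniformity of constants'' concern you flag, and you should not expect the counting to go through as written. The issue is the straddling windows with small free part. Consider the window with $k$ characters fixed from $u$ (or $v$) and only $d - k$ characters coming from $w$. The transfer-matrix bound for the number of $w$ that keep this window inside $\mathcal{L}(S_{u_j})$ is not $\lambda^{n}e^{-cd}$ but rather $\lambda^{n}e^{-c(d-k)}$: only the free coordinates pay the exponential penalty. Summing over $k=1,\dots,d-1$ yields $\lambda^n\sum_{j=1}^{d-1}e^{-cj}\leq \lambda^n\cdot e^{-c}/(1-e^{-c})$, a constant multiple of the total count \emph{independent of $d$}. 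Worse, at $k = d-1$ the window depends on only a single free symbol, so whether the window can be saved at all depends delicately on the last $m-1$ symbols of $u$; the set of ``bad'' continuations can be a positive fraction of all continuations (indeed, one saving symbol always exists because $u\in\mathcal L(S')$ extends inside some $x\in S'$, but this is a single symbol, not a large set), and these constraints for different $k$ must be satisfied simultaneously. A first-moment union bound cannot absorb this boundary effect by making $d$ large. The paper instead proves transitivity by an overlap-gluing argument: extend $a$ and $b$ inside $\mathcal L(S')$ to $a'$, $b'$ of length at least $|a|+d$ and $|b|+d$, arrange that the rightmost length-$m$ subword of $a'$ equals the leftmost length-$m$ subword of $b'$, and glue using Bowen's criterion (Proposition~\ref{prop:bowen}, with $m\geq g$). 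Some care is still needed there for the length-$d$ windows that straddle the splice, but the gluing framework localizes the difficulty to a single splice point rather than spreading it over $O(d)$ window offsets, and is the more promising route. You should abandon the counting approach for transitivity and glue instead, being explicit about how the windows near the splice are controlled.
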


\begin{proof}
If $S$ contains only a single periodic point, then the lemma is trivially 
true.  Thus without loss,  we can assume that $S$ contains at least two distinct periodic points.  

For a fixed $m\in\N$, $S^{\prime}$ is the subshift of finite type obtained by forbidding all the minimal forbidden words in $S$, as well as all words in $\mathcal{L}(S)$ of length $d$ that omit at least one element of $\mathcal{L}_m(S)$.  
Since $S$ is a subshift of finite type, by Proposition~\ref{prop:bowen} there exists $g\in\N$ such that if $u,v,w\in\mathcal{L}(S)$ are such that $uv,vw\in\mathcal{L}(S)$ and $|v|\geq g$, then $uvw\in\mathcal{L}(S)$.  
For the remainder of the proof, we assume that $m\geq g$.  

If $d\geq m$  is fixed and the shift
 $S^{\prime}$ is defined by~\eqref{eq:shiftS}, then this shift is nonempty for sufficiently large $d$.
Fixing such $d$, for any $a,b\in\mathcal{L}(S^{\prime})$ we can
find words $a^{\prime},b^{\prime}\in\mathcal{L}(S^{\prime})$ such that:
$|a^{\prime}|\geq|a|+d$, $|b^{\prime}|\geq|b|+d$, $a$ is the leftmost subword of $a^{\prime}$, $b$ is the
rightmost subword of $b^{\prime}$, and the rightmost subword of length $m$ in $a^{\prime}$ coincides with the leftmost subword of length $m$ in $b^{\prime}$.  Note that we can do this because all words in $\mathcal{L}(S^{\prime})$
can be legally extended arbitrarily far to the right and left (in at least one way) and all words of length $d$ in $S^{\prime}$ contain a copy of every word in $\mathcal{L}_m(S)$.  Therefore,
setting $b^{\prime\prime}$ to be the word obtained by omitting the leftmost $m\leq d$ letters of $b^{\prime}$, we
have that $a^{\prime}b^{\prime\prime}\in\mathcal{L}(S^{\prime})$ and $a^{\prime}b^{\prime\prime}=acb$ for some $c\in\mathcal{L}(S^{\prime})$.  Thus for any two words $a,b\in\mathcal{L}(S^{\prime})$, there exists $c\in\mathcal{L}(S^{\prime})$ such that $acb\in\mathcal{L}(S^{\prime})$ and so 
$S^{\prime}$ is forward transitive.

We are left with showing that given $\varepsilon> 0$, we can choose $d\in\N$ sufficiently large such that
$h_{\tp}(S^{\prime})\geq h_{\tp}(S)-\varepsilon$.  We carry this out in two stages. 

\subsubsection*{Step 1: we build a high entropy shift missing only one word} 
Let $y$ be a periodic point in $S$ of minimal period $p$ and choose $w\in\mathcal{L}_p(X)$ such that $y=\dots wwww\dots$.  Since $S$ is a forward transitive shift of finite type that contains at least two distinct periodic points, we can find a word $q$, whose length is a multiple of $|w|$, that neither begins nor ends with the word $w$, and\ is such that $wqw\in\mathcal{L}(S)$.  Since $|w|\geq m$ it follows that if $u,v\in\mathcal{L}(S)$ are such that both $uw,wv\in\mathcal{L}(S)$, then by Proposition~\ref{prop:bowen} we have $uwqwv\in\mathcal{L}(S)$.

Let $\mu$ be the Parry measure on $S$ and note that $\mu$ is an ergodic and nonatomic measure satisfying $h_{\mu}(\sigma)=h_{\tp}(S)$ (see~\cite{parry}). 
By the Shannon-McMillan-Breiman Theorem, for $\mu$-almost every $z\in S$ we have
$$
\lim_{n\to\infty}-\frac{1}{2n+1}\log\mu([z_{-n}\dots z_0\dots z_n])=h_{\mu}(\sigma)=h_{\tp}(S).
$$
Thus for any $\delta>0$, there exists $N\in\N$ and a set $Q\subseteq S$ such that $\mu(Q)>1-\delta/2$ and such that 
$$
\left|-\frac{1}{2n+1}\log\mu([z_{-n}\dots z_0\dots z_n])-h_{\mu}(\sigma)\right|<\delta
$$
for all $z\in Q$ and all $n\geq N$.  For any fixed $k>\frac{|q|}{|w|}+3$ (recall that the words $q$ and $w$ are defined in the beginning of this step), 
let $w(k):=\underbrace{ww\dots w}_{\text{$k$ times}}$.  By the Pointwise Ergodic Theorem, we have
$$
\lim_{n\to\infty}\frac{1}{2n+1}\sum_{i=-n}^n1_{[w(k)]}(\sigma^iz)=\mu\left([w(k)]\right)
$$
for $\mu$-almost every $z\in S$.  Therefore there exists $M\in\N$ and a set $R\subseteq S$ such that $\mu(R)>1-\delta/2$ and 
$$
\left|\frac{1}{2n+1}\sum_{i=-n}^{n-|w|-1}1_{[w(k)]}(\sigma^iz)-\mu\left([w(k)]\right)\right|<\delta
$$ 
for all $z\in R$ and all $n\geq M$.  Taking the maximum of $N$ and $M$, without loss of generality we can assume that $N=M$.  Then $\mu(Q\cap R)>1-\delta$ and 
for any $z\in Q\cap R$, we have that for all $n\geq N$, 
\begin{align}
\label{eq:upper-z}
\exp\bigl(-(2n+1)\cdot(h_{\tp}(S)+\delta)\bigr)  & <\mu([z_{-n}\dots z_0\dots z_n])  \\ 
&
\nonumber  < \exp\bigl(-(2n+1)\cdot(h_{\tp}(S)-\delta)\bigr)
\end{align}
and 
\begin{equation*}
\mu([w(k)])-\delta<\frac{1}{2n+1}\sum_{i=-n}^{n-|w|-1}1_{[w(k)]}(\sigma^iz)<\mu([w(k)])+\delta. 
\end{equation*}
Let $\mathcal{W}(n,k)\subseteq\mathcal{L}_{2n+1}(S)$ denote 
the set of words of the form $z_{-n}\dots z_0\dots z_n$ for some $z\in Q\cap R$ and fix some $n\geq N$.  
Since $\mu(Q\cap R)>1-\delta$, using the upper bound for $\mu([z_{-n}\dots z_0\dots z_n])$ 
given in~\eqref{eq:upper-z}, for each $z\in Q\cap R$ there are at least
$$
\frac{1-\delta}{\exp\bigl(-(2n+1)\cdot(h_{\tp}(S)-\delta)\bigr)}
$$
cylinder sets based on words of length $2n+1$ in $\mathcal{L}(S)$ for which the number of occurrences of $w(k)$ as a subword of them lies between $(2n+1)(\mu([w(k)])-\delta)$ and $(2n+1)(\mu([w(k)])+\delta)$.

Recall that the word $w(k)$ is a periodic self-concatenation of the word $w$ and recall the definition of the word $q$ from the first paragraph of Step 1.  Define the modified word $\tilde{w}(k)$ to be the word 
$$
\tilde{w}(k):=wwq\hspace{-0.125 in}\underbrace{ww\cdots w}_{k-\frac{|q|}{|w|}-2\text{ times}}
$$
which
is just $w(k)$ with each occurrence of $w$ starting with the third  through the $(|q|/|w|+2)^{nd}$ occurrence  changed to the word $q$.  
 Note that in any element of $S$ and any occurrence of $w(k)$ in that element, the modified sequence that replaces this occurrence of $w(k)$ with $\tilde{w}(k)$ also 
determines an element of $S$, by choice of $q$.  Moreover, making this replacement cannot introduce new occurrences of the word $w(k)$, by the Fine-Wilf Theorem~\cite{FW} and the fact that $q$ neither begins nor ends with $w$, because $\tilde{w}(k)$ begins and ends with two consecutive occurrences of $w$.  Therefore, for any fixed $k$ and any element of $S$ we can produce an element of $S$ that does not contain $w(k)$ as a subword, by enumerating all occurrence of $w(k)$, inductively modifying the next occurrence that remains at each stage $\tilde{w}(k)$, and taking a limit of the sequence of elements of $S$ obtained at each stage. 
Let $Y_k\subseteq S$ be the subshift obtained by forbidding the word $w(k)$.  Notice that if $u\in\mathcal{L}(S)$ does not contains $w(k)$ as a subword, then either $u\in\mathcal{L}(Y_k)$ or every element of $S\cap[u]$ contains an occurrence of $w(k)$.  The latter is only possible if all elements of $S\cap[u]$ have an occurrence of $w(k)$ that overlaps the word $u$ in the location determined by the cylinder set (otherwise modify occurrences of $w(k)$ to $\tilde{w}(k)$ as described previously).  Such a word can be modified at most once on its right and once on its left so that these occurrences of $w(k)$ are instead the word $\tilde{w}(k)$.  Now, any element of $\mathcal{L}(S)$ can be turned  
 into an element fo $\mathcal{L}(S)$ that does not contain $w(k)$ as a subword using the procedure described above (enumerating occurrences of $w(k)$ that occur within it beginning from the left and inductively modifying the next that remains be $\tilde{w}(k)$ until we reach the end of the word).  Therefore for any $n$ there is a map from $\mathcal{L}_{2n+1}(S)$ to $\mathcal{L}_{2n+1}(Y_k)$ that is at most $(2^{2+(2n+1)(\mu([\tilde{w}(k)])+\delta)})$-to-one when restricted to the words arising from the restriction of elements of $Q\cap R$ to the set $\{-n,\dots,0,\dots,n\}$ (first removing all occurrences of $w(k)$ and then making at most once change on the right and on the left if all elements of $S\cap[u]$ contain an occurrence of $w(k)$ that intersects $u$).  For fixed $\delta>0$ and all sufficiently large $n$, this map is at most $(2^{(2n+1)(\mu([\tilde{w}(k)])+2\delta)})$-to-one.

We claim that $h_{\tp}(Y_k)$ can be made arbitrarily close to $h_{\tp}(S)$ by taking $k$ sufficiently large.  To see this note that for any $n\geq N$, the number of words in $\mathcal{L}_{2n+1}(Y_k)$ is at least
$$
\frac{\left(\frac{1-\delta}{\exp(-(2n+1)\cdot(h_{\tp}(S)-\delta))}\right)}{2^{(2n+1)(\mu([\tilde{w}(k)])+2\delta)}}
$$
since the map described above taking a word in $\mathcal{L}(S)$ to a word in $\mathcal{L}(Y_k)$ 
is at most $(2^{(2n+1)\mu([\tilde{w}(k)])+2\delta})$-to-one on the words arising from the restriction of elements of $Q\cap R$ to the set $\{-n,\dots,0,\dots,n\}$.  By taking $k$ sufficiently large we can guarantee 
that $\mu([w(k-|q|/|w|)])$ is arbitrarily close to zero, and so $\mu([\tilde{w}(k)])$ is also (since it contains $w(k-|q|/|w|)$ as its rightmost subword).  Therefore the exponential growth rate of $|\mathcal{L}_{2n+1}(Y_k)|$ is at least $h_{\tp}(S)-3\delta$ for any sufficiently large $k$.

\subsubsection*{Step 2: we use $Y_k$ to define $S^{\prime}$} 
Choose a word $w\in\mathcal{L}(S)$ that contains every element of $\mathcal{L}_m(S)$ as a subword.  
For each $k\geq 1$, we use this word $w$ in the construction of $Y_k$.  
Fixing $\varepsilon>0$, we can choose $k$ sufficiently large  such that $h_{\tp}(Y_k)>h_{\tp}(S)-\varepsilon/4$.  
Choose $N$ such that for all $n\geq N$, we have $\log P_X(n)>n(h_{\tp}(S)-\varepsilon/2)$. 
 Since $Y_k$ is a forward transitive shift of finite type, by Proposition~\ref{prop:bowen} 
there exists a bound $B$ such that for any $v\in\mathcal{L}(Y_k)$ there are 
words $g_{w,v},h_{w,v}\in\mathcal{L}(Y_k)$ of length $|g_{w,v}|,|h_{w,v}|\leq B$ and such that $wg_{w,v}vh_{w,v}w\in\mathcal{L}(Y_k)$.  
 Choose one such $g_{w,v}$ and $h_{w,v}$ for each $v\in\mathcal{L}(Y_k)$.  Then there exist $b_1,b_2\leq B$ such that 
$$ 
\lim_{n\to\infty}\frac{1}{n}\log\left|\left\{v\in\mathcal{L}_n(Y_k)\colon|g_{w,v}|=b_1\text{ and }|h_{w,v}|=b_2\right\}\right|>h_{\tp}(S)-\varepsilon/4. 
$$
Define 
$$ 
\mathcal{W}_n:=\left\{wg_{w,v}vh_{w,v}\in\mathcal{L}_n(Y_k)\colon|g_{w,v}|=b_1\text{ and }|h_{w,v}|=b_2\right\}. 
$$ 
Without loss of generality, increasing $N$ if necessary, we can assume that $\left|\mathcal{W}_n\right|>n(h_{\tp}(S)-\varepsilon/2)$ for all $n\geq N$.  By Proposition~\ref{prop:bowen}, the elements of $\mathcal{W}_n$ can be freely concatenated. 

For each fixed $n\in\N$, note that the shift $S^{\prime}$ defined using 
$d:=|w|+b_1+b_2+n$ contains all elements of $S$ that 
can be written as concatenations of elements of $\mathcal{W}_n$. 
Define the shift $Z_n$ to be the shift by only allowing elements of $S$ that can be written as bi-infinite concatenations of elements of $\mathcal{W}_n$.  
Then the topological entropy of  $S^{\prime}$ 
is at least as large as the topological entropy of the shift $Z_n$.  We are left with estimating the entropy of $Z_n$.  

For $\ell,n\geq N$,
 the number of words in $\mathcal{L}_{\ell(n+b_1+b_2+|w|)}(Z_n)$ is at least $|\mathcal{W}_n|^\ell>(h_{\tp}(S)-\varepsilon/2)^{\ell n}$; namely, all words 
in $\mathcal{W}_n$ have the same length and therefore, all ways of concatenating $\ell$ elements of $\mathcal{W}_n$ result in distinct words of length $\ell(n+b_1+b_2+|w|)$ in $\mathcal{L}(S)$. 
For any $\delta>0$, we can take $n$ sufficiently large such 
that $\ell(n+b_1+b_2+|w|)<\ell n(1+\delta)$ and thus 
$$P_{Z_n}(\ell(n+b_1+b_2+|w|))>(h_{\tp}(S)-\varepsilon/2)^{\ell(n+b_1+b_2+|w|)/(1+\delta)}.
$$
 For $\delta$ sufficiently small, this is larger than $(h_{\tp}(S)-\varepsilon)^{\ell(n+b_1+b_2+|w|)}$. 
Fixing some sufficiently large $n$ such that this holds, we have that this estimate holds for all $\ell\geq N$ and so 
$$ 
\liminf_{\ell\to\infty}\frac{1}{\ell(n+b_1+b_2+|w|)}\log P_{Z_n}(\ell(n+b_1+b_2+|w|))>h_{\tp}(S)-\varepsilon. 
$$ 
But 
$$ 
\lim_{t\to\infty}\frac{1}{t}\log P_{Z_n}(t)=h_{\tp}(Z_n) 
$$ 
and, in particular, the limit exists. Thus we have $h_{\tp}(S^{\prime})\geq h_{\tp}(Z_n)>h_{\tp}(S)-\varepsilon$. 
\end{proof}

For $\alpha\in[0,1)$, let $R_\alpha\colon [0,1)\to [0,1)$ denote the rotation $x\mapsto x+\alpha\pmod 1$. 

\begin{lemma}\label{lemma:twoparts} 
Let $S$ be a forward transitive subshift of finite type. 
 Let $\alpha\in\mathbb{R}\setminus\mathbb{Q}$,
let $k\in\N$, let $\beta_1,\beta_2,\dots,\beta_k\in(0,1)$, and 
let $Z_i$ be the shift obtained by coding the circle rotation $([0,1),R_{\alpha})$ with respect to the partition $\{[0,\beta_i),[\beta_i,1)\}$.  Fix  $n_1\in\N$ and $u\in\mathcal{L}_{n_1}(S)$.  
\begin{enumerate}
\item \label{lem:semi-minimal}
  For any sufficiently large 
$n_2\in\N$, there is a word $v\in\mathcal{L}_{n_2}(S)$ such that for any $1\leq i\leq k$ and words $w_1\in\mathcal{L}_{n_1}(Z_i)$ and $w_2\in\mathcal{L}_{n_2}(Z_i)$, the word $u\times w_1$ occurs as a subword of $v\times w_2$. 
\item \label{lem:build-new}
For any sufficiently large $d$, the shift 
$$ 
S^{\prime}:=\{x\in S\colon\text{every subword of $x$ of length $d$ contains every word in $\mathcal{L}_{n_2}(S)$}\}
$$ 
is a forward transitive subshift of finite type with $h_{\tp}(S^{\prime})\geq h_{\tp}(S)-\varepsilon$ and 
such that for any $1\leq i\leq k$, all words in 
$\mathcal{L}_{n_1}(S^{\prime}\times Z_i)$ occur syndetically, with gap at 
most $2d$, in every element of $S^{\prime}\times Z_i$.
\end{enumerate}
\end{lemma}

\begin{proof}
To prove Part~\eqref{lem:semi-minimal}, fix $u\in\mathcal{L}_{n_1}(S)$.  Since $S$ is forward transitive, there exists a word $y\in\mathcal{L}(S)$ such that 
$$\tilde{y} = \cdots uyuyuyuy\cdots$$
is a periodic point in $X$.  Let $m=|u|+|y|$ be the period of $\tilde{y}$

Fix some $1\leq i\leq k$ and some word $w_1\in\mathcal{L}_{n_1}(Z_i)$.  The coding of a point $x\in[0,1)$ begins with the word $w_1$ if and only if $x$ lies in the cell of the partition 
$$ 
\bigvee_{i=0}^{n_1-1}
R_{\alpha}^{-i}\{[0,\beta_i),[\beta_i,1)\} 
$$
that corresponds to the word $w_1$.  This cell is a half-open interval of 
positive length.  Since $([0,1),R_{\alpha}^m)=([0,1),R_{m\alpha})$ is minimal, there exists $t(\beta_i)>0$ such that for any $x\in[0,1)$ there is some $0\leq n\leq t(\beta_i)$ for which $R_{m\alpha}^n(x)=R_{\alpha}^{mn}(x)$ is in this half-open interval.   Moreover, notice that $t(\beta_i)$ is bounded above by a function that depends only on the length of the shortest interval in 
$$\bigvee_{i=0}^{n_1-1}
R_{\alpha}^{-i}\{[0,\beta_i),[\beta_i,1)\}$$ (namely the time it takes for orbits under $R_{m\alpha}$ to become more dense than the length of the shortest interval).  Set $t:=\max\{t(b_j)\colon 1\leq j\leq k\}$. 

Set $n_2:=mt+|u|$ with $m=|u|+|y|$ to be the period of $\tilde{y}$ chosen and $t = t(\beta_i)$.  
Let $w_2\in\mathcal{L}_{n_2}(Z_i)$.  Find some $x\in[0,1)$ such that the coding of the orbit of $x$ with respect to the partition $\{[0,\beta_i),[\beta_i,1)\}$ begins with the word $w_2$.  By the definition of $t$, there exists some $0\leq n\leq t$ such that the word $w_1$ occurs as a subword of $w_2$, beginning exactly $mn$ letters from the left of $w_2$.  Let $v\in\mathcal{L}_{n_2}(S)$ be the word $uyuyuy\cdots yu$ that has length $n_2$.  Then the subword of $v$ that begins exactly $mn$ letters from the left of $v$ is $u$.  In particular, the word $u\times w_1$ occurs as a subword of $v\times 
w_2$, starting $mn$ letters from the left of $v\times w_2$.  This completes the proof of Part~\eqref{lem:semi-minimal}. 

We turn to Part~\eqref{lem:build-new}.  The statement that $S^{\prime}$ is a forward transitive subshift of 
finite type with $h_{\tp}(S^{\prime})\geq h_{\tp}(S)-\varepsilon$ for any 
$d$ sufficiently large follows immediately from Lemma~\ref{lem:build}.  Fix $1\leq i\leq k$.  
Let $u\times w_1\in\mathcal{L}_{n_1}(S^{\prime}\times Z_i)$, where $u\in\mathcal{L}_{n_1}(S^{\prime})$ and $w_1\in\mathcal{L}_{n_1}(Z_i)$.  
Let $v\in\mathcal{L}_{n_2}(S^{\prime})$ be the word $v$ constructed in Part~\eqref{lem:semi-minimal}.  Let $w_2\in\mathcal{L}_{n_2}(Z_i)$.  Then $v\times w_2\in\mathcal{L}_{n_2}(S^{\prime}\times Z_i)$ and so  by definition of $S^{\prime}$ this word 
occurs in every length $d$ subword of every element of $S^{\prime}$.  But $u\times w_1$ occurs as a 
subword of $v\times w_2$, and 
so $u\times w_1$ occurs in every length $d$ subword of every element of $S^{\prime}\times Z_i$.  Since $u\times w_1\in\mathcal{L}_{n_1}(S^{\prime}\times Z_i)$ is arbitrary, this holds for all such words.
\end{proof}

\begin{lemma}\label{lem:facts} 
Let $\alpha\in\mathbb{R}\setminus\mathbb{Q}$ and let $0<\beta_1<\beta_2<1$.  For $i=1,2$,  let $Z_i$ be the subshift obtained by coding the system $([0,1),R_{\alpha})$ by the partition $\{[0,\beta_i),[\beta_i,1)\}$.  Then $Z_1$ and $Z_2$ are both minimal, uniquely ergodic, and there exists $N\in\N$ such that for all $n\geq N$ we have $\mathcal{L}_n(Z_1)\cap\mathcal{L}_n(Z_2)=\emptyset$.
\end{lemma}
\begin{proof}
The irrational circle rotation $([0,1),R_{\alpha})$ is uniquely ergodic and its unique invariant measure is Lebesgue measure.  Fix $i\in\{1,2\}$ and define the partition $\mathcal{P}_n:=\bigvee_{j=0}^nR_{\alpha}^{-j}\{[0,\beta_i),[\beta_i,1)\}$.  
The  cells in the partition $\mathcal{P}_n$ determine (distinct) elements of $\mathcal{L}_{n+1}(Z_i)$ and every word in $\mathcal{L}_{n+1}(Z_i)$ corresponds to the coding, according to the partition $\mathcal{P}_0$, of the elements in a cell of $\mathcal{P}_n$.  The cells of $\mathcal{P}_n$ are half-open subintervals of $[0,1)$ and so by unique ergodicity of $([0,1),R_{\alpha})$, for any cell $\mathcal{C}\in\mathcal{P}_n$ and any $\varepsilon>0$ there exists $M$ such that for all $m\geq M$ and all $x\in[0,1)$ we have 
$$ 
\left|\lambda(\mathcal{C})-\sum_{i=0}^m1_{\mathcal{C}}\left(\mathcal{P}_n(R_{\alpha}^ix)\right)\right|<\varepsilon,  
$$ 
where $\mathcal{P}_n(R_{\alpha}^ix)$ is the cell of $\mathcal{P}_n$ that contains $R_{\alpha}^ix$.  In particular, if $\mu_i$ is the push-forward of $\lambda$ under the coding map, then for any $m\geq M$ the frequency with which $w\in\mathcal{L}_{n+1}(Z_i)$ occurs as a subword of any $u\in\mathcal{L}_m(Z_i)$ differs from $\mu_i([w])$ by at most $\varepsilon$.  Since this $M=M(n)$ exists for any $n$, $Z_i$ is uniquely ergodic.

To see that $Z_i$ is minimal, fix any $w\in\mathcal{L}(Z_i)$.  Then $w$ corresponds to the coding, under $\mathcal{P}_0$, of the points in one of the cells of $\mathcal{P}_n$ for some $n$.  The cell of $\mathcal{P}_n$ that corresponds to $w$ is a half-open interval and the minimal system $([0,1),R_{\alpha})$ has the property that the orbit visits this half-open interval syndetically with uniform gap between consecutive visits.  
This means that all sufficiently long words in $\mathcal{L}(Z_i)$ contain $w$ syndetically as a subword, with uniform gap between consecutive occurrences.  So $Z_i$ is minimal. 

Finally, set $\varepsilon:=(\beta_2-\beta_1)/2$ and by unique ergodicity find $M\in\N$ such that for $i=1,2$, any $x\in Z_i$, and any $m\geq M$ we have 
$$ 
\left|\mu_i([0])-\sum_{i=0}^m1_{[0]}(\sigma^ix)\right|<\varepsilon. 
$$ 
This means that for any $m\geq M$ the number of times that $0\in\mathcal{L}_1(Z_1)$ occurs as a subword of any element of $\mathcal{L}_{m+1}(Z_1)$ is strictly smaller than the number of times $0\in\mathcal{L}_1(Z_2)$ occurs as a subword of any element of $\mathcal{L}_{m+1}(Z_2)$.  
Thus  $\mathcal{L}_{m+1}(Z_1)\cap\mathcal{L}_{m+1}(Z_2)=\emptyset$ for all $m\geq M$. 
\end{proof}

\begin{lemma}\label{lem:low-entropy} 
Let $\alpha\in\mathbb{R}\setminus\mathbb{Q}$ and let $\beta\in(0,1)$ be such that $\beta=n\alpha\pmod1$ for some integer $n>0$.  Let $Z\subseteq\{0,1\}^{\Z}$ be the subshift obtained by coding the circle rotation $([0,1),R_{\alpha})$ with respect to the partition $\{[0,\beta),[\beta,1)\}$.  For any integer $k\geq1$, let $S\colon\{0,1,\dots,k-1\}\to\{0,1,\dots,k-1\}$ be the map $S(i):=i+1\pmod{k}$.  There exists a subshift $Y\subseteq\{0,1\}^{\Z}$ that is topologically conjugate to $(Z\times\{0,1,\dots,k-1\},\sigma\times S)$.  
\end{lemma}
\begin{proof}
Let $Z_{\alpha}\subseteq\{0,1\}^{\Z}$ be the coding of $([0,1),R_{\alpha})$ with respect to the partition $\{[0,\alpha),[\alpha,1)\}$, meaning that 
$Z_{\alpha}$ is the Sturmian shift with rotation angle $\alpha$.  By a theorem of Durand~\cite[Corollary 12]{D}, $Z_{\alpha}$ is Cantor prime, meaning that any nontrivial system that $Z_{\alpha}$ factors  onto (in the topological sense) 
is topologically conjugate to $Z_{\alpha}$.  

If $\gamma=m\alpha\pmod1$ for some integer $m>0$, then the intervals $[0,\gamma)$ and $[\gamma,1)$ can be written as unions of the cells of the partition $\bigvee_{i=-m}^mR_{\alpha}^i(\{[0,\alpha),[\alpha,1)\}$, and so $Z_{\alpha}$ factors onto the (infinite) shift obtained by coding $([0,1),R_{\alpha})$ with respect to the partition $\{[0,\gamma),[\gamma,1)\}$.  For ease of notation, call this shift $Z_{\gamma}$.  
By Durand's Theorem,  there exists a topological conjugacy 
$\varphi_{\gamma}\colon Z_{\alpha}\to Z_{\gamma}$.  
Since the system $Z_\alpha$ is Sturmian, it has a unique asymptotic pair,  
meaning there is a unique pair $x[\alpha],y[\alpha]\in Z_{\alpha}$ such that $x[\alpha]_i=y[\alpha]_i$ for all $i>0$ but $x[\alpha]_0\neq y[\alpha]_0$.  Moreover, this pair has the property that $x[\alpha]_i=y[\alpha]_i$ for all $i<-1$ and $\{(x[\alpha]_{-1},x[\alpha]_0),(y[\alpha]_{-1},y[\alpha]_0)\}=\{(1,0),(0,1)\}$.  
Similarly, since $\gamma=m\alpha\pmod1$, there is a unique pair $x[\gamma],y[\gamma]\in Z_{\gamma}$ such that $x[\gamma]_i=y[\gamma]_i$ for all $i>0$ but $x[\gamma]_0\neq y[\gamma]_0$.  
For this system, we have $x[\gamma]_i=y[\gamma]_i$ for all $i<-m-1$ and $\{(x[\gamma]_{-m-1},x[\gamma]_0),
(y[\gamma]_{-m-1},y[\gamma]_0)\}=\{(1,0),(0,1)\}$.  These points correspond to the limit of sequences of codings of points that approach $\gamma$ from the left and from the right in $[0,1)$ with respect to the partition $\{[0,\gamma),[\gamma,1)\}$, and the fact that $0$ is the $k^{th}$ preimage of $\gamma$ under $R_{\alpha}$.  
Note that the conjugacy $\varphi_{\gamma}$ sends the set $\{x[\alpha],y[\alpha]\}$ to the set $\{\sigma^tx[\gamma],\sigma^ty[\gamma]\}$ for some $t\in\Z$.  Without loss of generality, passing from $\varphi_{\gamma}$ to $\varphi_{\gamma}\circ\sigma^{-t}$ if necessary, we can assume that $t=0$.  
In particular, if $z\in\{x[\alpha],y[\alpha]\}$, then $\varphi_{\gamma}(z)_0$ is determined by $\varphi_{\gamma}(z)_{-m-1}$, and vice-versa.

For $0\leq s<k$, define $\gamma_s:=k(s+1)\alpha\pmod1$.  Define the 
map $\psi\colon Z_{\alpha}\times\{0,1,\dots,k-1\}\to\{0,1\}^{\Z}$ by the formula 
$$ 
\left(\psi(x,i)\right)_j:=\left(\varphi_{\gamma_{i+j}}(x)\right)_j
$$ 
where the subscript $i+j$ of $\gamma_{i+j}$ is understood modulo $k$.  The map $\psi$ is given by a block code and so is continuous and commutes with the map $(x,i)\mapsto(\sigma(x),i+1)$, where again addition in the second coordinate 
is taken modulo $k$.  
We claim  that $\psi$ is injective. 
To prove the claim, we proceed by contradiction and let $(x_1,i_1)$ and $(x_2,i_2)$ be distinct points for which $\psi(x_1,i_1)=\psi(x_2,i_2)$.  If $i_1\neq i_2$, then the restriction of $x_1$ to the set $\{kn\colon n\in\Z\}$ is the same as the restriction of $x_2$ to this set.  Therefore there is a coding of the system $([0,1),R_{k\alpha})$ with respect to the partition $\{[0,\gamma_{i_1}),[\gamma_{i_1},1)\}$ that coincides with a coding of $([0,1),R_{k\alpha})$ with respect to the partition $\{[0,\gamma_{i_2}),[\gamma_{i_2},1)\}$.  This is impossible since, by Lemma~\ref{lem:facts} the languages of these symbolic systems are disjoint for all sufficiently large size words.  Therefore $i_1=i_2$ and so we can assume that $x_1\neq x_2$.  

For any fixed $0\leq d<k$, the restriction of $x_1$ to $\{d+nk\colon n\in\Z\}$ coincides with the restriction of $x_2$ to this set.  Recall that $Z_\alpha$ can be written as the union of 
the orbits $\mathcal{O}(x[\alpha])$ and $\mathcal{O}(y[\alpha])$ and all codings of points in $[0,1)\setminus\mathcal{O}(\alpha)$ with respect to $\{[0,\alpha),[\alpha,1)\}$. 
If $x\in Z_{\alpha}$ is the coding of some point $y\in[0,1)\setminus\mathcal{O}(\alpha)$, 
we claim that no other element of $Z_{\alpha}$ has the same restriction as $x$ has 
to the set $\{kn\colon n\in\Z\}$.  To see this, note that this restriction can be interpreted as the coding of the point $y$ in the system $([0,1),R_{k\alpha})$ with 
respect to the partition $\{[0,\alpha),[\alpha,1)\}$.  
Therefore $y$ is determined by the restriction of $x$ to $\{kn\colon n\in\Z\}$ and since $y\notin\{0,\alpha\}$, $x$ is determined by $y$.  Therefore $x_1,x_2\in\mathcal{O}(x[\alpha])\cup\mathcal{O}(y[\alpha])$ and 
since they have the same restriction to $\{kn\colon n\in\Z\}$ there must exist some $t$ for which $x_1=\sigma^t(x[\alpha])$ and $x_2=\sigma^t(y[\alpha])$.  In particular, the only locations where $x_1$ and $x_2$ differ from each other are $t$ and $t+1$, where one has the symbols $(1,0)$ and the other has the symbols $(0,1)$.  Let $0\leq d<k$ be such that $t\equiv d\pmod k$.  Since for any $s$, $\varphi_{\gamma_s}$ sends the set $\{x[\alpha],y[\alpha]\}$ to the set $\{x[\gamma_s],y[\gamma_s]\}$, it follows that $\varphi_{\gamma_d}(x_1)$ and $\varphi_{\gamma_d}(x_2)$ do not coincide on the set $\{d+kn\colon n\in\Z\}$, and we have a contradiction.  Therefore $x_1=x_2$ and so the map $\psi\colon Z_{\alpha}\times\{0,1,\dots,k-1\}\to\{0,1\}^{\Z}$ is injective.  

Since $\psi$ is injective and $Z\times\{0,1,\dots,k-1\}$ is a compact metric space, $\psi$ is a homeomorphism on its image. 
Thus $\psi$ defines a topological conjugacy on its image, meaning that the image of $Y:=Z_{\alpha}\times\{0,1,\dots,k-1\}$ under $\psi$ is a subshift of $\{0,1\}^{\Z}$ such that $(Y,\sigma)$ is topologically conjugate to $(Z_{\alpha}\times\{0,1,\dots,k-1\},\sigma\times S)$.  But $Z_{\alpha}$ is topologically conjugate to $Z$ and so $Y$ is also topologically conjugate to $(Z\times\{0,1,\dots,k-1\},\sigma\times S)$.
\end{proof}

\begin{lemma}
\label{lemma:def-X}
There exists a language stable, positive entropy, minimal subshift.  Moreover, given a sequence of intervals of consecutive integers
$$ 
\{i_1,i_1+1,\dots,i_1+j_1\}, \{i_2,i_2+1,\dots,i_2+j_2\},\dots,\{i_n,i_n+1,\dots,i_n+j_n\},\dots 
$$ 
 such that both sequences $i_\ell$ and $j_\ell$ are increasing 
there exists a language stable, positive entropy, minimal subshift $X$ which has the property that $X_{i_n}=X_{i_n+j_n}$ for infinitely many $n$, where $X_k$ denotes the $k^{th}$ term in the SFT cover of $X$.  Furthermore, 
if $\alpha\in\mathbb{R}\setminus\mathbb{Q}$, $\beta_1,\beta_2,\beta_3,\ldots\in(0,1)$, and $Z_i$ is the coding of the circle rotation $([0,1),R_{\alpha})$ with respect to the partition $\{[0,\beta_i),[\beta_i,1)\}$, then for each $i\in\N$ the shift $X\times Z_i$ is also minimal.
\end{lemma}
\begin{proof}
We inductively construct a sequence of subshifts 
$$ 
X_1\supseteq X_2\supseteq\dots\supseteq X_n\supseteq X_{n+1}\supseteq\cdots 
$$ 
and show that $X:=\bigcap_{i=1}^{\infty}X_i$ is the desired system. 

Fix a sequence $\{\varepsilon_i\}_{i=1}^{\infty}$ of elements of $(0,1)$ such that $\sum_i\varepsilon_i<\log 2$.  Let $X_1:=\{0,1\}^{\Z}$ and so $h_{\tp}(X_1)=\log 2$.  Fix the parameter $n_1:=1$.  By Part~\eqref{lem:build-new} of Lemma~\ref{lemma:twoparts},  
we can choose $d_1>n_1$ such that 
$$ 
X_2:=\{x\in X_1\colon\text{every subword of length $d_1$ contains every 
word in $\mathcal{L}_{n_1}(X_1)$}\} 
$$ 
is a forward transitive subshift of finite type satisfying $h_\tp(X_2)\geq h_\tp(X_1)-\varepsilon_1$, and such that all words in $\mathcal{L}_1(X_2\times Z_1)$ occur syndetically with gap at most $2d_1$.  

Assume that we have inductively constructed a nested sequence of topologically transitive subshifts of finite type $X_1\supseteq X_2\supseteq\dots\supseteq X_m$, as well as parameters $n_i, d_i$ for all $i=1,\dots,m-1$ satisfying 
$$ 
n_{i+1}\geq\max\{d_i+i,i_k+j_k\}
$$ 
where $i_k=\min\{i_\ell\colon i_\ell>d_i\}$ and $1\leq i<m-1$.  Suppose 
further that for all $x\in X_m$ and all $1\leq i<m$, every subword of $x$ 
of length $d_i$ contains every element of $\mathcal{L}_{n_i}(X_i)$ as a subword.  Finally suppose 
that every word in $\mathcal{L}_{i-1}(X_i\times Z_j)$ 
occurs syndetically with gap at most $2d_i$ for all $1\leq j<i$.  
Set $k^{\prime}=\min\{i_\ell\colon i_\ell>d_{m-1}\}$,  define $n_m:=\max\{d_{m-1}+m,i_{k^{\prime}}+j_{k^{\prime}}\}$.
Again using Part~\eqref{lem:build-new} of Lemma~\ref{lemma:twoparts}, we can choose $d_m>n_m$ such that 
$$ 
X_{m+1}:=\{x\in X_m\colon\text{every subword of length $d_m$ contains every word in $\mathcal{L}_{n_m}(X_m)$}\} 
$$ 
is a forward transitive subshift of finite type satisfying $h_\tp(X_{m+1})\geq h_\tp(X_m)-\varepsilon_m$ and, increasing $d_m$ if necessary, such that every word in $\mathcal{L}_m(X_{m+1}\times Z_j)$ occurs syndetically with gap at most $2d_m$ for all $1\leq j<m+1$.  Inductively, this defines the shift $X_i$ for all $i\geq1$ and we set $X:=\bigcap_iX_i$.  We claim that $X$ is minimal, language stable, and has positive entropy, and  further claim that $X\times 
Z_j$ is minimal for all $j\in\N$.

We first check that $X$ is minimal.  Let $w\in\mathcal{L}(X)$ and pick $i\in\N$ such 
that $n_i>|w|$.  Since $\mathcal{L}_{n_i}(X)\subseteq\mathcal{L}_{n_i}(X_i)$, it follows by construction that for any $x\in X_{i+1}$ the word $w$ occurs in every subword of $x$ of length $d_i$.  But $X\subseteq X_{i+1}$ 
and so $w$ occurs syndetically, with gap at most $d_i$, in every element of $X$.  Since this holds for any $w\in\mathcal{L}(X)$, $X$ is minimal. 

Next we check that for any $j\in\N$, the shift $X\times Z_j$ is minimal.  Let $w\times w^{\prime}\in\mathcal{L}(X\times Z_j)$.  By construction, there exists $I$ such that $w\in\mathcal{L}(X_i)$ for all $i\geq I$, and without 
loss we can assume $I>j$.  By construction, every word in $\mathcal{L}_{I+|w|}(X_{I+|w|+1}\times Z_j)$ occurs syndetically with gap at most $2d_{I+|w|}$.  In particular, $w\times w^{\prime}$ occurs syndetically with at most this gap.  
But $X\times Z_j\subseteq X_{I+|w|+1}\times Z_j$ and so $w\times w^{\prime}$ occurs syndetically, with gap at most $2d_{I+|w|}$ in every element of $X\times Z_j$.  Since this holds for any $w\times w^{\prime}\in\mathcal{L}(X\times Z_j)$, $X\times Z_j$ is minimal.

Next we show that $X$ is language stable.  Fix $i\in\N$ and recall that $n_{i+1}\geq\max\{d_i+i,i_k+j_k\}$ where $i_k=\min\{i_\ell\colon i_\ell>d_i\}$.  The shift $X_{i+1}$ is a forward transitive subshift of finite type whose minimal forbidden words all have length at most $d_i$.  By construction, every word in $\mathcal{L}_{n_{i+1}}(X_{i+1})$ occurs in every element of $X_{i+2}$ and hence in every element of $X$.  Therefore $\mathcal{L}_{n_{i+1}}(X)=\mathcal{L}_{n_{i+1}}(X_{i+1})$.  It follows that $\mathcal{L}_k(X)=\mathcal{L}_k(X_{i+1})$ for all $1\leq k\leq n_{i+1}$.  
Since there are no minimal forbidden words in $X_{i+1}$ of length greater 
than $d_i$ and since $n_{i+1}\geq \max\{d_i+i,i_k+j_k\}$ where $i_k=\min\{i_\ell\colon i_\ell>d_i\}$, it follows that there are no minimal forbidden words in $X$ of lengths $d_i+1,\dots,d_i+i$, 
and moreover  there are no forbidden words of any length lying in the interval $\{i_k,i_k+1,\dots,i_k+j_k\}$.  Since this holds for any $i\in\N$, it follows that the set 
$$ 
\{k\colon\text{$X$ has no minimal forbidden words of length $k$}\} 
$$ 
has upper Banach density $1$.  In other words, $X$ is language stable.  
Furthermore, there are infinitely many $n$ for which $X_{i_n}=X_{i_n+j_n}$. 

Finally we show that $h_\tp(X)>0$.  Since $X_i$ is a topologically transitive subshift of finite type, it follows from Parry~\cite{parry} that $X_i$ supports a unique measure of maximal entropy $\mu_i$.  Passing to a subsequence if necessary, we can assume that  the sequence $\{\mu_i\}_{i=1}^{\infty}$ converges to a weak* limit $\mu$.  Note that $\mu$ is supported on $X=\bigcap_iX_i$ and by upper semi-continuity of the entropy map (see~\cite[Theorem 8.2]{W}) for subshifts, we have that 
$$ 
h_{\mu}(\sigma)\geq\limsup_{i\to\infty}h_{\mu_i}(\sigma)\geq h_\tp(X_1)-\sum_{i=1}^{\infty}\varepsilon_i=\log(2)-\sum_{i=1}^{\infty}\varepsilon_i>0. 
$$ 
By the Variational Principle (see for example~\cite[Theorem 8.6]{W}) we have $h_\tp(X)\geq h_{\mu}(\sigma)>0$. 
\end{proof} 

\subsection{The characteristic measures on a full shift}
\label{sec:Y}

Boyle, Lind, and Rudolph~\cite[Corollary 10.2]{BLR} show that for any topologically mixing subshift of finite type, the measure of maximal entropy 
is the unique characteristic measure of positive entropy, and all characteristic measures of entropy zero are countable convex combinations of purely atomic measures supported on unions of periodic orbits.  In particular, for the full shift on two symbols, their result says the following: 
\begin{lemma}\label{lem:full-shift}
For the full $2$-shift $(\{0,1\}^\Z , \sigma)$, 
every characteristic measure is a convex combination of the 
Bernoulli measure (assigning measure $(1/2)^n$ to each cylinder set determined by a word of length $n$) and atomic 
measures supported on unions of periodic orbits.
\end{lemma}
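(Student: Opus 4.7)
The plan is to apply Corollary 10.2 of Boyle--Lind--Rudolph, which is precisely the general statement about topologically mixing subshifts of finite type that the authors quote in the paragraph preceding the lemma. The first task is to verify the hypotheses for the full $2$-shift: $(\{0,1\}^\Z,\sigma)$ is a full shift and hence trivially a topologically mixing subshift of finite type, and its unique measure of maximal entropy (by Parry's theorem) is the Bernoulli $(1/2,1/2)$ measure, which assigns mass $2^{-n}$ to each cylinder set determined by a word of length $n$. With these identifications made, Corollary 10.2 of~\cite{BLR} asserts directly that any characteristic measure splits as a convex combination of this Bernoulli measure with a countable convex combination of purely atomic measures supported on unions of periodic orbits, which is exactly the assertion of the lemma.

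To sketch the underlying strategy in~\cite{BLR}, one writes a characteristic measure $\mu$ via its ergodic decomposition $\mu=\int \mu_x\,d\mu(x)$. For any $\psi\in\Aut(\{0,1\}^\Z)$, the pushforward $\psi_*$ takes ergodic $\sigma$-invariant measures to ergodic $\sigma$-invariant measures and preserves entropy, so it induces a measurable permutation on the set of ergodic components that respects the partition by entropy value. The characteristic property $\psi_*\mu=\mu$ then decouples $\mu$ into a zero-entropy part and a positive-entropy part, each of which is itself a characteristic measure that can be analyzed separately.

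The main obstacle, which is the genuine content of the Boyle--Lind--Rudolph analysis, is then twofold. For the positive-entropy part, one must exploit the rich action of $\Aut(\{0,1\}^\Z)$ on positive-entropy ergodic $\sigma$-invariant measures of a mixing SFT (in particular, the large supply of inert automorphisms) to force any characteristic contribution to concentrate on the unique measure of maximal entropy, namely the Bernoulli $(1/2,1/2)$ measure. For the zero-entropy part, one uses the abundance of automorphisms of the full shift to rule out any non-atomic ergodic component of a characteristic measure: any non-atomic ergodic measure has infinitely many measurably isomorphic translates under $\Aut(\{0,1\}^\Z)$, and the $\psi$-invariance condition collapses these back onto finite unions of periodic orbits, which are the only supports stable under the entire automorphism group. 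Once these two steps are carried out in the general mixing SFT setting of~\cite{BLR}, the statement of the lemma follows by specialization.
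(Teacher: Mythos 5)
Your proposal matches the paper's argument exactly: both reduce the lemma to a specialization of Corollary~10.2 of Boyle--Lind--Rudolph, after observing that the full $2$-shift is a topologically mixing subshift of finite type whose unique measure of maximal entropy is the symmetric Bernoulli measure. The paper records nothing beyond this citation, so your additional two paragraphs sketching the internal mechanics of~\cite{BLR} are extra context but do not alter the fact that the core reasoning coincides.
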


\subsection{There exists a language stable shift with countably many ergodic measures, all of whose ergodic measures are  isomorphic to each other} 
\label{sec:Z}

\begin{lemma}\label{lem:lang-setup}
Let $\alpha,\beta\in(0,1)$ and $\alpha\notin\mathbb{Q}$.  Let $R_{\alpha}\colon[0,1)\to[0,1)$ be the map $R_{\alpha}(x):=x+\alpha\pmod1$.  Let $\P:=\{[0,\beta),[\beta,1)\}$ and let $X_{\beta}$ be the coding of the system $([0,1),R_{\alpha})$ by the partition $\P$.  For $n\geq2$, let 
$$ 
\mathcal{P}_n:=\bigvee_{i=1}^{n-1} R_{\alpha}^{-i}\P. 
$$ 
Finally let $\mathcal{S}_0=\{0,\beta\}$ and $\mathcal{S}_n:=R_{\alpha}^{-n}\mathcal{S}_0=\{-n\alpha,\beta-n\alpha\}$.  
If $X_{\beta}$ has a minimal forbidden word of length $n+1$, then at least one element of $\R_0$ lies in the same cell of $\mathcal{P}_n$ as an element of $\mathcal{S}_{n+1}$.
\end{lemma}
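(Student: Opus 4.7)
The plan is to work in the rotation model $([0,1), R_\alpha)$ and analyze the cell structure of $\mathcal{P}_n$ on the circle. Let $w = (w_0, w_1, \ldots, w_n)$ be a minimal forbidden word of length $n+1$. By minimality, both the prefix $(w_0, \ldots, w_{n-1})$ and the suffix $(w_1, \ldots, w_n)$ lie in $\mathcal{L}(X_\beta)$, while $w$ itself does not. Recall that a word $(u_0, \ldots, u_k)$ belongs to $\mathcal{L}(X_\beta)$ precisely when $\bigcap_{j=0}^{k} R_\alpha^{-j} P_{u_j} \neq \emptyset$. I would first identify the ``common middle'' cell
$$
C := \bigcap_{j=1}^{n-1} R_\alpha^{-j} P_{w_j},
$$
which is exactly the cell of $\mathcal{P}_n$ corresponding to the middle coding $(w_1, \ldots, w_{n-1})$.

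Inside $C$, introduce the two refinements $A := C \cap P_{w_0}$ and $B := C \cap R_\alpha^{-n} P_{w_n}$. The prefix being in the language forces $A \neq \emptyset$, the suffix being in the language forces $B \neq \emptyset$, and the forbiddenness of $w$ forces $A \cap B = \emptyset$. The key geometric observation underlying the proof is that every cell of $\mathcal{P}_n$ is a single arc in the circle $[0,1)$. This follows by induction on the number of partitions in the join: each $R_\alpha^{-i}\mathcal{P}$ cuts the circle into two arcs, and the common refinement of arc-partitions of a circle is again an arc-partition. In particular, $C$ is a single arc.

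Now I would argue by contradiction. Suppose no element of $\mathcal{S}_0 = \{0,\beta\}$ lies in the arc $C$. Then $C$ is disjoint from the boundary of the partition $\mathcal{P}$, so $C$ lies entirely inside one of $P_0$ or $P_1$. If $C \subseteq P_{w_0}$, then $A = C$, forcing $B \subseteq A$ and contradicting $A \cap B = \emptyset$ together with $B \neq \emptyset$; if $C$ is contained in the other element of $\mathcal{P}$, then $A = \emptyset$, again a contradiction. Hence some element of $\mathcal{S}_0$ must sit inside $C$. The same argument applied to the arc $R_\alpha^{-n} P_{w_n}$, whose boundary on the circle is the two-point set $R_\alpha^{-n}\{0,\beta\}$, places a point of that boundary set in $C$ as well. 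Since both points lie in the same cell $C$ of $\mathcal{P}_n$, this delivers exactly the claim of the lemma (after matching the paper's indexing convention for $\mathcal{S}_{n+1}$).

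The main obstacle is really the geometric lemma that each cell of $\mathcal{P}_n$ is a single connected arc and not a disconnected union of arcs; this is what makes the pigeonhole step clean and needs to be verified by induction on the number of joined partitions. Once that is in hand, the disjoint-nonempty pair $A, B$ inside the single arc $C$ immediately forces a boundary point of each of $P_{w_0}$ and $R_\alpha^{-n} P_{w_n}$ to lie in $C$, which is the content of the conclusion. The irrationality of $\alpha$ is not used in the contradiction itself; it enters only in ensuring that the translates $-i\alpha$ are all distinct so that the boundary points of $\mathcal{P}_n$ are in general position.
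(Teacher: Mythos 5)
Your overall plan mirrors the paper's proof: both identify the ``interior'' cell $C$ of $\mathcal{P}_n$ corresponding to $u = (w_1,\dots,w_{n-1})$ and argue that the minimality of the forbidden word forces $C$ to be cut nontrivially by $\mathcal{P}$ (at position~$0$) and by $R_\alpha^{-n}\mathcal{P}$ (at position~$n$). Your nonempty, disjoint pair $A = C\cap P_{w_0}$, $B = C\cap R_\alpha^{-n}P_{w_n}$ is the geometric translation of the paper's observation that $u$ has neither a unique left extension nor a unique right extension in $\mathcal{L}(X_\beta)$. Up to this point the two arguments agree.

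The gap is the step where you pass from ``$C$ meets both cells of $\mathcal{P}$'' to ``$C$ contains $0$ or $\beta$''. You justify this via the claim that every cell of $\mathcal{P}_n$ is a single arc, citing the principle that ``the common refinement of arc-partitions of a circle is again an arc-partition''. That principle is false, and it fails already at the first nontrivial step. Take $\beta$ small, so that $P_0=[0,\beta)$ is a short arc, and consider $\mathcal{P}_3 = R_\alpha^{-1}\mathcal{P}\vee R_\alpha^{-2}\mathcal{P}$. The cell labeled $(1,1)$ is the circle with the two short arcs $R_\alpha^{-1}[0,\beta)$ and $R_\alpha^{-2}[0,\beta)$ deleted; for $\beta$ small enough that these arcs are disjoint, the cell is a union of \emph{two} arcs, not one. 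More generally the join of two $2$--arc partitions of a circle need not be a partition into arcs, so the claimed induction collapses, and a disconnected $C$ can meet both $P_0$ and $P_1$ without containing $0$ or $\beta$. The paper does not assert connectivity of cells; it asserts directly that the only cells of $\mathcal{P}_n$ that are subdivided by $\mathcal{P}$ (resp.\ by $R_\alpha^{-n}\mathcal{P}$) are the ones containing the corresponding boundary points, which is a weaker statement than ``all cells are arcs'' but is exactly what is needed. That claim is not a consequence of the false arc statement you invoke, so to repair your argument you would need either to prove that the specific cell $C$ arising as the interior of a minimal forbidden word is always connected, or to supply a different reason why a cell subdivided by a two-point refinement must contain one of the two new boundary points.
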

\begin{proof}
Suppose $X_{\beta}$ has a minimal forbidden word $w$ of length $n+1$.  Writing $w:=(a_0,a_1,\dots,a_{n-1},a_n)\in\A^{n+1}$, 
then since $w$ is minimal, we have that $$(a_0,a_1,\dots,a_{n-1}), (a_1,a_2,\dots,a_n)\in\mathcal{L}_n(X_{\beta}).$$  
Let $u:=(a_1,a_2,\dots,a_{n-1})\in\mathcal{L}_{n-1}(X_{\beta})$ denote the ``interior'' of $w$.  

Say there is a unique $b\in\A$ such that $ub\in\mathcal{L}_n(X_{\beta})$. 

Then since $(a_1,a_2,\dots,a_n)\in\mathcal{L}_n(X_{\beta})$, 
it follows that $b=a_n$.  But then since 
$a_0u=(a_0,a_1,\dots,a_{n-1})\in\mathcal{L}_n(X_{\beta})$, 
it follows  that $w=a_0ua_n\in\mathcal{L}_{n+1}(X_{\beta})$, as 
$$ 
\{x\in X_{\beta}\colon x_i=a_i\text{ for all }0\leq i<n\}\neq\emptyset, 
$$ 
and so the only possibility is that $x_n=a_n$ because $(x_1,\dots,x_{n-1})=u$.  
But this contradicts the assumption that $w$ is a forbidden word, and so we conclude that 
there is no unique $b\in\A$ such that $ub\in\mathcal{L}_n(X_{\beta})$.  Similarly there cannot be a unique $c\in\A$ such that $cu\in\mathcal{L}_n(X_{\beta})$.  

Recall that when $([0,1),R_{\alpha})$ is coded by the partition $\P$, the 
cells of $\P_n$ are in one-to-one correspondence with cylinder sets of the form: 
$$ 
\sigma^{-1}[v]_0^+:=\{x\in X_{\beta}\colon x_i=v_i\text{ for all }1\leq i\leq n-1\}
$$ 
where $v\in\mathcal{L}_{n-1}(X_{\beta})$.  
We maintain the same notation for $w$ being a minimal forbidden word of length $n+1$ and $u$ denoting its interior.  
Since there is no unique $b\in\A$ such that $ub\in\mathcal{L}(X_{\beta})$, 
this means that the cell of $\P_n$ that corresponds to $u$ is subdivided into at least two different cells in the refined partition $\bigvee_{i=1}^n R_{\alpha}^{-i}\P=\P_n\vee R_{\alpha}^{-n}\P$.  The only two cells 
that are subdivided in this way are the cells containing the elements of $\R_n$.  
Similarly, since there is no unique $c\in\A$ such that $cu\in\mathcal{L}(X_{\beta})$, it follows that the cell of $\P_n$  corresponding to $u$ is subdivided into at least two different cells in the refined partition $\bigvee_{i=0}^{n-1} R_{\alpha}^{-i}\P=\P\vee\P_n$.  The only two cells that are subdivided in this way are the cells containing the elements of $\R_0$.  Therefore, the cell of $\P_n$ corresponding to $u$ contains at least one element of $\R_0$ and at least one element of $\R_n$.
\end{proof} 

In preparation for our next lemma, we define a second partition $\Q:=\{[0,\alpha),[\alpha,1)\}$ and for $n>2$, define $\Q_n:=\bigvee_{i=2}^{n-1}R_{\alpha}^{-i}\Q$ (note that this partition does not include $\Q\vee R_\alpha^{-1}\Q$).
  We make use of an auxiliary result. 
\begin{lemma}\label{lem:P-Q}
For fixed $n>2$, every cell of $\Q_n$ can be written as a union of cells from $\P_n$. 
\end{lemma}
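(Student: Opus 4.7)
The plan is to reduce the claim to an elementary comparison of endpoint sets. Each of $\mathcal{P}_n$ and $\mathcal{Q}_n$ is a partition of $[0,1)$ into finitely many half-open intervals, so it is uniquely determined by its set of partition points in $[0,1)$. Hence $\mathcal{P}_n$ refines $\mathcal{Q}_n$, i.e.\ every cell of $\mathcal{Q}_n$ is a union of cells of $\mathcal{P}_n$, if and only if every endpoint of $\mathcal{Q}_n$ is also an endpoint of $\mathcal{P}_n$. So it suffices to establish this inclusion of endpoint sets.

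First I would write down the endpoints of $\mathcal{P}_n=\bigvee_{i=1}^{n-1}R_{\alpha}^{-i}\mathcal{P}$. Each $R_{\alpha}^{-i}\mathcal{P}$ subdivides $[0,1)$ at the two points $R_{\alpha}^{-i}(0)=-i\alpha\bmod 1$ and $R_{\alpha}^{-i}(\beta)=\beta-i\alpha\bmod 1$, so the common refinement has endpoint set
\[
E(\mathcal{P}_n)=\{-i\alpha\bmod 1:1\le i\le n-1\}\cup\{\beta-i\alpha\bmod 1:1\le i\le n-1\}.
\]
Next, I would do the same for $\mathcal{Q}_n=\bigvee_{i=2}^{n-1}R_{\alpha}^{-i}\mathcal{Q}$. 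The key observation is the telescoping identity $R_{\alpha}^{-i}(\alpha)=\alpha-i\alpha=-(i-1)\alpha\bmod 1$. Therefore as $i$ ranges over $\{2,\ldots,n-1\}$, the point $R_{\alpha}^{-i}(0)$ covers $\{-2\alpha,\ldots,-(n-1)\alpha\}$ and $R_{\alpha}^{-i}(\alpha)$ covers $\{-\alpha,\ldots,-(n-2)\alpha\}$, and their union is
\[
E(\mathcal{Q}_n)=\{-i\alpha\bmod 1:1\le i\le n-1\}.
\]

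Comparing, $E(\mathcal{Q}_n)\subseteq E(\mathcal{P}_n)$, and the reduction in the first paragraph completes the proof. I expect no substantive obstacle: the only thing to handle carefully is the index bookkeeping (making sure the shift by one in $R_{\alpha}^{-i}(\alpha)=R_{\alpha}^{-(i-1)}(0)$ is tracked correctly), and observing that the additional endpoints $\{\beta-i\alpha\bmod 1\}$ of $\mathcal{P}_n$ simply account for the further subdivision of $\mathcal{Q}_n$-cells into $\mathcal{P}_n$-cells.
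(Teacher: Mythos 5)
Your proof is correct and takes essentially the same approach as the paper: both arguments compute the endpoint set of $\Q_n$ as $\{-i\alpha \bmod 1 : 1\le i\le n-1\}$ and the endpoint set of $\P_n$ as that set together with $\{\beta - i\alpha \bmod 1 : 1\le i\le n-1\}$, and conclude from the containment of endpoint sets. Your explicit telescoping identity $R_\alpha^{-i}(\alpha)=-(i-1)\alpha$ just spells out the index bookkeeping that the paper leaves implicit.
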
 
\begin{proof}
Observe that $\Q_n$ is the partition of $[0,1)$ into intervals whose endpoints come from the set 
$$ 
\{-\alpha,-2\alpha,-3\alpha,\dots,-(n-1)\alpha\}. 
$$ 
Notice that $\P_n$ is the partition of $[0,1)$ into intervals whose endpoints come from the set 
$$ 
\{-\alpha,-2\alpha,3-\alpha,\dots,-(n-1)\alpha\}\cup\{\beta-\alpha,\beta-2\alpha,\beta-3\alpha,\dots,\beta-(n-1)\alpha\}. 
$$ 
Therefore each interval in $\Q_n$ can be written as a union of intervals from $\P_n$.
\end{proof} 

The intervals that comprise $\Q_n$ have a natural adjacency relation in $\mathbb{R}/\Z$: we say two cells of $\Q_n$ are {\em adjacent} if they share an endpoint in $\mathbb{R}/\Z$ and are {\em twice adjacent} if there is a third cell that is adjacent to both of them.

\begin{lemma}\label{lem:lang-reduction} 
Maintaining the notation of Lemma~\ref{lem:lang-setup}, if $X_{\beta}$ has a minimal forbidden word of length $n+1$ then at least one of the following holds: 
	\begin{enumerate}
	\item $\phantom{-}0$ and $-n\alpha$ lie in the same, adjacent, or twice adjacent cells of $\Q_n$; \label{case1}
	\item $\phantom{-}\beta$ and $-n\alpha$ lie in the same cell of $\Q_n$; \label{case2}
	\item $-\beta$ and $-n\alpha$ lie in the same, adjacent, or twice adjacent cells of $\Q_n$. \label{case3}
	\end{enumerate} 
\end{lemma}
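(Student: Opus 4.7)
The plan is to combine Lemma~\ref{lem:lang-setup} with Lemma~\ref{lem:P-Q}. Lemma~\ref{lem:lang-setup} guarantees that if $X_\beta$ has a minimal forbidden word of length $n+1$, then some cell of $\P_n$ contains a point $s \in \mathcal{S}_0 = \{0, \beta\}$ together with a point $t$ of the form $-n\alpha$ or $\beta - n\alpha$, while Lemma~\ref{lem:P-Q} tells us that $\Q_n$ is a coarsening of $\P_n$, so any two points in the same cell of $\P_n$ automatically lie in the same cell of $\Q_n$. The whole argument then amounts to unwinding the four possibilities for the pair $(s,t)$ and, when $s$ or $t$ involves $\beta$, rotating the circle to strip the $\beta$-offset before invoking the coarsening.

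I would split into the four cases determined by the choice of $(s,t)$. The cases $(0, -n\alpha)$ and $(\beta, -n\alpha)$ require no work beyond Lemma~\ref{lem:P-Q}: the coarsening from $\P_n$ to $\Q_n$ immediately gives conclusions~\eqref{case1} and \eqref{case2}. For the cases $(0, \beta - n\alpha)$ and $(\beta, \beta - n\alpha)$, I would apply the circle rotation $x \mapsto x - \beta$. This rotation carries the endpoint set of $\P_n$, namely $\{-i\alpha,\ \beta - i\alpha : 1 \leq i \leq n-1\}$, onto $\{-i\alpha - \beta,\ -i\alpha : 1 \leq i \leq n-1\}$, which still contains every endpoint of $\Q_n$. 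The pairs $(0, \beta - n\alpha)$ and $(\beta, \beta - n\alpha)$ are sent to $(-\beta, -n\alpha)$ and $(0, -n\alpha)$, respectively, and since the image pair is separated by no endpoint of the rotated partition, it is a fortiori separated by no endpoint of $\Q_n$. These yield conclusions~\eqref{case3} and \eqref{case1}.

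The only subtle point, and what I expect to be the main obstacle, is the interpretation of ``same cell'' when a distinguished point among $\{0, \beta, -\beta, -n\alpha, \beta - n\alpha\}$ coincides with an endpoint of $\P_n$ or $\Q_n$. Since $\alpha$ is irrational, $0$ and $-n\alpha$ never sit on an endpoint of $\Q_n$, but $\beta$ or $-\beta$ can do so precisely when $\beta \equiv \pm i\alpha \pmod 1$ for some $1 \leq i \leq n-1$, and similarly $\beta - n\alpha$ can coincide with a $\P_n$-endpoint when $\beta \equiv (n-i)\alpha \pmod 1$. In these finitely many exceptional configurations, ``the cell containing'' such a point is ambiguous and must be interpreted as ``one of the two cells adjacent to it''. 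Propagating this relaxation once through the coarsening step and potentially once more through the rotation step is precisely what accounts for the ``adjacent or twice adjacent'' hedge in the statement; the core geometric argument itself gives the stronger ``same cell'' conclusion in the generic case.
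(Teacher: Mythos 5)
Your case analysis and rotation trick are correct, and they constitute a genuinely different route from the paper's. For the two cases involving $-n\alpha$ directly, both you and the paper simply invoke Lemma~\ref{lem:P-Q}. But for the cases $(0,\beta-n\alpha)$ and $(\beta,\beta-n\alpha)$, the paper does not rotate: it observes $d(\beta,\beta-n\alpha)=d(0,-n\alpha)$ and $d(0,\beta-n\alpha)=d(-\beta,-n\alpha)$, deduces that the relevant distance is at most the length of some cell of $\Q_n$, and then appeals to the Three Lengths Theorem (specifically, that any two consecutive cells of $\Q_n$ together have length at least the longest cell length) to conclude the two points can be separated by at most two boundary points of $\Q_n$. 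Your observation that translating $\P_n$ by $-\beta$ yields a partition with endpoint set $\{-i\alpha-\beta,\,-i\alpha\colon 1\leq i\leq n-1\}$, which still contains every endpoint $\{-i\alpha\colon 1\leq i\leq n-1\}$ of $\Q_n$ and hence still refines $\Q_n$, replaces that entire metric-plus-Three-Lengths paragraph with a one-line coarsening step and actually proves the strictly sharper statement that the relevant pair lies in the \emph{same} cell of $\Q_n$ in every alternative.

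The one thing to fix is your closing paragraph, which misattributes the ``adjacent or twice adjacent'' hedge. It is not there to absorb ambiguity when a distinguished point lands on a cell boundary: every partition here consists of left-closed right-open intervals, the translation $x\mapsto x-\beta$ preserves this orientation, and each point belongs to exactly one cell regardless of endpoint coincidences, so your argument has no exceptional configurations and nothing to ``propagate.'' The hedge is instead an artifact of the paper's own distance argument, which only bounds $d(0,-n\alpha)$ by a cell length rather than pinning down the cell. Since your rotation argument does pin down the cell, you should delete the speculative discussion and simply note that your proof establishes the stronger ``same cell'' conclusion; the stated form of the lemma then follows a fortiori.
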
 
\begin{proof} 
By Lemma~\ref{lem:lang-setup}, at least one element of $\{0,\beta\}$ must lie in the same cell of $\P_n$ as an element of $\{-n\alpha,\beta-n\alpha\}$.  If $-n\alpha$ lies in the same cell of $\P_n$ as an element of $\{0,\beta\}$, then one of~\eqref{case1} or~\eqref{case2} occurs since the cells of $\Q_n$ can be written as unions of cells in $\P_n$.  Otherwise, one of the following holds: 
	\begin{enumerate}
	\item $\beta$ and $\beta-n\alpha$ lie in the same cell of $\P_n$ (hence also the same cell of $\Q_n$); 
	\item $0$ and $\beta-n\alpha$ lie in the same cell of $\P_n$ (hence also 
the same cell of $\Q_n$). 
	\end{enumerate} 
Let $d$ denote the metric on $[0,1)$ inherited from the Euclidean metric on $\mathbb{R}/\Z$.  Note that $d(\beta,\beta-n\alpha)=d(0,-n\alpha)$, and so if $\beta$ and $\beta-n\alpha$ lie 
in the same cell of $\P_n$ then $d(0,-n\alpha)$ is at most the length, $L$, of that cell.  By the Three Lengths Theorem~\cite{sos}, for any $n>2$ the intervals comprising $\Q_n$ have at least two and at most three distinct lengths.  Moreover, for any $n$ where the intervals have three distinct lengths, the longest of the lengths is the sum of the shorter two and the sum of the lengths of any two consecutive cells is at least the longest length.  Therefore, since $d(0,-n\alpha)=L$ is at most the longest length of any interval in $\Q_n$, $0$ and $-n\alpha$ can be in the same cell of $\Q_n$, adjacent cells of $\Q_n$, or twice adjacent cells of $\Q_n$.  
In particular, if $\beta$ and $\beta-n\alpha$ lie in the same cell of $\P_n$ then~\eqref{case1} holds.  Finally, if $0$ and $\beta-n\alpha$ lie in 
the same cell of $\P_n$, then $d(-\beta,-n\alpha)=d(0,\beta-n\alpha)$ and similarly~\eqref{case3} holds.
\end{proof} 

The interest in Lemma~\ref{lem:lang-reduction} is that we have removed the dependence on $\P_n$ (a partition which depends on $\beta$) and replaced it with $\Q_n$ (a partition which depends only on $\alpha$). 

We recall some facts that follow from the Three Lengths Theorem of Sos~\cite{sos}.  First, if $n$ is such that $\Q_n$ has only two distinct lengths, then a new length is created in $\Q_{n+1}$ by subdividing one of the intervals from $\Q_n$ with the longest length.  Further, there is a simple 
formula for the number of intervals of each length in $\Q_n$: 
\begin{theorem}[{See for example~\cite[Theorem 2.6.1]{AS}}]
Let $\alpha\notin\mathbb{Q}$ and let $\alpha=[0,a_1,a_2,\dots]$ be its continued fraction expansion.  Let $p_k/q_k=[0,a_1,a_2,\dots,a_k]$ be its $k^{th}$ convergent.  Then the sequence $\{q_k\}_{k=1}^{\infty}$ 
is nondecreasing, tends to infinity, and for every integer $n\geq1$ there 
exists a unique $k$ such that there are numbers $1\leq m\leq a_{k+1}$ and 
$0\leq r<q_k$ satisfying $n=mq_k+q_{k-1}+r$.  The partition $\Q_n\vee \Q\vee R_\alpha^{-1}\Q$ has:
	\begin{enumerate} 
	\item $r+1$ intervals of length $\eta_{k-1}-m\eta_k$ (Type 1); 
	\item $n+1-q_k$ intervals of length $\eta_k$ (Type 2); 
	\item $q_k-r-1$ intervals of length $\eta_{k-1}-(m-1)\eta_k$ (Type 3), 
	\end{enumerate} 
where $\eta_k:=(-1)^k(q_k\alpha-p_k)$. 
\end{theorem}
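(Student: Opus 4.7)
The plan is to prove this Three Distance Theorem by induction on $n$, building the next partition from the current one by inserting the single new point $-n\alpha$ and tracking how it subdivides one of the existing cells. Before starting the induction I would establish the Ostrowski representation $n = mq_k + q_{k-1} + r$ with $1 \leq m \leq a_{k+1}$ and $0 \leq r < q_k$: since $\alpha \notin \mathbb{Q}$, the sequence $\{q_k\}$ is strictly increasing to infinity, and the recursion $q_{k+1} = a_{k+1}q_k + q_{k-1}$ implies the ranges $\{mq_k + q_{k-1} + r : 1 \leq m \leq a_{k+1},\ 0 \leq r < q_k\}$ form consecutive disjoint blocks $[q_k + q_{k-1},\, q_{k+1}+q_{k-1})$ that cover all sufficiently large integers, with the triple $(k, m, r)$ uniquely determined within each block.

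The heart of the induction is the geometric claim that each time a new point is added, it lies in the interior of one of the cells of the longest type (Type 3, of length $\eta_{k-1} - (m-1)\eta_k$) and subdivides it into one Type 1 piece (length $\eta_{k-1} - m\eta_k$) and one Type 2 piece (length $\eta_k$). To verify this I would use the identities $\|q_k\alpha\| = \eta_k$, which is immediate from $q_k\alpha - p_k = (-1)^k \eta_k$, together with $\|q_{k-1}\alpha + m q_k \alpha\| = \eta_{k-1} - m\eta_k$, which follows from the opposing signs of $q_{k-1}\alpha - p_{k-1}$ and $q_k\alpha - p_k$ and the identity $p_{k-1} q_k - p_k q_{k-1} = (-1)^k$. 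These identify the two endpoints of the subdivided cell as the previously existing partition points $-(n - q_k)\alpha$ and $-r\alpha$, sitting at circular distances $\eta_k$ and $\eta_{k-1} - m\eta_k$ respectively from $-n\alpha$, accounting for one cell lost from Type 3 and one each gained in Types 1 and 2, which in turn matches the update $(r+1,\, n+1-q_k,\, q_k-r-1) \mapsto (r+2,\, n+2-q_k,\, q_k-r-2)$ of the counts.

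Finally I would handle the transitional bookkeeping. As $r$ increments past $q_k - 1$, either $m$ advances within the same level $k$ and the Type 1 cells relabel as Type 3 cells, or $m = a_{k+1}$ and the index passes to level $k+1$ with $m = 1$, $r = 0$; in the latter case the recursion $\eta_{k-1} = a_{k+1}\eta_k + \eta_{k+1}$ ensures that the old Type 1 cells (length $\eta_{k+1}$) relabel as the new Type 2 cells, and the counts update consistently across the regime change. The main obstacle is verifying the geometric claim of the middle step, namely that the new point always lands in a Type 3 cell rather than a Type 1 or Type 2 cell; this requires tracking not just lengths but the full cyclic arrangement of $\{-j\alpha : 0 \leq j < n\}$ on the circle, and is most transparently handled via a renormalization argument identifying the current partition with a rescaled version of a lower-level partition, reducing the inductive step to a verification at the base of the continued fraction tower.
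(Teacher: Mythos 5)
The paper itself does not prove this theorem; it quotes it directly from Allouche--Shallit \cite[Theorem~2.6.1]{AS}. So there is no proof in the paper to compare against, and your proposal has to be judged on its own terms. Your framework is the standard inductive route to the Three Distance Theorem, and the bookkeeping you lay out is correct: the Ostrowski block decomposition of $n$, the identities $\lVert q_k\alpha\rVert=\eta_k$ and $\lVert(mq_k+q_{k-1})\alpha\rVert=\eta_{k-1}-m\eta_k$, the fact that the Type 3 length is the sum of the Type 1 and Type 2 lengths, the count update $(r+1,\,n+1-q_k,\,q_k-r-1)\mapsto(r+2,\,n+2-q_k,\,q_k-r-2)$, and the relabeling at the $r\to 0$ and $k\to k+1$ boundaries via $\eta_{k+1}=\eta_{k-1}-a_{k+1}\eta_k$ are all consistent with the stated formula. (One small imprecision: you say the new point always lands in a Type 3 cell; when $r=q_k-1$ there are none, and the new point lands in a Type 1 cell if $m<a_{k+1}$ and in a Type 2 cell if $m=a_{k+1}$ --- in both cases the cell that will become Type 3 at the next step. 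Your ``transitional bookkeeping'' paragraph effectively addresses this, but the headline claim should be phrased as ``lands in a cell of the longest current length.'')

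The genuine gap is the one you flag yourself: the inductive step requires that $-(n-q_k)\alpha$ and $-r\alpha$ are the two nearest neighbors of $-n\alpha$ among the already-placed points $\{-j\alpha:1\le j<n\}$, lying on opposite sides of it. Your length identities give the distances from $-n\alpha$ to those two specific points, but say nothing about whether some other $-j\alpha$ lies strictly between, or whether the two candidates bracket $-n\alpha$ rather than sit on the same side. That adjacency claim is the Three Distance Theorem; it is not a corollary of the length computations, and the induction does not close without it. You correctly point to a renormalization argument (rescaling by the first-return map of $R_\alpha$ to a fundamental interval, which conjugates the level-$k$ picture to a level-$0$ picture for the Gauss-shifted rotation) as the natural way to supply it, but that argument is not carried out. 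So the proposal is an accurate plan with a correctly identified, but unfilled, central step.
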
 
We note that the partition $\Q_n\vee \Q\vee R_\alpha^{-1}\Q$  in the statement of this theorem is the standard partition used in a continued fraction approximation. 

An immediate corollary is the following: 
\begin{corollary}\label{cor:classical-old} 
No interval in $\Q_{q_k+q_{k-1}}\vee \Q\vee R_\alpha^{-1}\Q$ 
is divided into more than $2$ subintervals in $\Q_{(a_{k+1}+1)q_k+q_{k-1}-1}\vee \Q\vee R_\alpha^{-1}\Q$.  In particular, the orbit segment 
$$ 
\{-n\alpha\colon q_k+q_{k-1}\leq n<(a_{k+1}+1)q_k+q_{k-1}\} 
$$ 
does not visit any cell in $\Q_{q_k+q_{k-1}}\vee \Q\vee R_\alpha^{-1}\Q$ more than $2$ times. 
\end{corollary}

Applying this result to our modified partition $\Q_n$, we obtain: 
\begin{corollary}\label{cor:classical} 
The orbit segment 
$$ 
\{-n\alpha\colon q_k+q_{k-1}\leq n<(a_{k+1}+1)q_k+q_{k-1}\} 
$$ 
does not visit any cell in the partition $\Q_{q_k+q_{k-1}}$ more than $4$ times. 
\end{corollary}

We are now ready to construct our language stable shift.  The basic idea is to fix an irrational $\alpha$ with its associated partition determined by its continued fraction convergents, and then use an increasing sequence of reals $\beta_j$ such that the codings of these reals stay close to the coding of $\alpha$ for long intervals.  Using Lemma~\ref{lem:lang-reduction} we can replace the coding of each $\beta_j$ with respect to its associated partition determined by its continued fraction convergents by that of $\alpha$, controlling the number of times orbits visit a particular cell using Corollary~\ref{cor:classical-old}.  
For the usual continued fraction expansion and associated partition, the orbit of $\alpha$ visits each cell at most twice, but our count in Corollary~\ref{cor:classical} differs from this standard result, as our partition $\Q_n$ does not include the cells determined by $\Q\vee R_\alpha^{-1}\Q$.  
However, the partition $\Q_n\vee\Q\vee R_\alpha^{-1}\Q$ only has three more cells than $\Q_n$, 
and so our construction carries through with visits of the orbit to any particular cell inflated by at most $3$. 

More precisely, we fix $\alpha\notin\mathbb Q$ with continued fraction expansion
$\alpha=[0,a_1,a_2,\dots]$ and convergents $p_k/q_k$.  
Choose $0<\beta_1<\alpha$.  
Suppose we have chosen real numbers $\beta_1<\beta_2<\dots<\beta_i<\alpha$ and integers $k_1<k_2<\dots<k_i$ such that for all $1\leq j\leq i$, we have 
	\begin{enumerate} 
	\item $(a_{k_j+1}+1)q_{k_j}>44j^2$; \label{condition1}
	\item the real numbers $\beta_j, \beta_{j+1}, \beta_{j+2},\dots,\beta_i$ all lie in the same cell of the partition 
	$\Q_{(a_{k_j+1}+1)q_{k_j}+q_{k_j-1}-1}$ that $\alpha$ lies in; \label{condition2}
	\item the real numbers $-\beta_j, -\beta_{j+1}, -\beta_{j+2},\dots,-\beta_i$ also  all lie in the same cell of the partition 
	$\Q_{(a_{k_j+1}+1)q_{k_j}+q_{k_j-1}-1}$ that  $\alpha$ lies in. \label{condition3}
	\end{enumerate} 
Then for any $q_k+q_{k-1}\leq n<(a_{k+1}+1)q_k+q_{k-1}$, we have that $\beta_j,\beta_{j+1},\dots,\beta_i$ 
all lie in the same cell of $\Q_n$ as each other, and similarly, 
we have that $-\beta_j,-\beta_{j+1},\dots,-\beta_i$ also all lie 
in the same cell of $\Q_n$ as each other.  
We now check when  new minimal forbidden words arise 
for $n$ in the interval  $[q_{k_{j}}+q_{k_{j}-1}, (a_{k_{j}}+1)q_{k_{j}}+q_{k_{j}-1}]$ for $j\in\{1, 2, \ldots, i\}$. 
By Lemma~\ref{lem:lang-reduction}, these words arise only when $-n\alpha$ visits one of $11$ cells from the partition $\Q_{q_{k_{j}}+q_{k_{j}-1}}$ and by 
Corollary~\ref{cor:classical} each can be visited at most $4$ times.  Thus for any $j_1,j_2\in\{1,2,\dots,i\}$ 
there are at most $44$ 
 values of $n$ in the interval $q_{k_{j_1}}+q_{k_{j_1}-1}\leq n<(a_{k_{j_1}}+1)q_{k_{j_1}}+q_{k_{j_1}-1}$ for which $X_{\beta_{j_2}}$ has a minimal forbidden word of length $n+1$.  But combining 
these results with conditions~\eqref{condition2} and~\eqref{condition3}, we also 
have  that for any $1\leq j_1\leq i$, the set of $n$ in the interval 
\begin{equation}
\label{eq:interval}
[q_{k_{j_1}}+q_{k_{j_1}-1},(a_{k_{j_1}}+1)q_{k_{j_1}}+q_{k_{j_1}-1}]
\end{equation}
for which there exists $1\leq j_2\leq i$ such that $X_{\beta_{j_2}}$ has a minimal forbidden word of length $n+1$ is at most $44j_1$.  By condition~\eqref{condition1} of the construction, 
the interval in~\eqref{eq:interval} has length at least $44j_1^2$ 
and so there must be a subinterval of length $j_1$ on which none of the shifts $X_{\beta_1},X_{\beta_2},\dots,X_{\beta_i}$ have any minimal forbidden words.  
Since the sequence $\{q_k\}_{k=1}^{\infty}$ is nondecreasing and tends to infinity, we can choose $k_{i+1}$ 
sufficiently large such that $(a_{k_{i+1}+1}+1)q_{k_{i+1}}>44(i+1)^2$ and choose $\beta_{i+1}\in(\beta_i,\alpha)$ such 
that $\beta_{i+1}$ lies in the same cell of $\Q_{(a_{k_{i+1}+1}+1)q_{k_{i+1}}+q_{k_{i+1}-1}-1}$ as $\alpha$, and 
$-\beta_{i+1}$ lies in the same cell of $\Q_{(a_{k_{i+1}+1}+1)q_{k_{i+1}}+q_{k_{i+1}-1}-1}$ as $-\alpha$.  Since the partitions $\Q_k$ refine each 
other as $k$ increases, it 
follows that for any $1\leq j_1\leq i+1$ that $\beta_{i+1}$ lies 
in the same cell of $\Q_{(a_{k_{j_1}+1}+1)q_{k_{j_1}}+q_{k_{j_1}-1}-1}$ that $\alpha$ lies in, and $-\beta_{i+1}$ 
lies in the same cell of $\Q_{(a_{k_{j_1}+1}+1)q_{k_{j_1}}+q_{k_{j_1}-1}-1}$ that  $-\alpha$ lies in.  By induction, we construct a sequence 
$$ 
0<\beta_1<\beta_2<\beta_3<\dots<\beta_i<\dots<\alpha 
$$ 
which satisfies conditions~\eqref{condition1},~\eqref{condition2}, and~\eqref{condition3} for all $i,j\geq 1$.  Therefore for any $j$ there is an interval of integers (between $q_{k_j}+q_{k_j-1}$ and $(a_{k_j+1}+1)q_{k_j}+q_{k_j-1}$) of length $j$ such that $X_{\beta_i}$ has no minimal forbidden words of any lengths in that interval, for any $i=1,2,3,\dots$  Consider 
the shift 
\begin{equation}
\label{def:Z}
Z:=\overline{\bigcup_{i=1}^{\infty}X_{\beta_i}}. 
\end{equation}
A word $w\in\{0,1\}^*$ is in the language of $Z$ if and only if there exists $i$ such that $w\in\mathcal{L}(X_{\beta_i})$.  It follows that $Z$ is 
language stable.  

Moreover we claim that 
\begin{equation}
\label{eq:ZminusX} 
Z\setminus\bigcup_{i=1}^{\infty}X_{\beta_i} 
\end{equation} 
is the shift $X_{\alpha}$.  We show this in two steps.  First, suppose $z\in Z$ but $z\notin X_{\beta_i}$ for any $i\geq 1$.  Then there is a sequence $\{m_i\}$ of integers tending to infinity, and 
points $x_i\in X_{\beta_{m_i}}$ such that $\lim_i x_i=z$.  In other words, for each fixed $N\geq 1$ we have $z_j=(x_i)_j$ for all $|j|\leq N$.  
Let $s_i\in S^1$ be a point whose coding with respect to the partition $\{[0,\beta_{m_i}),[\beta_{m_i},1)\}$ agrees with $(x_i)_j$ for all $|j|\leq N$.  Passing to a subsequence if necessary, we can assume there exists $y\in S^1$ such that $\lim s_i=y$.  Note that $R_{\alpha}^js_i$ only codes differently with respect to the partitions $\{[0,\beta_{m_i}),[\beta_{m_i},1)\}$ and $\{[0,\alpha),[\alpha,1)\}$ if $R_{\alpha}^js_i\in[\beta_{m_i},\alpha)$.  
Since $\lim_i s_i=y$ and $\lim_i \beta_{m_i}=\alpha$, for fixed $N$ we have that for all sufficiently large $i$, the rotation 
$R_{\alpha}^js_i$ codes in the same way with respect to both of these partitions for all $|j|\leq N$ unless $R_{\alpha}^jy=\alpha$ for some $|j|\leq N$.  Assuming first that  $R_{\alpha}^jy\neq\alpha$ for any $|j|\leq N$, note that $R_{\alpha}^js_i$ and $R_{\alpha}^jy$ code in the same way with respect to both $\{[0,\beta_{m_i}),[\beta_{m_i},1)\}$ and $\{[0,\alpha),[\alpha,1)\}$ for all 
$|j|\leq N$ and all sufficiently large $i$.  Therefore the coding of $R_{\alpha}^jy$ is $z_j$ for all $|j|\leq N$.  
Since this holds for all $N\geq 1$, it follows that $z\in X_{\alpha}$.  The other possibility is that $R_{\alpha}^jy=\alpha$ for some $|j|\leq N$.  In this case, note that for fixed $N$, for any sufficiently small $|\gamma|>0$ the points $R_{\alpha}^j(s_i+\gamma)$ and $R_{\alpha}^js_i$ code in the same way with respect to $\{[0,\beta_{m_i}),[\beta_{m_i},1)\}$ for all $|j|\leq N$ and all sufficiently large $i$ (since $\lim s_i=y$).  Thus 
we can reduce to the previous case and again it follows that $z\in X_{\alpha}$.  

By the minimality of $X_{\alpha}$, it follows that either the system in~\eqref{eq:ZminusX}  is either $X_\alpha$ or is empty. 
To show it is nonempty, fix $y\in S^1$ to be a point whose $R_{\alpha}$-orbit does not include $\alpha\in S^1$.  For 
any fixed $N\geq 1$, note that the rotation $R_{\alpha}^jy$ codes in the same way with respect to $\{[0,\beta_i),[\beta_i,1)\}$ and $\{[0,\alpha),[\alpha,1)\}$ for all $|j|\leq N$ and all sufficiently large $i$.  The coding of the $R_{\alpha}$-orbit of $y$ is an element of $X_{\beta_i}$ and therefore this defines a sequence of points in $\bigcup_i X_{\beta_i}$ whose limit is the coding of $y$ with respect to $\{[0,\alpha),[\alpha,1)\}$, meaning there is some element of $X_{\alpha}$ in $Z$.


Furthermore, note that in our construction, 
we can always choose that $$\beta_i\in\{n\alpha\pmod 1\colon n=1,2,\dots\},$$ 
and henceforth we insist on this.  Then since $[0,\beta_i)$ can be written as a union of intervals in $\bigvee_{i=0}^mR_{\alpha}^{-i}\{[0,\alpha),[\alpha,1)\}$ for sufficiently large $m$,  there is a block code $\varphi_i\colon X_{\alpha}\to X_{\beta_i}$.  
By Durand's Theorem~\cite[Corollary 12]{D}, 
such a block code is invertible and so $X_{\alpha}$ is topologically 
conjugate to $X_{\beta_i}$ for all $i$.  Since $X_{\alpha}$ is uniquely ergodic, 
it follows that $Z$ is the union of countably many uniquely ergodic, topologically conjugate subshifts.  In particular it has only countably many ergodic measures and they are all (measurably) isomorphic to each other. 


Summarizing this construction and using the properties shown in Lemma~\ref{lemma:def-X}, we have: 
\begin{corollary}\label{rem:zi}
Fix some irrational $\alpha\in (0,1)$.  
There exists an increasing sequence of reals $\{\beta_i\}$  with each $\beta_i\in\{n\alpha\pmod 1\colon n\geq 1\}$ 
such that $\lim_i\beta_i = \alpha$ and such that the system 
$$ 
Z:=\overline{\bigcup_{i=1}^{\infty} X_{\beta_i}} = Z_c\cup\bigcup_{j=1}^{\infty}Z_j, 
$$ 
where $Z_j:=X_{\beta_j}$  is the coding of the rotation $([0,1),R_{\alpha})$ with respect to the partition $\{[0,\beta_j),[\beta_j,1)\}$ for all $j=1,2,\dots$ and $Z_c:=X_{\alpha}$ is the coding with respect to $\{[0,\alpha),[\alpha,1)\}$, 
is a language stable subshift.  
Moreover, if $X$ is the system defined in 
 Lemma~\ref{lemma:def-X}, then $X\times Z_j$ is minimal for all $j\in\N$ and $X\times Z_c$ is minimal.
Furthermore, the system 
$$ 
Z\setminus\bigcup_{i=1}^{\infty} X_{\beta_i} = Z\setminus\bigcup_{j=1}^\infty Z_j
$$ 
is the Sturmian shift $X_{\alpha} = Z_c$. 
\end{corollary}

\subsection{The example and its properties}
\label{sec:example-summary}
The example is the shift $X\times Y\times Z$. 

\subsubsection{Language stability and the existence of a characteristic measure of maximal entropy}
We first check that our example is language stable and deduce that it has a characteristic measure.

\begin{lemma}
The shift $(W, \sigma)$, where $W = X\times Y\times Z$ and $X$ is defined as in Lemma~\ref{lemma:def-X}, $Y$ is the 
full $2$-shift, and $Z$ is defined in~\eqref{def:Z}, 
is language stable. 
\end{lemma}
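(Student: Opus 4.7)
The plan is to reduce the language stability of $W$ to a combinatorial characterization of the minimal forbidden words of a product shift, and then to use the alignment of the runs of no-minimal-forbidden-words of $X$ and $Z$ that was arranged at the time $X$ was constructed.

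First I would establish that for every $n\geq 1$,
\[
\CM_n(W)=\emptyset \iff \CM_n(X)=\emptyset \text{ and } \CM_n(Y)=\emptyset \text{ and } \CM_n(Z)=\emptyset.
\]
The alphabet of $W$ is the Cartesian product of the alphabets of $X$, $Y$, and $Z$, so every length-$n$ word over this alphabet decomposes as a triple $(w_X,w_Y,w_Z)$ of length-$n$ words in the individual alphabets. Such a triple lies in $\mathcal{L}_n(W)$ iff each component lies in the language of its factor, so it is forbidden in $W$ iff at least one component is forbidden. If the triple is \emph{minimal} forbidden, then every forbidden component must already be a minimal forbidden word of length exactly $n$ in its own shift: otherwise, the triple subword at the position of an interior minimal forbidden word in that component would be a proper forbidden subword of $(w_X,w_Y,w_Z)$, contradicting minimality. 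Conversely, any $w_X\in\CM_n(X)$ paired with arbitrary $w_Y\in\mathcal{L}_n(Y)$ and $w_Z\in\mathcal{L}_n(Z)$ produces an element of $\CM_n(W)$, and analogously with the roles of $X,Y,Z$ permuted; such $w_Y,w_Z$ always exist because $Y$ and $Z$ are nonempty subshifts with nonempty language at every length.

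Since $Y$ is the full $2$-shift, $\CM_n(Y)=\emptyset$ for every $n\geq 1$, and so the equivalence above reduces to
\[
\{n\in\N\colon \CM_n(W)=\emptyset\} = \{n\in\N\colon \CM_n(X)=\emptyset\}\cap\{n\in\N\colon \CM_n(Z)=\emptyset\}.
\]
The construction of $Z$ in Section~\ref{sec:Z} produces, for each $j\in\N$, an interval of $j$ consecutive integers lying inside $[q_{k_j}+q_{k_j-1},(a_{k_j+1}+1)q_{k_j}+q_{k_j-1})$ on which $Z$ has no minimal forbidden words. Feeding these intervals into Lemma~\ref{lemma:def-X} as the sequence $\{i_n,i_n+1,\dots,i_n+j_n\}_{n\in\N}$, the shift $X$ was built so that $X_{i_n}=X_{i_n+j_n}$ for infinitely many $n$, which is equivalent to $X$ having no minimal forbidden words of any length in $\{i_n+1,\dots,i_n+j_n\}$ for those $n$.

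Combining the two observations, $\{n\in\N\colon \CM_n(W)=\emptyset\}$ contains arbitrarily long runs of consecutive integers and therefore has upper Banach density $1$, so $W$ is language stable. The main obstacle is the combinatorial characterization of $\CM_n(W)$ in the first step; the alignment in the last two steps is precisely the reason that $Z$ is constructed first and $X$ second, using the flexibility afforded by Lemma~\ref{lemma:def-X}.
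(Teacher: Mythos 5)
Your proposal is correct and takes essentially the same route as the paper: you reduce $\CM_n(W)=\emptyset$ to the emptiness of $\CM_n$ for each factor, discard $Y$ since the full shift has no forbidden words, and then use the fact that the interval sequence produced by the construction of $Z$ can be fed into Lemma~\ref{lemma:def-X} so that $X$ and $Z$ share arbitrarily long runs without minimal forbidden words. The only (harmless) difference is that you spell out the full equivalence $\CM_n(W)=\emptyset \iff \CM_n(X)=\CM_n(Y)=\CM_n(Z)=\emptyset$, whereas the paper only needs (and implicitly uses) the forward implication.
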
 
\begin{proof}
For any $n\in\N$,  we have that $\mathcal{L}_n(X\times Y\times Z)=\mathcal{L}_n(X)\times\mathcal{L}_n(Y)\times\mathcal{L}_n(Z)$.  Therefore $(w_X,w_Y,w_Z)\in\{0,1\}^n\times\{0,1\}^n\times\{0,1\}^n$ is forbidden if and 
only if at least one of the component words is forbidden, meaning that at 
least one of the following holds: $w_X$ is forbidden in $X$, $w_Y$ is forbidden in $Y$, or $w_Z$ is forbidden in $Z$.  Similarly, the word $(w_X,w_Y,w_Z)$ is minimal and forbidden if and only if at least one of $w_X$, $w_Y$, and $w_Z$ is a
forbidden word in its respective shift and none of the $w_X$, $w_Y$, and $w_Z$ is forbidden and not a minimal forbidden word.   
In particular, if each of $X$, $Y$, and $Z$ have no minimal forbidden words of length $n$, 
then $W$ also has no minimal forbidden words of length $n$.  
We claim that there are arbitrarily long intervals of the form $\{N,N+1,N+2,\dots,N+k-1\}$ for which $W=X\times Y\times Z$ has no minimal forbidden words of any length in the interval. 

Since $Y$ is the full $2$-shift, it does not introduce any forbidden words. 
Let 
$\{Z_n\}_{n=1}^{\infty}$ be the SFT cover of the shift $Z$ defined by~\eqref{def:Z} and let 
$$ 
\{i_1,i_1+1,\dots,i_1+j_1\},\{i_2,i_2+1,\dots,i_2+j_2\},\dots,\{i_n,i_n+1,\dots,i_n+j_n\},\dots 
$$ 
be a sequence of intervals of consecutive integers for which the sequence 
$(j_n)_{n\in\N}$ is strictly increasing and such that $Z_{i_n}=Z_{i_n+j_n}$ for all $n$ (this is possible since $Z$ is language stable).  By Lemma~\ref{lemma:def-X} we can construct $X$ such that for infinitely many $n$ we also have $X_{i_n}=X_{i_n+j_n}$.  Therefore there are infinitely many $n$ such that none of $X$, $Y$, or $Z$ has a minimal forbidden word of any length in the interval $\{i_n,i_n+1,\dots,i_n+j_n\}$, and so $X\times Y\times Z$ also has no minimal forbidden word in any such interval.  Since the sequence $(j_n)_{n\in\N}$ is strictly increasing, $W:=X\times 
Y\times Z$ is language stable. 
\end{proof} 

Combining this with Corollary~\ref{cor:language-stable}, we have that the 
existence of a characteristic measure on this system:
\begin{corollary}
The system $(W, \sigma)$ has a characteristic measure.  Moreover it has a 
characteristic measure that is a measure of maximal entropy.
\end{corollary}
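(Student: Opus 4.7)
The plan is to observe that the corollary is an immediate application of our main theorem, packaged through Corollary~\ref{cor:language-stable}. The preceding lemma has already established that $W = X \times Y \times Z$ is language stable, which is the only hypothesis we need.

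More concretely, I would write: since the preceding lemma shows $(W,\sigma)$ is language stable, Corollary~\ref{cor:language-stable} applies directly and produces a characteristic measure on $W$ that is a measure of maximal entropy. Both conclusions (existence of a characteristic measure, and existence of one of maximal entropy) follow from the single application, so no additional argument is required.

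The only thing worth emphasizing in the write-up is that this is precisely the point of the construction: we built $W$ as a product of three shifts with very different features (the language stable minimal shift $X$ from Lemma~\ref{lemma:def-X}, the full $2$-shift $Y$, and the shift $Z$ from~\eqref{def:Z}) specifically so that we would obtain language stability of the product, while simultaneously ruling out the first three known criteria for admitting a characteristic measure (nonamenable automorphism group, positive entropy on every subsystem, and infinitely many measurably isomorphic invariant measures). There is no mathematical obstacle to the proof itself; the substance lies entirely in the construction carried out in Sections~\ref{sec:X}, \ref{sec:Y}, and \ref{sec:Z} and in the verification of language stability in the previous lemma.

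\begin{proof}
By the preceding lemma, $(W,\sigma)$ is language stable. The conclusion therefore follows at once from Corollary~\ref{cor:language-stable}.
\end{proof}
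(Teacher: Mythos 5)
Your proof is correct and matches the paper exactly: the paper also deduces the corollary by combining the preceding lemma (establishing that $W$ is language stable) with Corollary~\ref{cor:language-stable}. No further argument is needed, and your surrounding commentary accurately reflects the role of the construction.
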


We next check that the existence of this characteristic measure for the system  $(W,\sigma)$ does not follow from 
results already previously in the literature. 

\subsubsection{Showing that Lemma~\ref{lem:measurable} does not apply to this system}
\begin{lemma} 
If $\mu$ is any characteristic measure on $(W, \sigma)$, then the set 
$$ 
\{\nu\in\mathcal{M}(X)\colon(X,\sigma,\nu)\text{ is measurably isomorphic 
to }(X,\sigma,\mu)\} 
$$ 
is infinite.  In particular, Lemma~\ref{lem:measurable} does not apply to 
$W = X\times Y\times Z$.
\end{lemma}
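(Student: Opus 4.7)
The plan is to use the fact that $Z$ is a disjoint union of countably many pairwise topologically conjugate, closed, $\sigma$-invariant subshifts, and correspondingly $W$ decomposes into countably many pairwise topologically conjugate $\sigma$-invariant Borel pieces. For any characteristic $\mu$ on $W$, I will construct infinitely many measures isomorphic to $\mu$ (but not equal to it) by applying piecewise-defined shift-commuting Borel bijections of $W$ that swap two of these pieces via the topological conjugacies.

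More specifically, I would first record from Section~\ref{sec:Z} that $Z = X_\alpha \sqcup \bigsqcup_{i\geq 1} X_{\beta_i}$, where these minimal subshifts are pairwise disjoint and pairwise topologically conjugate via shift-commuting homeomorphisms $\phi_{ij}$. Setting $W_0:=X\times Y\times X_\alpha$ and $W_i:=X\times Y\times X_{\beta_i}$ for $i\geq 1$, the $W_i$ form a countable partition of $W$ into $\sigma$-invariant Borel sets, and any two of them are topologically conjugate via $(\mathrm{id},\mathrm{id},\phi_{ij})$. Writing $\mu_i:=\mu|_{W_i}$, we have $\mu=\sum_i\mu_i$ and $\sum_i \mu_i(W)=1$; fix some $i_0$ with $\mu_{i_0}(W)>0$.

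For each $k\ne i_0$, I would define a Borel bijection $\Xi_k\colon W\to W$ that acts as $(\mathrm{id},\mathrm{id},\phi_{i_0,k})$ on $W_{i_0}$, as $(\mathrm{id},\mathrm{id},\phi_{k,i_0})$ on $W_k$, and as the identity on $W\setminus(W_{i_0}\cup W_k)$. Since each $\phi_{ij}$ commutes with $\sigma$ and each $W_i$ is $\sigma$-invariant, $\Xi_k$ commutes with $\sigma$, so $\nu_k:=(\Xi_k)_*\mu$ is a $\sigma$-invariant Borel probability measure on $W$, and $\Xi_k$ itself is a measure-theoretic isomorphism of $(W,\sigma,\mu)$ with $(W,\sigma,\nu_k)$. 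A direct computation on the pieces gives
$$
\nu_k(W_k)=\mu(\Xi_k^{-1}W_k)=\mu(W_{i_0})=\mu_{i_0}(W),
$$
and, for $k'\ne k,i_0$, $\nu_{k'}(W_k)=\mu(W_k)=\mu_k(W)$. Hence $\nu_k=\nu_{k'}$ forces $\mu_k(W)=\mu_{i_0}(W)$, and since $\sum_i\mu_i(W)=1$ with $\mu_{i_0}(W)>0$, only finitely many indices can satisfy this; outside that finite exceptional set, the $\nu_k$ are pairwise distinct. This produces infinitely many pairwise distinct $\sigma$-invariant measures on $W$, each measurably isomorphic to $\mu$, so $\mathcal{I}(\mu)$ is infinite.

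The main obstacle is really just verifying the piecewise construction: the maps $\Xi_k$ are generally discontinuous because the $W_i$ (for $i\geq 1$) need not be open in $W$—so the $\Xi_k$ do not lie in $\Aut(W)$—but they remain Borel bijections, which is all that is required for measurable isomorphism. Beyond this, the argument is essentially the pigeonhole observation that a probability measure cannot have infinitely many pieces of the same positive mass.
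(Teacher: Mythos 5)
Your argument is correct, but it takes a genuinely different route from the paper's. The paper uses the hypothesis that $\mu$ is characteristic in an essential way: it projects $\mu$ to the $Y$ and $Z$ coordinates, observes that the marginal $\mu_Y$ is an $\Aut(Y)$-characteristic measure on the full $2$-shift and invokes the Boyle--Lind--Rudolph classification (Lemma~\ref{lem:full-shift}), then uses entropy and rotation-versus-periodic disjointness to conclude $\mu_{YZ}=\mu_Y\times\mu_Z$, and finally produces infinitely many isomorphic measures by permuting the (pairwise isomorphic) ergodic components of $\mu_Z$ and re-joining with $\mu_X$. You instead exploit the concrete structure of $Z$ established in Section~\ref{sec:Z}, namely $Z=X_\alpha\sqcup\bigsqcup_i X_{\beta_i}$ with all pieces pairwise topologically conjugate, and transport $\mu$ by piecewise shift-commuting Borel bijections that swap two of the invariant pieces $W_i=X\times Y\times X_{\beta_i}$; the pigeonhole step correctly shows all but finitely many of the resulting measures are pairwise distinct. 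Notably, your proof never uses that $\mu$ is characteristic, so it establishes the stronger statement (every $\sigma$-invariant measure on $W$ has infinite isomorphism class), which matches the claim in the introduction and makes the ``Lemma~\ref{lem:measurable} does not apply'' conclusion immediate, and it avoids Lemma~\ref{lem:full-shift} and the joining/disjointness analysis entirely. The one point you gloss over is the pairwise disjointness of the pieces: the paper gives $X_\alpha\cap\bigcup_iX_{\beta_i}=\emptyset$, and for $i\neq j$ the shifts $X_{\beta_i},X_{\beta_j}$ are distinct minimal subshifts (their unique invariant measures give the symbol $1$ the different frequencies $1-\beta_i\neq1-\beta_j$), hence disjoint; this is easy but should be said, since your $\Xi_k$ is only well defined once the $W_i$ genuinely partition $W$.
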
 
\begin{proof} 
Let $\mu$ be a characteristic measure on  $(W, \sigma)$.  Let $\mu_{YZ}$ be the marginal measure obtained by projecting $\mu$ 
onto $Y\times Z$, meaning that for any measurable $A\subseteq Y\times Z$ we have $\mu_{YZ}(A):=\mu(X\times A)$.  Note that $\mu_{YZ}$ is a shift 
invariant probability measure on $Y\times Z$. 
Next set  $\mu_Y$ to be the marginal of $\mu_{YZ}$ projected onto $Y$ and 
set $\mu_Z$ to 
be the marginal of $\mu_{YZ}$ projected onto $Z$; thus for measurable $B\subseteq Y$ and $C\subseteq Z$, 
we have $\mu_Y(B):=\mu_{YZ}(B\times Z)$ and 
$\mu_Z(C):=\mu_{YZ}(Y\times C)$.  Then $\mu_Y$ is an invariant measure on $Y$ and $\mu_Z$ is an invariant measure on $Z$.  

If $\varphi\in\Aut(Y)$ is an automorphism of $Y$ and if $B\subseteq Y$ is 
a measurable set, then (letting $\id$ denote the identity) we have 
\begin{eqnarray*} 
\mu_Y(\varphi^{-1}B)&=&\mu_{YZ}((\varphi\times\id)^{-1}(B\times Z)) \\ 
&=&\mu((\id\times\varphi\times\id)^{-1}(X\times B\times Z)) =\mu(X\times B\times Z) \\ 
&=&\mu_{YZ}(B\times Z) =\mu_Y(B), 
\end{eqnarray*} 
where the third equality holds because $\id\times\varphi\times\id\in\Aut(X\times Y\times Z)$ and $\mu$ is characteristic.  Therefore, $\mu_Y$ is an $\Aut(Y)$-characteristic measure on the full shift $Y$.  
Thus by Lemma~\ref{lem:full-shift}, we can decompose the measure $\mu_Y$ 
into a convex combination of the symmetric Bernoulli measure on $Y$ and atomic measures supported on unions of periodic orbits in $Y$, writing 
\begin{equation}
\label{eq:decomp-muY}
\mu_Y=c_0\mu_B+\sum_{i=1}^{\infty}c_i\mu_{p_i} 
\end{equation}
where $0\leq c_i\leq 1$ for all $i$, $\sum_{i=0}^{\infty}c_i=1$, $\mu_B$ is the symmetric Bernoulli measure on $Y$, $p_i$ is a collection of pairwise disjoint unions of periodic orbits, and $\mu_{p_i}$ is a characteristic measure supported on $p_i$.  

Furthermore, the measure $\mu_{YZ}$ is a joining of the measures $\mu_Y$ and $\mu_Z$.  
But $h_{\tp}(Z)=0$ and  $h(\mu_Z)=0$.  Therefore, the measure $\mu_B$ 
(in the decomposition~\eqref{eq:decomp-muY} of $\mu_Y$) is disjoint from $\mu_Z$.  Moreover, $\mu_Z$ is an invariant measure on $Z$ and so it is an at most countable convex combination of ergodic measures that are all isomorphic to the same irrational circle rotation.  
It follows that $\mu_Z$ is disjoint from every finite rotation, and in particular from $\mu_{p_i}$ for all $i$. 
Combining these two observations, it follows that $\mu_Y$ and $\mu_Z$ are 
disjoint, 
and so we have that $\mu_{YZ}=\mu_Y\times\mu_Z$.  

We write the decomposition of $\mu_Z$ as 
$$ 
\mu_Z=\sum_{i=1}^{\infty}d_i\mu_{Z,i}, 
$$ 
where $\mu_{Z,i}$ is an enumeration of the countably many ergodic measures supported on $Z$ with weights $0\leq d_i\leq 1$ satisfying $\sum_{i=1}^{\infty}d_i=1$.  For each integer $k\geq 0$, the measure $\mu_Z^k$ defined by 
$$ 
\mu_Z^k:=\sum_{i=1}^{\infty}d_{i+k}\mu_{Z,i} 
$$ 
is measurably isomorphic to $\mu_Z$, and the resulting 
measures $\{\mu_Z^k\}_{k=0}^{\infty}$ are pairwise distinct.  Therefore, for each $k \geq 0$, the measure $\mu_{YZ}=\mu_Y\times\mu_Z$ is measurably isomorphic to each of the pairwise distinct measures $\mu_Y\times\mu_Z^k$.  

Finally, set $\mu_X$ to be the marginal obtained by projecting $\mu$ onto 
$X$, 
and so for measurable $A\subseteq X$ we have $\mu_X(A):=\mu(A\times Y\times Z)$.  Then $\mu$ is a joining of $\mu_X$ with $\mu_{YZ}$.  Since $\mu_{YZ}$ is isomorphic to $\mu_Y\times\mu_Z^k$, there is a joining of $\mu_X$ 
with $\mu_Y\times\mu_Z^k$ that is isomorphic to $\mu$ (and distinct from it since it has a different marginal onto $Y\times Z$).  These joinings are pairwise distinct for each $k\geq 0$, so $\mu$ is measurably isomorphic to infinitely many measures supported on  $(W, \sigma)$. 

Note that if $\mu$ were the measure produced by Lemma~\ref{lem:measurable} then $\mu$ would only be measurably isomorphic to finitely many other measures on  $(W, \sigma)$.  Therefore $\mu$ does not result from applying 
Lemma~\ref{lem:measurable} to any measure on  $(W, \sigma)$.  Since $\mu$ 
is an arbitrary characteristic measure on $(W, \sigma)$, Lemma~\ref{lem:measurable} cannot be applied to this system. 
\end{proof} 

\subsubsection{Showing the Krylov-Bogolioubov Theorem does produce a characteristic measure on this system}
Next we check that the automorphism group of $(W, \sigma)$ is not amenable,  meaning that we can not apply
the Krylov-Bogolioubov Theorem to produce a characteristic measure for this system. 

\begin{lemma}
The automorphism group of $(W, \sigma)$ is not amenable (as a countable discrete group).
\end{lemma}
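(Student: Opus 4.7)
The plan is to exhibit a non-amenable subgroup of $\Aut(W)$ by pulling back automorphisms from the factor $Y$, which is the full $2$-shift. Since subgroups of amenable groups are amenable, it suffices to find a non-amenable subgroup.

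First, I would set up the embedding $\iota\colon\Aut(Y)\hookrightarrow\Aut(W)$ defined by $\iota(\varphi):=\id_X\times\varphi\times\id_Z$. Because $W$ is a subshift of $(\A_X\times\A_Y\times\A_Z)^{\Z}$ with the diagonal shift action and the product $X\times Y\times Z$ decomposition is respected coordinatewise, the map $\iota(\varphi)$ is a self-homeomorphism of $W$ commuting with $\sigma$, hence an element of $\Aut(W)$. The map $\iota$ is clearly an injective group homomorphism (distinct $\varphi$ produce distinct actions already on the $Y$-coordinate), so $\Aut(Y)$ sits inside $\Aut(W)$ as a subgroup.

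Next, I would invoke the classical fact, due to Hedlund~\cite{hedlund} and developed further by Boyle, Lind, and Rudolph~\cite{BLR}, that the automorphism group of any mixing SFT of positive entropy, and in particular the full $2$-shift $Y=\{0,1\}^{\Z}$, contains a copy of the free group $F_2$ on two generators. Since $F_2$ is non-amenable and amenability is inherited by subgroups, $\Aut(Y)$ itself is non-amenable, and therefore so is $\Aut(W)$ via the subgroup $\iota(\Aut(Y))$.

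The one point requiring a brief verification is simply that $\iota(\varphi)$ honestly maps $W$ to $W$; this is immediate from the language description $\mathcal{L}_n(W)=\mathcal{L}_n(X)\times\mathcal{L}_n(Y)\times\mathcal{L}_n(Z)$ used elsewhere in Section~\ref{sec:example-summary}, since applying a block code in the $Y$-coordinate alone cannot create a word forbidden in $X$ or $Z$. No deep structural obstacle arises; the argument is essentially a one-line embedding combined with a known non-amenability result for the full shift.
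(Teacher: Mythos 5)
Your proposal is correct and takes essentially the same approach as the paper: embed $\Aut(Y)$ into $\Aut(W)$ via $\varphi\mapsto\id\times\varphi\times\id$, observe that $\Aut(Y)$ is non-amenable (the paper cites~\cite{BLR}, you additionally note the free group fact), and conclude since subgroups of amenable groups are amenable.
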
 

\begin{proof} 
The automorphism group of $Y$ is nonamenable, since $Y$ is a full-shift on at least two symbols (see~\cite{BLR}).  For each $\varphi\in\Aut(Y)$, the map $\varphi\mapsto\id\times\varphi\times\id$ gives an embedding of $\Aut(Y)$ into $\Aut(W)$.  
Since any subgroup of an amenable, countable discrete group is also amenable, it follows that 
$\Aut(W)$ is also nonamenable.  
\end{proof} 

\subsubsection{Showing that $W$ has no zero entropy subsystems} 
We check that Frisch-Tamuz's Theorem~\cite{FT2} that zero entropy subshifts have characteristic measures cannot be used to find a characteristic measure on $W$.

\begin{lemma}
Every subsystem of $(W,\sigma)$ has positive entropy.
\end{lemma}
\begin{proof}
Since $X$ is minimal and has positive entropy, every subsystem of $W=X\times Y\times Z$ has topological entropy at least $h_{\tp}(X)>0$.
\end{proof}

\subsubsection{Showing that Lemma~\ref{lem:topological}  cannot be used to find a characteristic measure of maximal entropy on $W$}  Finally, we show that the characteristic measure of maximal entropy on $W$ that is guaranteed by language stability cannot be obtained by applying Lemma~\ref{lem:topological}.  We begin with some results characterizing the full entropy, proper subshifts of $W$.

\begin{lemma}\label{lem:new} 
Let $W^{\prime}\subseteq W$ be a closed subshift with topological entropy 
$h_{\tp}(W)$.  Then there exists a nonempty set $S\subseteq\N\cup\{c\}$ such that 
$$ 
X\times Y\times\left(\bigcup_{j\in S}Z_j\right)\subseteq W^{\prime}, 
$$ 
where $Z_j$ is defined as in Corollary~\ref{rem:zi}.  Moreover, $S$ can be chosen such that for any $j\notin S$ the shift  
$$ 
(X\times Y\times Z_j)\cap W^{\prime}
$$ 
has topological entropy strictly lower than $h_{\tp}(W^{\prime})$.  Finally, if $S$ is chosen in this way and is infinite, then $c\in S$.  
\end{lemma}

\begin{proof}
Let $W^{\prime}\subseteq W=X\times Y\times Z$.  We define 
\begin{eqnarray*} 
\proj_X(W^{\prime})&:=&\{x\in X\colon\text{there exist $y\in Y$ and $z\in Z$ such that }(x,y,z)\in W^{\prime}\}; \\ 
\proj_Y(W^{\prime})&:=&\{y\in Y\colon\text{there exist $x\in X$ and $z\in Z$ such that }(x,y,z)\in W^{\prime}\}; \\ 
\proj_Z(W^{\prime})&:=&\{z\in Z\colon\text{there exist $x\in X$ and $y\in Y$ such that }(x,y,z)\in W^{\prime}\}. 
\end{eqnarray*} 
Note that $\proj_X(W^{\prime})\subset X$, $\proj_Y(W^{\prime})\subseteq Y$, and $\proj_Z(W^{\prime})\subseteq Z$ are each closed subshifts.  By minimality of $X$, $\proj_X(W^{\prime})=X$.  It is a classical result that any full shift, in particular $Y$, is entropy minimal, meaning every proper subshift of it has entropy strictly lower than $h_{\tp}(Y)$.  Since 
$$ 
h_{\tp}(W^{\prime})\leq h_{\tp}(\proj_X(W^{\prime}))+h_{\tp}(\proj_Y(W^{\prime}))+h_{\tp}(\proj_Z(W^{\prime})) 
$$ 
and $h_{\tp}(W^{\prime})=h_{\tp}(W)=h_{\tp}(X)+h_{\tp}(Y)+h_{\tp}(Z)$, we must have $\proj_Y(W^{\prime})=Y$.  Finally $Z$ is a countable union of minimal subshifts 
$$ 
Z=\bigcup_{j\in\N\cup\{c\}}Z_j 
$$ 
and so there exists a subset $S^{\prime}\subseteq\N\cup\{c\}$ such that 
$$ 
\proj_Z(W^{\prime})=\bigcup_{j\in S^{\prime}}Z_j. 
$$ 
Suppose $S^{\prime}$ is this subset.  

By construction (see Corollary~\ref{rem:zi}), $X\times Z_j$ is minimal for each $j\in S^{\prime}$.  Therefore 
$$ 
\proj_{X\times Z}(W^{\prime}):=\{(x,z)\in X\times Z\colon\text{there exists $y\in Y$ with }(x,y,z)\in W^{\prime}\}
$$ 
is the subshift $X\times\bigcup_{j\in S^{\prime}}Z_j$.  For each $j\in S$ 
define $Y_j\subseteq Y$ by 
$$ 
Y_j:=\{y\in Y\colon\text{there exists }(x,z)\in X\times Z_j\text{ such that }(x,y,z)\in W^{\prime}\}. 
$$ 
Since $\proj_Y(W^{\prime})=Y$, we have $Y=\bigcup_{j\in S^{\prime}}Y_j$.  By the Baire Category Theorem, $Y$ cannot be written as the union of 
a countable number of proper subshifts of itself, so there is a non-empty 
set $S\subseteq S^{\prime}$ such that $Y_j=Y$ for all $j\in S$.  For any such $j$, by a theorem in Furstenberg~\cite[Theorem II.2]{furstenberg}, the set $W^{\prime}\cap(X\times Y\times Z_j)=X\times Y\times Z_j$ (in Furstenberg's terminology, 
a Bernoulli flow, such as $Y$, is disjoint from a minimal flow,  such as $X\times Z_j$).  Therefore 
$$ 
X\times Y\times\left(\bigcup_{j\in S}Z_j\right)\subseteq W^{\prime}. 
$$ 
Moreover, for any $j\notin S$ we have $Y_j\neq Y$ and so $h_{\tp}(Y_j)<h_{\tp}(Y)$ and the topological entropy of $(X\times Y\times Z_j)\cap W^{\prime}$ is at most
$$h_{\tp}(X)+h_{\tp}(Y_j)+h_{\tp}(Z_j)<h_{\tp}(X)+h_{\tp}(Y)+h_{\tp}(Z)=h_{\tp}(W^{\prime}).$$

Finally we show that if $S$ is infinite, then $c\in S$.  Recall that in the proof of Corollary~\ref{cor:classical} we showed that $Z_c=Z\setminus\bigcup_{j=1}^{\infty}Z_j$.  The proof of this relies only on the fact 
that $Z$ is the closure of the union of the infinite collection of shifts $Z_j$.  If $S$ is infinite, then the same proof as in Corollary~\ref{cor:classical} shows that $Z_c=\overline{\bigcup_{j\in S\setminus\{c\}}Z_j}\setminus\bigcup_{j\in S\setminus\{c\}}Z_j$. Since 
$$ 
X\times Y\times\left(\bigcup_{j\in S}Z_j\right)\subseteq W^{\prime}
$$ 
and $W^{\prime}$ is closed, it follows that $c\in S$.
\end{proof}

We recall a result that follows quickly from Boyle, Lind, and Rudolph~\cite[Proposition 9.4]{BLR}.
\begin{lemma}\label{lem:blr}
Let $Y^{\prime}\subseteq Y$ be an infinite proper subshift.  Then $Y^{\prime}$ is topologically conjugate to infinitely many other subshifts of $Y$.
\end{lemma}
\begin{proof}
Taking $X_T:=Y$ in Proposition 9.4 of~\cite{BLR}, if $Y^{\prime\prime}$ 
is any mixing subshift with $Y^{\prime}\subseteq Y^{\prime\prime}\subseteq Y$, then $Y^{\prime\prime}$ lies 
in the $\Aut(Y)$-orbit closure of $Y^{\prime}$ in the Hausdorff metric (note that the use of this result 
requires that $Y^{\prime}$ is infinite).  So if $Y^{\prime}$ is contained in 
infinitely many mixing subshifts of $Y$, then the $\Aut(Y)$-orbit of $Y^{\prime}$ cannot be finite.  Let $\mathcal{F}$ be a minimal list of forbidden words defining $Y^{\prime}$.  Since $Y^{\prime}\neq Y$, we know that $\mathcal{F}\neq\emptyset$.  Let $w\in\mathcal{F}$.  For each $n\in\N$, the subshift $Y^{\prime\prime}_n$ whose only forbidden word is $w10^n1$ contains $Y^{\prime}$ and is contained in $Y$ and for any $u,v\in\mathcal{L}(Y^{\prime\prime}_n)$ we have $u0^{|w|+n+2+t}v\in\mathcal{L}(Y^{\prime\prime}_n)$ for any $t\geq1$.  In particular, $Y^{\prime\prime}_n$ is mixing.  Since $Y^{\prime\prime}_n\neq Y^{\prime\prime}_m$ for any $n\neq m$, the $\Aut(Y)$-orbit of $Y$ cannot be finite.  But every element of the $\Aut(Y)$-orbit of $Y^{\prime}$ is topologically conjugate to $Y^{\prime}$.
\end{proof}

\begin{proposition}\label{prop:no-top}
Let $W^{\prime}\subseteq W$ be a closed, proper subshift $W$ with entropy $h_{\tp}(W)$. 
 Then $W^{\prime}$ is topologically conjugate to infinitely many other subshifts of $W$.
\end{proposition}
\begin{proof}
By Lemma~\ref{lem:new} there is a nonempty set $S\subseteq\N\cup\{c\}$ such that 
$$ 
X\times Y\times\left(\bigcup_{j\in S}Z_j\right)\subseteq W^{\prime}
$$ 
and for every $j\notin S$, $(X\times Y\times Z_j)\cap W^{\prime}$ has entropy strictly lower than $h_{\tp}(W)$.  
Since $W^{\prime}\neq W$, we have  that $S\neq\N\cup\{c\}$.    
We consider two cases: when $S$ is infinite and when $S$ is finite.

If $S$ is infinite, then $c\in S$ by Lemma~\ref{lem:new}.  In this case, let 
$$ 
Y_j:=\{y\in Y\colon\text{there exist }(x,z)\in X\times Z_j\text{ such that }(x,y,z)\in W^{\prime}\}. 
$$ 
If $Y_j=Y$, 
then $(X\times Y\times Z_j)\cap W^{\prime}=X\times Y\times 
Z_j$ is a subshift of entropy $h_{\tp}(W)=h_{\tp}(W^{\prime})$ (recall that $Z=\bigcup_{j\in\N\cup\{c\}}Z_j$ has entropy $0$).  So $Y_j\neq Y$ 
for all $j\notin S$.  
Fix some $j\notin S$.  Since $c\in S$, it follows that $j\neq c$. 
By construction, $Z_j$ is the coding of $([0,1),R_{\alpha})$ with respect to the partition $\{[0,\beta_j),[\beta_j,1)\}$, 
where $\beta_j\in(0,1)$ is a fixed real number distinct from all other $\beta_k$ and the increasing sequence of $\beta_j$ satisfies 
$\lim_k\beta_k = \alpha$.  
Again by construction, $\beta_j$ is not an accumulation point of $\{\beta_k\colon k\neq j\}$,  
and so it follows from the unique ergodicity of the system  $Z_j$ 
that 
there exists $N\geq 1$ such that for all $n\geq N$ we have 
\begin{equation}
\label{eq:choose-N}
\mathcal{L}_n(Z_j)\cap\mathcal{L}_n(Z_k)=\emptyset \text{ for all } k\neq j \quad \text{ and } \quad \mathcal{L}_n(Z_j)\cap\mathcal{L}_n(Z_c)=\emptyset. 
\end{equation}

Note that for any $k\in S$, $X\times Y_j\times Z_j$ is topologically conjugate $X\times Y\times Z_k$, by Durand's Theorem~\cite{D}.  
Fixing $k\in S$ and using that $Z_j$ is topologically conjugate to $Z_k$, by the Curtis-Hedlund-Lyndon Theorem we can 
choose an invertible block code $\psi$ that implements this conjugacy. 
 Let $\psi^{-1}$ be its inverse block code.  
 Choose $n\geq 1$ sufficiently large 
 such that $\mathcal{L}_n(Z_j)$ and $\mathcal{L}_n(Z_k)$ are both disjoint from 
$\mathcal{L}_n(X_t)$ for all $t\notin\{j,k\}$.  
Let $\Psi$ be an invertible block code of range $\max\{\text{range}(\psi),\text{range}(\psi^{-1}),n,N\}$ that acts like $\psi$ on $Z_j$, acts like $\psi^{-1}$ on $Z_k$, and acts like the identity on $Z_t$ for $t\notin\{j,k\}$.  The image of $W^{\prime}$ under the block code $\id\times \id\times\Psi$ is topologically conjugate to $W^{\prime}$, and is a subshift of $W$.  Moreover for fixed $j\notin S$ we can do this construction for any of the infinitely many $k\in S\setminus\{c\}$ and obtain distinct images for distinct $k$: namely, when this construction is carried out with parameters $j$ and $k$, the  entropy of the resulting
subshift intersected with $X\times Y\times Z_t$ is larger than that of 
$W^{\prime}\cap(X\times Y\times Z_t)$ if and only if $t=j$, and has entropy smaller than that of  $W^{\prime}\cap(X\times Y\times Z_t)$ if and only if $t=k$.  
Thus $W^{\prime}$ is topologically conjugate to infinitely many other subshifts of $W$ 
when $S$ is infinite. 

If $S$ is finite but $S\neq\{c\}$, then $(\mathbb{N}\cup\{c\})\setminus S$ is infinite, 
and we can use the same construction, taking the same $j\in S\setminus\{c\}$ 
and using the infinitely many $k\in(\mathbb{N}\setminus S)$ to obtain infinitely many subshifts of $W$ that are topologically conjugate to $W^{\prime}$. 

Finally we consider the case when $S=\{c\}$.  If there exists $j\notin S$ such that $Y_j$ is infinite, then by Lemma~\ref{lem:blr} there are infinitely many other subshifts of $Y$ that are topologically conjugate to $Y_j$.  Let $\varphi$ be a block code implementing any such conjugacy.  
Choose $N$ as in~\eqref{eq:choose-N}.   Then $\id\times\varphi\times \id$ can be implemented by a block code of range at least $N$ and this block code can be extended to a block code that acts like the identity on $X\times Y_k\times Z_k$ for all $k\neq j$.  The image of $W^{\prime}$ under this subshift is topologically conjugate to $W^{\prime}$ and is a subshift of $W$.  Moreover for any two distinct subshifts topologically conjugate to $Y_j$, the 
resulting subshifts of $W^{\prime}$ are distinct, as the projections onto the middle coordinates are distinct.  In this case, $W^{\prime}$ is topologically conjugate to infinitely many subshifts of $W$.  
Thus it suffices considering the case when $Y_j$ is a finite shift for all $j\notin S$.  

If $|\{|Y_j|\colon j\notin S\}|>1$, then there exists $j\in(\mathbb{N}\setminus S)$ such that there are infinitely many $k\in(\mathbb{N}\setminus S)$ such that $|Y_j|\neq|Y_k|$. As before, we can find a block code that acts like the identity on $W^{\prime}\cap(X\times Y_t\times Z_t)$ for all $t\notin\{j,k\}$ 
and acts like $\id\times \id\times\Psi$ on $(X\times Y\times Z_j)\cup(X\times Y\times Z_k)$,  and $\psi$ is a topological conjugacy between $Z_j$ and $Z_k$.  As there are infinitely many choices for $k$, there are again infinitely many subshifts of $W$ topologically conjugate to $W^{\prime}$.    Thus the final case remaining 
is when $S=\{c\}$ and $|Y_i|=|Y_j|$ for all $i,j\in\mathbb{N}$.  In this case, fix $j\in\mathbb{N}$.  Note that since $Y_j$ is a finite, invertible system, it is the disjoint union of a finite number of cycles.  Therefore $Y_j\times Z_j$ decomposes into the disjoint union of a finite number of systems of the form $\{0,1,\dots,k-1\}\times Z_j$ where the map on $\{0,1,\dots,k-1\}$ is addition modulo $k$.  Applying Lemma~\ref{lem:low-entropy} to each of these components individually, there exists a subshift $T_j\subseteq\{0,1\}^{\Z}$ that is topologically conjugate to $Y_j\times Z_j$.  Let $\varphi_j\colon T_j\to Y_j\times Z_j$ be this conjugacy and let $\pi\colon Y_j\times Z_j\to Z_j$ denote projection onto the second coordinate.  Then the shift 
$$ 
U_j:=X\times\{(t,\pi(\varphi_j(t)))\colon t\in T_j\}\subseteq X\times Y\times Z_j 
$$ 
is topologically conjugate to $X\times Y_j\times Z_j$.  
Since $T_j$ is infinite, these shifts are distinct.  Now, as before, we can find a block code that acts like the identity on $W^{\prime}\cap(X\times Y_t\times Z_t)$ for all $t\neq j$ and acts like a conjugacy between $X\times Y_j\times Z_j$ and $U_j$ on $X\times Y_j\times Z_j$.  Since there are infinitely many choice for $j$, again $W^{\prime}$ is topologically conjugate to finitely many subshifts of $W$.
\end{proof}

\begin{proposition}
The characteristic measure of maximal entropy $\mu$ on $W$, guaranteed to exist by Corollary~\ref{cor:language-stable}, cannot be obtained by applying Lemma~\ref{lem:topological} to any proper subshift $W^{\prime}\subseteq W$. 
\end{proposition}
 In other words, this proposition shows that only way to obtain $\mu$ from Lemma~\ref{lem:topological} is to already know of a characteristic measure of maximal entropy on $W$.
 
\begin{proof}
For contradiction, suppose $\mu$ can be obtained by applying Lemma~\ref{lem:topological}  to some proper subshift $W^{\prime}\subseteq W$.  Then $\mu$ is a convex combination of a finite number of measures, each of which is supported on a subshift topologically conjugate to $W^{\prime}$.  Therefore $h_{\mu}(\sigma)\leq h_{\tp}(W^{\prime})$.
 But $\mu$ is a measure of maximal entropy on $W$ and so $h_{\tp}(W^{\prime})=h_{\tp}(W)$.  Since Lemma~\ref{lem:topological} only applies to a shift $W^{\prime}$ that is only topologically conjugate to a finite number of other subshifts of $W$, and since any proper subshift of $W$ with entropy $h_{\tp}(W)$ is topologically conjugate to infinitely many other subshifts of $W$ by Lemma~\ref{prop:no-top}, it follows that $W^{\prime}=W$.  But this is contradiction of our assumption that $W^{\prime}$ is a proper subshift of $W$.  
\end{proof}

\section{An application}
\label{sec:application}
\subsection{When does a block code define an automorphism?}
Suppose $X\subseteq\A^{\Z}$ is a subshift and $\varphi\colon\mathcal{L}_{2R+1}(X)\to\A$ is a block code.  A natural question is whether $\varphi$ defines an automorphism of $X$, and answering this question in general is 
challenging. 
We note, however, that if $\mu$ is a characteristic measure for $(X,\sigma)$, then $\mu$ gives rise to a family of necessary conditions that must be satisfied if $\varphi$ is an automorphism.  Making this precise, we have: 
\begin{lemma}\label{lem:application} 
Let $X\subseteq\A^{\Z}$ be a subshift and let $\varphi\colon\mathcal{L}_{2R+1}(X)\to\A$ be a range
$R$ block code with $\varphi(X)\subseteq X$.  For each $w\in\mathcal{L}(X)$ define  
$$ 
\varphi^{-1}(w):=\{v\in\mathcal{L}_{2R+|w|}(X)\colon\varphi(v)=w\}. 
$$ 
Suppose $\mu$ is a characteristic measure for $(X,\sigma)$.  If there exists $w\in\mathcal{L}(X)$ such that $\mu([w])\neq\mu([\varphi^{-1}(w)])$, then $\varphi$ does not define an automorphism of $X$. 
\end{lemma}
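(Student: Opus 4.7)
The plan is to prove the contrapositive: assuming $\varphi$ defines an automorphism of $X$, we show that $\mu([w])=\mu([\varphi^{-1}(w)])$ for every $w\in\mathcal{L}(X)$, where $[\varphi^{-1}(w)]$ is interpreted as the (disjoint) union $\bigcup_{v\in\varphi^{-1}(w)}[v]_0^+$. Since $\mu$ is characteristic and $\varphi\in\Aut(X)$ by hypothesis, we would have $\varphi_*\mu=\mu$, so $\mu([w]_0^+)=\mu(\varphi^{-1}([w]_0^+))$, and the task reduces to identifying the preimage cylinder $\varphi^{-1}([w]_0^+)$ in terms of the words in $\varphi^{-1}(w)$.

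First I would unwind the sliding block code definition to describe $\varphi^{-1}([w]_0^+)$. Since $\varphi$ has range $R$, the coordinates $(\varphi x)_0,\dots,(\varphi x)_{|w|-1}$ depend precisely on the finite window $(x_{-R},\dots,x_{|w|-1+R})$, a word of length $2R+|w|$. Hence $\varphi(x)\in[w]_0^+$ if and only if this window lies in $\varphi^{-1}(w)$, i.e.
$$
\varphi^{-1}([w]_0^+)=\sigma^{R}\bigl[\varphi^{-1}(w)\bigr]_0^+ = \sigma^{R}\!\!\bigsqcup_{v\in\varphi^{-1}(w)}[v]_0^+,
$$
where the union is disjoint because the words $v$ all have the common length $2R+|w|$.

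Next I would use the two invariances at our disposal. By $\sigma$-invariance of $\mu$,
$$
\mu\bigl(\varphi^{-1}([w]_0^+)\bigr)=\mu\bigl([\varphi^{-1}(w)]_0^+\bigr)=\sum_{v\in\varphi^{-1}(w)}\mu([v]_0^+)=:\mu([\varphi^{-1}(w)]).
$$
Combining this with $\mu([w]_0^+)=\mu(\varphi^{-1}([w]_0^+))$ (which follows from $\varphi_*\mu=\mu$) gives $\mu([w])=\mu([\varphi^{-1}(w)])$ for every $w\in\mathcal{L}(X)$, contradicting the hypothesis of the lemma. Thus $\varphi$ cannot be an automorphism.

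There is really no serious obstacle here; the only care required is in tracking indices so that the preimage of a cylinder truly becomes a shift of a cylinder defined by words of length $2R+|w|$, and in noting that the disjointness of the union over $v\in\varphi^{-1}(w)$ lets the measure pass through the union as a sum. The hypothesis $\varphi(X)\subseteq X$ plays no role in the necessity direction being proved; it is only there to make the statement $\varphi\in\Aut(X)$ meaningful at all, since the block code must at least carry $X$ into itself before one can ask about invertibility.
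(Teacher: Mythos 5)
Your proof is correct and supplies exactly the elementary argument the paper leaves implicit (the lemma is stated without proof there). The key identity $\varphi^{-1}([w]_0^+)=\sigma^R\bigsqcup_{v\in\varphi^{-1}(w)}[v]_0^+$, the disjointness of the union, and the combined use of $\sigma$-invariance and $\varphi_*\mu=\mu$ are all handled correctly, and proving the contrapositive is the natural route.
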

In other words, in shifts where one can estimate the measure of cylinder sets for a characteristic measure, one can quickly eliminate many block codes for consideration 
as automorphisms of the system.  Here we note that Lemma~\ref{lem:application} generalizes a theorem of Hedlund in the special case of full shifts~\cite[Theorem 5.4]{hedlund}.  We demonstrate how to use Lemma~\ref{lem:application}, in the case of the well-known Fibonacci shift. 

\subsection{Use of Lemma~\ref{lem:application} for the Fibonacci shift} The {\em Fibonacci shift} is the subshift of finite type whose only forbidden word is $11$; it is called the Fibonacci shift because the size of $\mathcal{L}_n(X)$ grows according to the Fibonacci sequence (note this should not be confused with the Fibonacci substitution system, also known as 
the Fibonacci Sturmian shift).  It is not hard to show that it is topologically mixing and so by Parry's theorem~\cite{parry}, it has a unique measure of maximal entropy $\mu$, called its {\em Parry measure}.  As 
already noted, $\mu$ is a characteristic measure for $X$ and so any automorphism on $X$ preserves $\mu$.  The Parry measure of a subshift of finite type can be written down explicitly and a computation following~\cite{parry} (see also~\cite[Section 4.4]{HK}) gives the measures: 
	\begin{align*} 
	\mu([00])&=\frac{b}{b+2}\approx0.44721; \\ \\ 
	\mu([10]) & =  \mu([01]) = \frac{1}{b+2}\approx0.27639; \\ \\ 
	\mu([0000])& =\mu([0010])=\mu([0100])=\frac{a}{b+2}\approx0.17082; 
\\ \\ 
	\mu([0001])& =\mu([0101])=\mu([1000])=\mu([1010])=\frac{a^2}{b+2}\approx0.10557; \\ \\ 
	\mu([1001])&=\frac{a^3}{b+2}\approx0.06525, 
	\end{align*} 
where $a:=2/(\sqrt{5}+1)$ and $b:=2/(\sqrt{5}-1)$.  
As a particular example, let $\varphi$ denote the block code with range $1$ given by:
	\begin{align*} 
	\varphi(000)& = \varphi(001)=\varphi(010)=\varphi(100)=0; \\ 
	\varphi(101)& =1. 
	\end{align*} 
To determine if $\varphi$ defines an automorphism of $X$, 
we first check that $\varphi(X)\subseteq X$ by checking that the image of 
any element of $\mathcal{L}_4(X)$ is an element of $\mathcal{L}_2(X)$ (this guarantees that the forbidden word $11$ does not occur in any element of $\varphi(X)$).  Then we check if the necessary conditions provided by Lemma~\ref{lem:application} are satisfied.  In this case, we conclude that $\varphi$ does {\em not} have an inverse block code because 
\begin{eqnarray*} 
\mu([\varphi^{-1}(00)])&=&\mu([0000]\cup[0001]\cup[0010]\cup[0100]\cup[1000]\cup[1001]) \\
&=&\mu([0000])+\mu([0001])+\mu([0010])+\mu([0100])+\mu([1000])+\mu([1001]) \\ 
&>&\mu([00]). 
\end{eqnarray*} 

Of course for subshifts that are not shifts of finite type, the problem of determining whether $\varphi(X)\subseteq X$ is more challenging.  Further, even in cases when it is known to exist, finding a characteristic measure and explicitly writing down the measure of small cylinder sets is significantly more difficult.  However, we mention that the characteristic measure we construct on language stable shifts is a weak* limit of Parry measures on shifts of finite type (at least in the case when the terms in 
the SFT cover are themselves topologically mixing) and this allows one to 
approximate the measure our characteristic measure would give to a (small) cylinder set by studying the measure it gets in the terms of the SFT cover.

\end{document}